\newtheorem{lem}{Lemma}
\theoremstyle{remark}
\newtheorem{remark}{Remark}
\def\1{1\!{\rm l}}
\newcommand{\leqa}{\lesssim}
\newcommand{\geqa}{\gtrsim}
\newcommand{\EM}{\ensuremath}
\newcommand{\al}{\alpha}
\newcommand{\be}{\beta}
\newcommand{\ga}{\gamma}
\newcommand{\Ga}{\Gamma}
\newcommand{\La}{\Lambda}
\newcommand{\ta}{\tau}
\newcommand{\veps}{\varepsilon}
\newcommand{\vphi}{\varphi}
\newcommand{\cA}{\EM{\mathcal{A}}}
\newcommand{\cB}{\EM{\mathcal{B}}}
\newcommand{\cC}{\EM{\mathcal{C}}}
\newcommand{\cD}{\EM{\mathcal{D}}}
\newcommand{\cE}{\EM{\mathcal{E}}}
\newcommand{\cF}{\EM{\mathcal{F}}}
\newcommand{\cH}{\EM{\mathcal{H}}}
\newcommand{\cJ}{\EM{\mathcal{J}}}
\newcommand{\cL}{\EM{\mathcal{L}}}
\newcommand{\cM}{\EM{\mathcal{M}}}
\newcommand{\cN}{\EM{\mathcal{N}}}
\newcommand{\cP}{\EM{\mathcal{P}}}
\newcommand{\cY}{\EM{\mathcal{Y}}}
\newcommand{\psg}{{\langle}}
\newcommand{\psd}{{\rangle}}
\definecolor{blendedblue}{rgb}{0.2,0.2,0.7}
\DeclareMathAlphabet{\mathpzc}{OT1}{pzc}{m}{it}
\newcommand{\noi}{\noindent}
\newcommand{\RR}{\mathbb{R}}
\newcommand{\N}{\mathbb{N}}
\newcommand{\given}{\,|\,}
\newcommand{\rn}{\sqrt{n}}
\newcommand{\bet}{\operatorname{Beta}}
\newcommand{\bel}{\operatorname{Be}}
\newcommand{\kl}{\operatorname{KL}}
\newcommand{\bi}{\begin{enumerate}[label=\roman*)]}
\newcommand{\ei}{\end{enumerate}}
\newcommand{\ba}{\begin{array}{rcl}}
\newcommand{\ea}{\end{array}}
\newcommand{\di}{\displaystyle}
\newcommand{\mockalph}[1]{}
\theoremstyle{plain}
\newtheorem{thm}{Theorem}
\begin{document}
\begin{frontmatter}
\title{Spike and slab {P}\'{o}lya tree posterior densities: adaptive inference}
\runtitle{Adaptive P\'olya trees}
\thankstext{T1}{Work partly supported by the grant ANR-17-CE40-0001-01
of the French National Research Agency ANR (project BASICS). Results in Sections 2, 3.1 and 3.2 correspond to Chapter 4 of R.M.'s PhD thesis, which was funded by Universit\'e Paris--Diderot.}

\begin{aug} 
\author{\fnms{Isma\"el} \snm{Castillo}
\ead[label=e1]{ismael.castillo@upmc.fr}}
\and
\author{\fnms{Romain} \snm{Mismer}
\ead[label=e2]{rmismer@free.fr}}

\address{Sorbonne Universit\'e and Universit\'e Paris--Diderot\\
Laboratoire Probabilit\'es, Statistique et et Mod\'elisation\\
4, place Jussieu\\
75005 Paris, France\\
\printead{e1,e2}}

\runauthor{I. Castillo and R. Mismer}
\runtitle{Spike and Slab P\'olya tree posterior distributions}

\affiliation{}

\end{aug}
 
\begin{abstract}
In the density estimation model, the question of adaptive inference using P\'olya tree--type prior distributions is considered. A class of prior densities having a tree structure, called spike--and--slab P\'olya trees, is introduced. For this class, 
two types of results are obtained: first, the Bayesian posterior distribution is shown to converge at the minimax rate for the supremum norm in an adaptive way, for any H\"older regularity of the true density between $0$ and $1$, thereby providing adaptive counterparts to the results for classical P\'olya trees in \cite{castillo2017}. 
Second, the question of uncertainty quantification is considered. An adaptive nonparametric Bernstein--von Mises theorem is derived. Next, it is shown that, under a self-similarity condition on the true density, certain credible sets from the posterior distribution are adaptive confidence bands, having prescribed coverage level and with a diameter shrinking at optimal rate in the minimax sense.
\end{abstract}

\begin{keyword}[class=MSC]
\kwd[Primary ]{62G20 }
\kwd[Secondary ]{62G07, 62G15}
\end{keyword}

\begin{keyword}
\kwd{Bayesian nonparametrics, P\'olya trees, supremum norm convergence, Bernstein--von Mises theorem, spike--and--slab priors, hierarchical Bayes}
\end{keyword}


\end{frontmatter}

\section{Introduction}

P\'olya trees (abbreviated as PTs in the sequel) are a class of random probability distributions, where mass is spread following a dyadic tree structure. PTs were introduced in the 1960's and have been since then frequently used in Bayesian nonparametric statistics  as a popular choice of  prior distribution, for inference on probability distributions and densities. We refer to \cite{gvbook17},  Chapter 3 (see also \cite{lavine}) for an overview on PTs and their link to P\'olya urns \cite{polya30}, which was established in \cite{mauldinetal92}, who also coined the term `P\'olya tree'.  Although as such PTs are routinely used as part of statistical algorithms, not much was known until recently on their mathematical properties.

This work is a continuation of the paper \cite{castillo2017}, where it is shown that, for well chosen parameters, P\'olya trees are able to model smooth functions and  induce posterior distributions with optimal convergence rates in the minimax sense for a range of H\"older regularities. Previously known results for PTs  only dealt with consistency instead of rates, and it was not clear how to chose the parameters of the Beta distributions appearing in the construction of the PT to achieve optimal convergence rates; \cite{castillo2017} shows how a proper tuning of the Beta parameters leads to an optimal supremum-norm contraction rate for the posterior distribution. However, this choice depends on the H\"older regularity of the true unknown density, which is typically unknown. One goal of the present paper is to show how the prior distribution can be modified to yield optimal {\em adaptive} rates, that is rates that automatically adapt to the unknown smoothness parameter.

While optimal convergence rates are obviously desirable, in practice one often wants to know how much `confidence' there is in a given estimate, and this can typically be achieved by reporting a confidence set, whose diameter should then ideally be as small as possible. This goal is often referred to as {\em uncertainty quantification} and achieving it has recently been the object of much activity, in particular through the use of Bayesian methods, see e.g. the discussion paper \cite{svv15} and references therein. 
One aspect of the question is to study the limiting shape of the posterior distribution,  for instance by deriving so-called Bernstein--von Mises (BvM) theorems. This has recently be shown to be doable in nonparametric settings including density estimation in \cite{castillo2013, castillo2014} and was investigated for P\'olya trees in the fixed regularity case in \cite{castillo2017}, where  a Bernstein-von Mises theorem as well as a Donsker-type theorem for the distribution function were derived. Very recently, a first adaptive nonparametric BvM was obtained by Ray in \cite{ray2017} for the Gaussian white noise model. Here we shall consider the question of deriving an adaptative BvM in the density estimation model using P\'olya tree priors. Going further, we derive honest and adaptive confidence bands for the true unknown density under an (essentially unavoidable) self-similarity condition. 
 
\subsection{Density estimation model}

Suppose one observes a sample $X^{(n)}=(X_1,\cdots,X_n)$ of independent identically distributed (i.i.d.) variables of distribution $P_0=P_{f_0}$ admitting a density $f_0$ with respect to Lebesgue measure on the unit interval $[0,1]$.

The goal is to make inference on the unknown $f_0$ both in terms of estimation and confidence regions in a sense to be made precise below. To do so, we follow a Bayesian approach: to build the estimator, one considers the Bayesian model
\begin{align}
X_1,\ldots,X_n \given f \ & \sim\ P_f^{\otimes n}  \label{bayesdensity}\\
f \qquad& \sim \ \Pi,
\end{align}
where $\Pi$ is a distribution on densities on $[0,1]$ called {\em prior} distribution, and that below is chosen to be the law induced by certain random dyadic histograms built following a tree structure. The previous display specifies a Bayesian joint distribution of $(X^{(n)},f)$ from which one can deduce the {\em posterior distribution}, which is the conditional distribution $f\given X^{(n)}$ and will be denoted $\Pi[\cdot\given X]$ for short. This (generalised) estimator is a data-dependent distribution, which we study following a so-called frequentist analysis: that is, in the sequel we study the behaviour of $\Pi[\cdot\given X]$ in distribution under $P_{f_0}$, i.e. we assume that there exists a `true' unknown $f_0$ to be estimated, now being back to the assumption of the first paragraph of the section.

\subsection{Function spaces and wavelets}


\textit{Function classes.} Let $L^2 = L^2[0, 1]$ denote the space of square-integrable functions on $[0, 1]$ relative to Lebesgue measure equipped with the $\| \cdot \|_2$-norm. For $f, g \in L^2$, denote $\langle f, g \rangle := \langle f, g \rangle_2 = \int^1_0 fg$. Let $L^\infty = L^\infty [0, 1]$ denote the space of all measurable functions on $[0, 1]$ that are bounded up to a set of Lebesgue measure $0$, equipped with the (essential) supremum norm $\| \cdot \|_\infty$.
The class $\mathcal{C}^\alpha[0, 1]$, $\alpha \in (0, 1]$, of H\"older functions on $[0, 1]$ is
\[ \mathcal{C}^\alpha[0, 1] = \left\{g:[0,1]\to\RR:\quad \sup_{x\neq y} 
\frac{|g(x) - g(y)|}{|x - y|^\alpha}<\infty \right\}. \]
\textit{Haar basis.} The Haar wavelet basis is $\{\vphi, \psi_{lk},  l \geq 0, 0 \leq k < 2^l\}$, where $\vphi = \1_{[0,1]}$ and, for 
  $\psi = - \1_{(0,1/2]} + \1_{(1/2,1]}$ using the convention in \cite{hkpt}, and indices $l\ge 0$, $0 \le k < 2^l$,
 \[ \psi_{lk}(\cdot) = 2^{l/2} \psi(2^l \cdot - k). \]
In this paper our interest is in density functions, that is nonnegative functions $g$ with $\int^1_0 g \vphi = \int^1_0 g = 1$,
so that their first Haar-coefficient is always $1$. So, we will only need to consider the basis functions  $\psi_{lk}$ and simply write informally $(\psi_{lk})$ for the Haar basis. For any $g\in L^2[0,1]$, we denote by $g_{lk}=\psg g,\psi_{lk}\psd$ its Haar wavelet coefficients.

From the definition of the Haar basis it easily follows that a given function in $\mathcal{C}^\alpha[0,1]$  for $\alpha \in (0, 1]$ belongs to the H\"older--type ball, 
for some $R>0$, with $f_{lk}=\psg f,\psi_{lk}\psd$,
\begin{equation} \label{hoelderball}
\mathcal{H}(\al,R):=\{f=(f_{lk}):\ |f_{lk}|\leq R2^{-l(\al+1/2)},\quad \forall l\geq 0, 0\leq k < 2^l\}.
\end{equation}
We also consider the set of densities bounded from above by $m_1<\infty$ and below by $m_0>0$, 
\begin{equation} \label{fset}
 \cF(m_0,m_1)=\{f:\ \int_0^1 f=1,\quad m_0 \le f\le m_1\}.
\end{equation} 
For a given $\alpha > 0$, and $n \geq 1$, let us define 
\begin{equation}\label{minimax4}
\veps^*_{n,\alpha}=\left(\frac{\log n}{n}\right)^{\frac{\alpha}{2\alpha+1}}.
\end{equation}
This is the minimax rate, up to constants, for estimating a density function over a ball $\mathcal{H}(\al,R)$, when the supremum norm is considered as a loss, see \cite{khasminskiibragimov} and \cite{khasminskii}.\\

\noi {\em Notation.} In the sequel $C$ denotes a universal constant whose value only depends on other fixed quantities of the problem and may vary from line to line.\\

\noi {\em Outline.} In Section \ref{sec-pt}, we define tree--type prior distributions for the density estimation model, covering the classical P\'olya trees as particular case, as well as a new class of priors we introduce, called spike--and--slab P\'olya trees, and which is shown to be conjugate in this model. In Section \ref{sec-main}, our main results are derived: first, spike--and--slab P\'olya trees are shown to lead to adaptive minimax posterior rates in the supremum norm in Section \ref{sec-sn}. Next, we derive an adaptive BvM theorem in Section \ref{sec-bvm}. Combining the previous results, adaptive confidence bands are constructed in Section \ref{sec-cs}. A brief discussion of the results can be found in Section \ref{sec-disc}, while Section \ref{sec-proofs} contains the proofs of the main results. \\

\noi {\em Acknowledgements.} The authors would like to thank two referees as well as Thibault Randrianarisoa for insightful comments and suggestions on the paper.

\section{Spike and slab P\'{o}lya trees} \label{sec-pt}

\subsection{Tree prior distributions and densities}  \label{sec-tree}

Here we recall the construction of tree-type distributions following \cite{gvbook17}, Section 3.5, where more background and references can be found.

First let us introduce some notation relative to dyadic partitions. For any fixed indexes $l\geq 0$ and $0 \leq k < 2^l$, the dyadic number $r = k2^{-l}$ can be written in a unique way as $\veps(r) := \veps_1(r) \ldots \veps_l(r)$, its finite expression of length $l$ in base $1/2$ (note that it can end with one or more $0$'s). That is, $\veps_i \in \{0, 1\}$ and \[k2^{-l} = \di \sum_{i=1}^l \veps_i(r)2^{-i}.\]\\ Let $\mathcal{E} := \bigcup_{l \geq 0}\{0,1\}^l\cup \{\emptyset\}$ be the set of finite binary sequences. We write $|\veps| = l$ if $\veps \in  \{0,1\}^l$ and $|\emptyset| = 0$. For $\veps=\veps_1\veps_2\ldots\veps_{l-1}\veps_l$, we also use the notation $\veps'=\veps_1\veps_2\ldots\veps_{l-1}(1-\veps_l)$.

Let us introduce a sequence of partitions $\di \mathcal{I} = \{ (I_\veps)_{|\veps|=l}, l \geq 0\}$ of the unit interval. Here we  consider regular dyadic partitions: this is mostly for simplicity of presentation, and other partitions, based for instance on quantiles of a given distribution, could be considered as well. Set $I_\emptyset = (0, 1]$ and, for any $\veps \in \mathcal{E}$ such that $\veps= \veps(l;k)$  is the expression in base $1/2$ of $k2^{-l}$, set 
\[I_\veps := \left(\frac{k}{2^l},\frac{k+1}{2^l}\right]:=I_k^l. \]	
For any $l \geq 0$, the collection of all such dyadic intervals is a partition of $(0, 1]$.

Suppose we are given a collection of random variables $(Y_{\veps},\,\veps\in\cE)$ with values in $[0,1]$ such that 
\begin{align}
Y_{\veps1} & = 1 - Y_{\veps0},\qquad \forall\, \veps\in\cE, \label{conservative} \\
E[Y_\veps Y_{\veps0} Y_{\veps00}\cdots]&=0
\qquad\qquad\quad \forall\, \veps\in\cE,\ 
\end{align}
Let us then define a random probability measure on dyadic intervals by 
\begin{equation}\label{DEF}
P(I_\veps) = \prod_{j=1}^l Y_{\veps_1\ldots\veps_j}.
\end{equation}
By (a slight adaptation, as we work on $[0,1]$ here, of) Theorem 3.9 in \cite{gvbook17}, 
the measure $P$ defined above extends to a random probability measure on Borel sets of $[0,1]$ almost surely, that we call tree(--type) prior (see below for a justification of this terminology). 
 If $P$ turns out to have a.s. a density with respect to Lebesgue measure (which can be shown to be the case under some conditions on the variables $Y_{\veps}$, see e.g. Theorem 3.16 in \cite{gvbook17}), then the latter prior induces a prior on densities. Another option is to work on a truncated version of $P$, as we explain below.
  
\textit{Paths along the tree.} The distribution of mass in the construction \eqref{DEF}  can be visualised using a tree representation: 
to compute the random mass that $P$ assigns to the subset $I_\veps$ of $[0, 1]$, one follows a binary tree along the expression of $\veps$ : $\veps_1; \veps_1\veps_2,\ldots,\veps_1\veps_2\ldots\veps_l = \veps$. The mass $P(I_\veps)$ is the product of variables $Y_{\veps0}$ or $Y_{\veps1}$ depending on whether one goes `left' ($\veps_j = 0$) or `right' ($\veps_j = 1$) along the tree :\begin{equation}\label{defp}P(I_\veps) = \prod^l_{j=1, \veps_j=0} Y_{\veps_1,\ldots,\veps_{j-1}0} \times \prod^l_{j=1, \veps_j=1}(1 - Y_{\veps_1,\ldots,\veps_{j-1}0}).
\end{equation}
This can be represented graphically, see e.g. \cite{gvbook17}, Figure 3.3 or \cite{mr04}, Figure 4.1.   
A given $\veps = \veps_1, \ldots, \veps_l \in \mathcal{E}$ gives rise to a path $\veps_1\rightarrow \veps_1\veps_2 \rightarrow \veps_1\veps_2 \ldots \veps_l$. We denote $I^{[i]}_\veps := I_{\veps_1\ldots \veps_i}$, for any $i$ in $\{1,\ldots, l\}$. Similarly, denote $$Y^{[i]}_\veps =  Y_{\veps_1\ldots \veps_i}.$$

{\em Truncated tree priors.}  Rather that following the construction in \eqref{DEF} for arbitrary depths  $|\veps|=l$, in practice one may want to `stop' the construction at a large enough (typically $n$--dependent) maximal depth $L$, corresponding to the fact that with a given number of data points $n$, we do not expect information from the data to be present at too deep scales. To do so, we define $P(I_\veps)$ as in \eqref{DEF} for all $\veps$ with $|\veps|\le L$, for $L$ to be chosen, so that $P$ is specified on all dyadic intervals of diameter $2^{-L}$. 
  
There are multiple probability distributions on Borelians of $[0,1]$ that coincide on dyadic intervals $I_{\veps}$ with $P(I_\veps)$ resulting from this  truncated construction. We consider the specific one that is absolutely continuous relatively to the Lebesgue measure on $[0,1]$ with a constant density on each $I_\veps$, $|\veps|=L$. By doing so, if such a truncated tree distribution is taken as a prior on {\em densities}, the posterior distribution is also a distribution on densities, namely a random regular histogram on dyadic intervals of length $2^{-L}$. Specific examples of this construction are given below.

Note that in this construction we do not define $Y_\veps$ for $|\veps|> L$. The same object as above can also be obtained by defining  $Y_\veps$ variables all the way up to infinity by setting $Y_\veps=1/2$ for $|\veps|>L$: it can be checked that both constructions coincide.

\subsection{The case of classical P\'olya trees}

A random probability measure $P$ follows a P\'olya tree distribution $PT(\mathcal{A})$ with parameters $\mathcal{A} =
\{\alpha_\veps ; \veps \in \mathcal{E}\}$ on the sequence of partitions $\mathcal{I}$ if it is a tree prior distribution as defined in Section \ref{sec-tree} with variables $Y_{\veps}$ that, for  $\veps \in \mathcal{E}$, are mutually independent and follow a Beta distribution
\begin{equation}\label{priorbeta}
Y_{\veps0}\sim \text{Beta}(\alpha_{\veps0}, \alpha_{\veps1}).
\end{equation}
A standard assumption is that the parameters $\alpha_\veps$ only depend on the depth $|\veps|$, so that $\alpha_\veps = a_l$ for all $\veps$ with $|\veps|=l$,  any $l \geq 1$, and a sequence $(a_l)_{l\geq 1}$ of positive numbers. The class of P\'olya tree distributions is quite flexible: different behaviours of the sequence of parameters $(\alpha_\veps)_{\veps\in \mathcal{E}}$ give P\'olya trees with possibly very different properties. For instance assuming that $\sum_{l} a_l^{-1}$ converges gives absolutely continuous distributions, while $a_l=2^{-l}$ gives the Dirichlet process with uniform base measure, see \cite{gvbook17} Chapter 3 and \cite{castillo2017} for more details and references.  

As the P\'olya tree prior is characterised by the variables $Y_{\veps0}$, we denote it by $\cP(Y_{\veps0})$.  In case it is a prior  on densities (that is if  $\sum a_l^{-1}$ is finite as recalled above), this leads to the following Bayesian diagram
\[\ba
X|f &\sim& f\\
f &\sim& \cP(Y_{\veps0}),\qquad Y_{\veps0}\sim \text{ Beta}(\al_{\veps0},\al_{\veps1}).
\ea\]
As explained in the previous subsection, one may also consider a truncated version of the P\'olya tree at a certain depth $L$. Posterior convergence rate results for both untruncated and truncated versions are obtained in \cite{castillo2017}. 

\subsection{Spike and slab P\'{o}lya trees}


Let us define a cutoff $L_{max}=\log_2(n)$ and $L\le L_{max}$ to be the largest integer such that
\begin{equation}\label{ellmax}
2^{L} L^2 \leq n.
\end{equation} 


\noindent Let $\Pi$ be the prior on densities generated as follows. 
Consider the tree prior distribution truncated at level $L$ as in \eqref{ellmax} with {\em independent} variables $Y_{\veps0}$ given by
\begin{equation}\label{prior4}
 \varepsilon \in \mathcal{E} \text{ , } Y_{\veps0} \sim (1-\pi_{\veps0})\delta_{\frac{1}{2}}+\pi_{\veps0}\text{Beta}(\alpha_{\veps0},\alpha_{\veps1}),
\end{equation}
with parameters $\alpha_\veps \in \N$ and reals $\pi_\veps>0$  to be chosen, and setting $Y_{\veps1}=1-Y_{\veps0}$. In the sequel, we choose $\pi_\veps$ to depend only on the depth $|\veps|$. In slight abuse of notation, we write $\pi_l=\pi_{\veps}$ for the common value of all $\pi_\veps$ with $|\veps|=l$.   

\textit{Definition.} The truncated tree prior distribution with {\em independent} variables $Y_{\veps0}$ as defined in \eqref{prior4} above with parameters $\al_\veps$, $\pi_\veps$ and cut-off $L$ as in \eqref{ellmax}, is called {\em Spike and Slab P\'olya tree}. It is denoted $\Pi(\al_\veps,\pi_\veps)$, or simply $\cP_S(Y_{\veps0})$ when we want to emphasize the variables $Y_{\veps0}$ defining the tree prior. The name ``spike--and--slab" is chosen for the analogy with the spike--and--slab priors introduced in the literature of high--dimensional statistical models and variable selection, see \cite{mitbeau88}, \cite{georgemc93}, and also used in the context of nonparametrics \cite{js05}, \cite{hoffmann2015}, but with a prior rather featuring a Dirac mass at zero.

This prior is based on an idea of Ghosal and van der Vaart, which is referred as Evenly Split P\'olya tree in their book \cite{gvbook17}, Section 3.7.4. As the authors note, the evenly-split construction enables one to introduce many `even splits' (i.e. $Y_{\veps0}=Y_{\veps1}=1/2$); even all of them except a finite number a.s. if $\pi_{\veps}$ decrease to $0$ fast enough. Our construction slightly differs from theirs in that for simplicity we assume that the splits are always even for $l\ge L$, but the idea is similar. Although the construction in \cite{gvbook17} is mainly motivated from the large support property of the corresponding prior, and the finite number of jumps in the resulting histogram, the prior is also particularly interesting for its connection to wavelet thresholding as we explain now. Indeed,   
 note that using the definition of the Haar basis, the Haar coefficients $f_{lk}$ of a density $f$ can be expressed as (see \eqref{flk})
\begin{equation*}
f_{lk}=\langle f,\psi_{lk} \rangle = 2^{\frac{l}{2}}P(I_\veps)(1-2Y_{\veps0}).
\end{equation*} 
The Spike and Slab P\'olya tree  can therefore be seen as a `thresholding prior', with a thresholding taking place indirectly on the sequence of Haar coefficients of the function via the $Y$ variables, setting Haar coefficients to $0$ as soon as $Y_{\veps0}=1/2$. Therefore there is hope that the posterior distribution can `learn' from the data which coefficients are truly significant in the original unknown signal $f_0$. Our results in the next Section demonstrate that this is indeed the case.

While variables $Y_{\veps0}$ are Beta--distributed in the standard P\'olya tree, they follow a mixture in  the Spike and Slab P\'olya tree $\cP_S$ prior, which  leads to the Bayesian diagram
$$\ba
X|f &\sim& f\\
f &\sim& \cP_S(Y_{\veps0}),\quad \text{ with }\ Y_{\veps0}\sim (1-\pi_{\veps0})\delta_{\frac{1}{2}}+\pi_{\veps0}\text{ Beta}(\al_{\veps0},\al_{\veps1}).
\ea$$
The law $\cP_S$ can be interpreted as a hierarchical prior as follows: on each location $\veps0\equiv (l,k)$ of depth $|\veps0|=l\le L$, one first flips a coin $\gamma_{\veps0}\sim\text{Be}(\pi_{\veps0})$. Second, one sets $Y_{\veps0}\sim (1-\gamma_{\veps0})\delta_{\frac{1}{2}}+\gamma_{\veps0}\text{ Beta}(\al_{\veps0},\al_{\veps1})$. This provides extra flexibility with respect to the standard P\'olya construction, as one can set $Y_{\veps0}$ variables to $1/2$ (thus inducing extra `smoothing') with a certain probability. \\

\noi \textit{The posterior distribution.} 
For $\veps\in\cE$ and parameters $\al_{\veps}>0$, let us introduce the notation
\begin{align}
\label{NiX}
 N_X(I_\veps) & = \di \sum^n_{i=1} \1_{X_i \in I_\veps}, \\
  \alpha_{\veps}(X) & =N_X(I_{\veps})+\alpha_{\veps} \label{alix}.
\end{align} 
Recalling the definition of the Beta function $B(a,b)=(a-1)!(b-1)!/(a+b-1)!$ and $p!=\Ga(p-1)$ the usual factorial, let, 
 with $p_X=p(X,\veps)$, 
\begin{align}
 p_{\alpha_{\veps0}} & =B(\alpha_{\veps0},\alpha_{\veps1})^{-1},\\ 
 p_X & = B(N_X(I_{\veps0})+\alpha_{\veps0},N_X(I_{\veps1})+\alpha_{\veps1})^{-1}.
 \end{align}
 We also let
 \begin{align}
 T_X=&T(\veps,X)=2^{N_X(I_\veps)}\frac{p_{\alpha_{\veps0}}}{p_X} \label{tix}\\
 \tilde{\pi}_{\veps0}=&\frac{\pi_{\veps0}T_X}{(1-\pi_{\veps0})+\pi_{\veps0}T_X}.
 \label{pitilda}
 \end{align}
The following result, proved in Section \ref{sec-proofconj}, shows that the Spike and Slab P\'olya tree prior, as does the classical P\'olya tree prior, is conjugate in the density estimation model. \\
\begin{thm}\label{thmconj}
In the density estimation model, take as prior distribution on the data distribution $P$ a Spike and Slab P\'olya tree prior specified through variables
\[ Y_{\veps0}\sim (1-\pi_{\veps0})\delta_{\frac{1}{2}}+\pi_{\veps0}\bet(\al_{\veps0},\al_{\veps1}).\]
 Then the posterior distribution $P\given X_1,\ldots,X_n$ is again a Spike and Slab P\'olya tree prior which is specified through variables $\tilde{Y}_{\veps0}$  of distribution, 
for every $\varepsilon \in \mathcal{E}$ with $|\veps|\leq L$, 
\begin{equation} \label{posty}
 \tilde{Y}_{\veps0}  \sim (1-\tilde{\pi}_{\veps0})\delta_{\frac{1}{2}}+\tilde{\pi}_{\veps0}\bet(\alpha_{\veps0}(X),\alpha_{\veps1}(X)),
\end{equation}
with $\alpha_\veps(X)$ and $\tilde{\pi}_{\veps0}$ defined in \eqref{alix}--\eqref{tix}.
\end{thm}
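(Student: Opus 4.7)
The strategy is to reduce the claim to a coordinatewise mixture Bayes update, exploiting (i) the fact that under the truncation at level $L$ the density $f$ is a histogram, so its sample likelihood factorises along the tree, and (ii) the independence of the $Y_{\veps0}$'s in the Spike and Slab prior.

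First I would express the sample likelihood as a function of the $Y_{\veps0}$'s. Under the truncated construction, $f$ is constant equal to $2^L P(I_\veps)$ on each $I_\veps$ with $|\veps|=L$, so that $\prod_{i=1}^{n} f(X_i)=2^{nL}\prod_{|\veps|=L}P(I_\veps)^{N_X(I_\veps)}$. Substituting the product formula $P(I_\veps)=\prod_{j=1}^{L}Y^{[j]}_\veps$ and regrouping factors according to their level-$l$ ancestors yields the telescoping identity
\[
\prod_{|\veps|=L} P(I_\veps)^{N_X(I_\veps)} \;=\; \prod_{|\veps|<L} Y_{\veps0}^{N_X(I_{\veps0})}(1-Y_{\veps0})^{N_X(I_{\veps1})},
\]
using repeatedly $N_X(I_\veps)=N_X(I_{\veps0})+N_X(I_{\veps1})$ together with $Y_{\veps1}=1-Y_{\veps0}$.

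Next, since under the prior the $Y_{\veps0}$'s are independent and the likelihood above factorises accordingly, Bayes' rule yields a posterior that is a product over $\veps$ with $|\veps|<L$ of independent updates, each factor being the update of a single $Y_{\veps0}$ against the Bernoulli-type kernel $y^{N_X(I_{\veps0})}(1-y)^{N_X(I_{\veps1})}$. It therefore suffices to perform a single mixture Bayes step at a fixed node $\veps$.

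At a fixed node, the prior is $\mu_{\veps0}=(1-\pi_{\veps0})\delta_{1/2}+\pi_{\veps0}\bet(\al_{\veps0},\al_{\veps1})$ and the posterior reads $\tilde\mu_{\veps0}(dy)\propto y^{N_X(I_{\veps0})}(1-y)^{N_X(I_{\veps1})}\,\mu_{\veps0}(dy)$. The spike component contributes unnormalised mass $(1-\pi_{\veps0})\,2^{-N_X(I_\veps)}$ while remaining a Dirac at $1/2$; the slab component contributes, by standard Beta--binomial conjugacy, mass $\pi_{\veps0}B(\al_{\veps0}(X),\al_{\veps1}(X))/B(\al_{\veps0},\al_{\veps1})=\pi_{\veps0}\,p_{\al_{\veps0}}/p_X$ and is renormalised to $\bet(\al_{\veps0}(X),\al_{\veps1}(X))$. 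The slab-to-spike ratio of total masses is $\pi_{\veps0}T_X/(1-\pi_{\veps0})$ with $T_X$ as in \eqref{tix}, and after normalisation the slab weight is exactly $\tilde\pi_{\veps0}$ of \eqref{pitilda}. Combining with the product structure from the previous step delivers a Spike and Slab P\'olya tree posterior with the announced parameters. The only real bookkeeping difficulty lies in the telescoping step; once the likelihood is cast in product form, the rest is a one-dimensional mixture conjugacy computation.
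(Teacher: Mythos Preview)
Your proposal is correct and follows essentially the same approach as the paper: both factorise the histogram likelihood along the tree as $\prod_{|\veps|<L} Y_{\veps0}^{N_X(I_{\veps0})}(1-Y_{\veps0})^{N_X(I_{\veps1})}$ (up to the constant $2^{nL}$), combine it with the product prior, and then read off the node-wise spike--and--slab update, identifying the slab-to-spike mass ratio with $\pi_{\veps0}T_X/(1-\pi_{\veps0})$. Your explicit emphasis on the telescoping step and the one-dimensional mixture conjugacy is a clean way to organise the same computation the paper carries out on the full joint density.
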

Let us note that once the posterior on $\tilde{Y}_{\veps0}$s variables is determined as in \eqref{posty}, then by definition of the tree variables $\tilde{Y}_{\veps1}=1-\tilde{Y}_{\veps0}$ and in particular the marginal distribution of $\tilde{Y}_{\veps1}$ is given by
\[\tilde{Y}_{\veps1}  \sim (1-\tilde{\pi}_{\veps0})\delta_{\frac{1}{2}}+\tilde{\pi}_{\veps0}\bet(\alpha_{\veps1}(X),\alpha_{\veps0}(X)). \]
Note also that if $\pi_{\veps0}=1$ for any $\veps$, meaning that the prior is also a product of Beta variables, one recovers the standard conjugacy for the (truncated at $L$) usual P\'olya tree prior distribution.\\

{\em Spike and Slab P\'olya trees with flat initialisation.}  In the sequel it will be helpful to consider a subclass of Spike and Slab P\'olya tree priors that put zero probability to spikes for the first few layers of the tree, say up to depth $l\le l_0(n)$. That is, we say that such a prior has  flat initialisation up to level $l_0(n)$ if $\pi_{\veps0}=1$ for all $\veps$ with $|\veps|\le l_0(n)$. In the following, $l_0(n)$ will be taken to be a slowly diverging sequence, of much slower order than other cut-offs involved.

\section{Estimation and confidence sets with Spike and Slab P\'olya trees} \label{sec-main}

By definition, we take as prior as above the realisation of the Spike and Slab P\'olya tree $P$ that is absolutely continuous with respect to Lebesgue's measure with density equal to a histogram and histogram heights equal to $2^{|\veps|}P(I_\veps)$. The posterior is, by Theorem \ref{thmconj}, again a Spike and Slab P\'olya tree with density w.r.t. Lebesgue equal to a histogram and histogram heights equal to $2^{|\veps|}\tilde{P}(I_\veps)$. In particular, it induces a posterior on densities that we consider in the main results below.

\subsection{Adaptive supremum-norm convergence rate} \label{sec-sn}
The following result shows that the a posteriori law obtained with a Spike and Slab type P\'olya tree prior concentrates around the true density $f_0$ at minimax rate for the supremum-norm loss.
 
\begin{thm}\label{adapt}
For $\alpha\in (0,1]$ and $R, m_0, m_1>0$, suppose that $f_0$ belongs to $\cH(\al,R)\cap \cF(m_0, m_1)$ as in \eqref{hoelderball}--\eqref{fset}. Let $X_1,\ldots,X_n$ be i.i.d. random variables on $[0,1]$ following $P_{f_0}$. Let  $\Pi$ be the prior on densities induced by a Spike and Slab Polya Tree prior $\Pi(\alpha_\veps, \pi_\veps)$ with the choices, 
\begin{align*}
\alpha_\veps & = a,  \\
\pi_\veps & \, = \, e^{-\kappa l}/\left(\sum_{j=0}^L e^{-\kappa j}\right),\quad l=|\veps|,
\end{align*}
for $a\ge 1$ an integer and $\kappa\ge \kappa_0$, for $\kappa_0=\kappa_0(m_0, m_1)$ a large enough constant. Then for any $M$ large enough, in $P_{f_0}$-probability,
\[  \Pi\left[ \|f-f_0\|_\infty \le M\left(\frac{\log{n}}{n}\right)^{\frac{\alpha}{2\alpha+1}} \given X\right] \to 1. \]
\end{thm}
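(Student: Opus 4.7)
The plan is to bound the posterior mass of $\{\|f-f_0\|_\infty>M\veps_n^*\}$ by leveraging the Haar expansion and the explicit form of the posterior given in Theorem~\ref{thmconj}. Because $f$ drawn from $\Pi$ is a histogram at depth $L$, I first split
\[ \|f-f_0\|_\infty\le \|f-K_Lf_0\|_\infty+\|K_Lf_0-f_0\|_\infty,\]
where $K_Lf_0$ is the $L^2$-projection of $f_0$ onto the space of dyadic histograms at depth $L$. The H\"older condition immediately gives $\|K_Lf_0-f_0\|_\infty\lesssim 2^{-L\al}$, which is negligible compared to $\veps_n^*$ by the choice \eqref{ellmax}. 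For the first term, since $f-K_Lf_0=\sum_{l<L}\sum_k(f_{lk}-f_{0,lk})\psi_{lk}$ and at most one $\psi_{lk}$ at each level is supported at a given point,
\[ \|f-K_Lf_0\|_\infty\le \sum_{l=0}^{L-1}2^{l/2}\max_{0\le k<2^l}|f_{lk}-f_{0,lk}|.\]
The entire analysis reduces to controlling, level by level, this maximum under the posterior.

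The crucial step is to introduce the thresholding level $L_n^*=L_n^*(\al)$ defined by $2^{L_n^*}\sim(n/\log n)^{1/(2\al+1)}$, so that $\veps_n^*\sim 2^{L_n^*/2}\sqrt{\log n/n}\sim 2^{-L_n^*\al}$. The proof then treats the two regimes $l\le L_n^*$ and $L_n^*<l<L$ separately.

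For \emph{low levels} $l\le L_n^*$, after combining with the flat-initialisation device (so that $\tilde\pi_{\veps0}=1$ for $|\veps|\le l_0(n)$ trivially), I would show that, with high posterior and frequentist probability, $\tilde\pi_{\veps0}$ as defined in~\eqref{pitilda} is very close to $1$, so that the posterior on $Y_{\veps0}$ is essentially $\bet(\al_{\veps0}(X),\al_{\veps1}(X))$. Standard Beta-deviation bounds around the posterior mean $\al_{\veps0}(X)/(\al_{\veps0}(X)+\al_{\veps1}(X))$, together with empirical-Bernstein-type concentration of $N_X(I_{\veps0})/N_X(I_\veps)$ around $P_{f_0}(I_{\veps0})/P_{f_0}(I_\veps)$ and the relation $f_{lk}=2^{l/2}P(I_\veps)(1-2Y_{\veps0})$, yield $\max_k|f_{lk}-f_{0,lk}|\lesssim\sqrt{\log n/n}$ uniformly (using that $f_0\in\cF(m_0,m_1)$ and union bounding over $2^l$ locations). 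Summing $2^{l/2}\sqrt{\log n/n}$ up to $L_n^*$ gives exactly $\veps_n^*$.

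For \emph{high levels} $L_n^*<l\le L$, the key is to show that the spike-and-slab mechanism thresholds correctly. I would analyse the factor $T_X$ in \eqref{tix}: writing $B(a,b)$ via Stirling and using $N_X(I_{\veps0})+N_X(I_{\veps1})=N_X(I_\veps)$, one sees that, when $|f_{0,lk}|\le R2^{-l(\al+1/2)}$ and $l>L_n^*$, the frequentist expectation controls $T_X$ by a polynomial in $N_X(I_\veps)$ times $\exp(cN_X(I_\veps)h_{lk}^2)$, where $h_{lk}$ is the deviation of the empirical split ratio from $1/2$. A Bernstein-type bound gives $|h_{lk}|\lesssim |f_{0,lk}|+\sqrt{\log n/(n2^{-l})}$ with high probability, which in turn makes $T_X\ll e^{\kappa l}$ for $\kappa$ large enough depending on $(m_0,m_1)$, hence $\tilde\pi_{\veps0}$ exponentially small in $l$. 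Combining with the Markov-type bound $\Pi[Y_{\veps0}\neq 1/2\mid X]=\tilde\pi_{\veps0}$ and union bounding over the $2^l$ locations, the posterior probability that \emph{any} coefficient at level $l$ is non-spiked is summable in $l$; on the complementary event, $f_{lk}=0$ for all $k$ and $|f_{lk}-f_{0,lk}|\le R2^{-l(\al+1/2)}$, which sums through $2^{l/2}\cdot 2^{-l(\al+1/2)}=2^{-l\al}$ to $\lesssim 2^{-L_n^*\al}\asymp\veps_n^*$. The leftover case when $\tilde\pi_{\veps0}$ is wrongly large is handled by using the Beta tail on the slab component to bound $|f_{lk}|$ by $\sqrt{\log n/n}$, multiplied by the small probability that the spike is inactive.

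The main obstacle is this high-level analysis: uniformly controlling $T_X$ across all $\veps$ with $L_n^*<|\veps|\le L$ requires calibrating $\kappa_0$ so that the spike wins against the slab whenever $|f_{0,lk}|$ is below the wavelet-threshold level $\sqrt{l2^l/n}$, and simultaneously showing that the slab wins for all ``truly large'' coefficients at low levels, all while allowing $\al$ to be unknown. This is where the explicit mixture formula of Theorem~\ref{thmconj} and the careful exponential decay of $\pi_l$ play the decisive role.
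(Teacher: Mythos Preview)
Your overall decomposition matches the paper's: split at the adaptive cutoff $L_n^*=\cL$, control the bias $\|f_0^{\cL^c}\|_\infty$ by H\"older, show the posterior kills all coefficients above $\cL$ via the Stirling analysis of $T_X$ (this is exactly Lemma~\ref{lempw2}), and bound $\max_k|f_{lk}-f_{0,lk}|$ by $\sqrt{\log n/n}$ for $l\le\cL$.

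There is, however, a genuine gap in your low-level argument. First, Theorem~\ref{adapt} does \emph{not} use the flat-initialisation prior; that device appears only in Theorems~\ref{BVM} and~\ref{thmcs}. More importantly, your claim that $\tilde\pi_{\veps0}$ is ``very close to $1$'' uniformly for $l\le L_n^*$ is false: by Lemma~\ref{lempw}, the slab wins only when $|y_{\veps0}-1/2|>\Delta_l$, i.e.\ when $|f_{0,lk}|\gtrsim\sqrt{\log n/n}$. For a small coefficient at a low level --- and nothing in $\cH(\al,R)$ prevents $f_{0,lk}$ from being tiny or zero for some $(l,k)$ with $l\le\cL$ --- the posterior weight $\tilde\pi_{\veps0}$ can be bounded away from $1$, so you cannot reduce to the pure Beta posterior there.

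The paper's route around this is to avoid controlling $\tilde\pi_{\veps0}$ directly at low levels and instead bound $|f_{lk}-f_{0,lk}|$ uniformly, whatever the mixture does. This is the content of the third inequality in Lemma~\ref{Lcomp}: one first shows $|\bar f_{lk}-f_{0,lk}|\lesssim\sqrt{L/n}$ via Lemma~\ref{L2}, whose proof explicitly absorbs the spike contribution through the bound $(1-\tilde\pi_{\veps0})|1/(2y_{\veps0})-1|\le\Delta_l\,\1_{|y_{\veps0}-1/2|\le\Delta_l}+C/n$; then one shows $|f_{lk}-\bar f_{lk}|\lesssim\sqrt{L/n}$ on a set $\cA$ of high posterior probability (Lemma~\ref{lempost}). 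The point is that when $|f_{0,lk}|$ is small, having $f_{lk}=0$ from the spike is harmless, and when it is large, the slab dominates --- but the argument must handle both cases simultaneously, which your ``$\tilde\pi_{\veps0}\approx 1$'' shortcut does not.
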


This theorem is an adaptive version of Theorem 1 of \cite{castillo2017}. There are few results so far dealing with convergence rates in Bayesian density estimation with respect to the supremum-norm loss, among those are the results from \cite{castillo}, \cite{hoffmann2015} and \cite{naulet}. So far, sharp results were known only in the case of regularities larger than $1/2$. Here Theorem \ref{adapt} covers the whole range $(0,1]$ in an adaptive and optimal way. It has been recently hypothesized \cite{yoorr18} that posterior distributions in models far enough apart from Gaussian white noise could perhaps miss adaptive optimal rates in low regularity settings. Theorem \ref{adapt} shows this does not happen in density estimation, and indeed here the proof does not go through using a closeness of the density model with respect to Gaussian white noise (which would possibly require $\al>1/2$ to work if using a form of asymptotic equivalence between the models). Let us now briefly comment on the conditions. The condition that $f_0$ is bounded away from $0$ and $\infty$ is a standard assumption for likelihood--based methods, see also \cite{castillo2017} for more comments on this. The condition that $\al\in(0,1]$ is inherent to the fact that we work with projections of P\'olya  tree densities onto regular dyadic histograms (this projection seems the most natural as the density is defined recursively from successive dyadic partitions of $(0,1]$), and seems difficult to remove at least staying within the type of tree construction considered here, see also the Discussion below. Finally, the exponential decrease in terms of the depth $l$ for the prior $\pi_\veps$ is typical for spike--and--slab type priors, and a similar condition also features in \cite{hoffmann2015} for those in the Gaussian white noise model.

\begin{remark} \label{rem-constants}
The lower bound $\kappa_0$ for the thresholding constant  in Theorem \ref{adapt}  depends on the parameters $m_0, m_1$. A similar phenomenon occurs for classical wavelet thresholding estimators in density estimation, see e.g. \cite{djkp96}  (eq. (9) and Theorem 3), where the thresholding constant has to be chosen large enough in terms of regularity parameters of the true density. There are two simple ways to build a prior that is robust to having no knowledge of $m_0, m_1$. A first option is to take $\kappa=\kappa_n$ to increase to infinity at an arbitrary slow rate. Then the posterior can be seen to contract at optimal rate $\veps_{n,\al}^*$ up to an arbitrarily slow multiplicative factor (a power of $\kappa_n$). A second  option is to set $\pi_{\veps}\propto e^{-\kappa l \log{l}}$ with $\kappa$ fixed; then the posterior rate can be shown to be $\veps_{n,\al}^*(\log\log{n})^\eta$, for some $\eta>0$, which is optimal up to a multiplicative $\log\log(n)^\eta$ factor.
\end{remark}

\begin{remark} \label{rem-rate-flati}
It can be checked that the results of Theorem \ref{adapt} remain unchanged if one takes a spike and slab P\'olya tre prior with flat initialisation up to level $\ell_0(n)$, provided $2^{\ell_0(n)}$ grows to infinity slower than any given power of $n$ (e.g. $\ell_0(n)=\log_2^\eta(n),\ \eta\in (0,1)$ or $\ell_0(n)=\log_2(n)/\log\log(n)$).
\end{remark}

\subsection{Adaptive Bernstein--von Mises theorem} \label{sec-bvm}

To establish a nonparametric Bernstein--von Mises (BvM) result, following \cite{castillo2014} one first finds a space $\mathcal{M}_{0}$ large enough to have convergence at rate $\sqrt{n}$ of the posterior density to a Gaussian process. One can then derive results for some other spaces $\mathcal{F}$ using continuous mapping for continuous functionals $\psi : \mathcal{M}_0 \to \mathcal{F}$. A space that combines well with the supremum norm structure was introduced by \cite{castillo2014} and defined as follows, using an `admissible' sequence $w=(w_l)_{l\geq 0}$ such that $w_l/\sqrt{l} \to \infty$ as $l \to \infty$,
\begin{equation}\label{aimezero}
\mathcal{M}_0=\mathcal{M}_0(w)=\left\lbrace x=(x_{lk})_{l,k}\text{ ; }\lim_{l \to \infty} \max_{0\leq k < 2^l} \frac{|x_{lk}|}{w_l}=0\right\rbrace.
\end{equation}

\noindent Equipped with the norm $\di \|x\|_{\mathcal{M}_0}= \sup_{l\geq 0}\max_{0\leq k < 2^l} |x_{lk}|/w_l$, this is a separable Banach space. In a slight abuse of notation, we  write $f \in \mathcal{M}_0$ if the sequence of its Haar wavelet coefficients belongs to that space $(\langle f, \psi_{lk}\rangle)_{l,k}\in \mathcal{M}_0$ and for a process $(Z(f),\, f\in L^2)$, we write $Z\in\cM_0$ if the sequence $(Z(\psi_{lk}))_{l,k}$ belongs to $\cM_0(w)$ almost surely.  

{\em White bridge process.} For $P$ a probability distribution on $[0,1]$, let us define following \cite{castillo2014} the $P$-white bridge process, denoted by $\mathbb{G}_P$. This is the centered Gaussian process indexed by the Hilbert space $\di L^2(P)=\{f:[0,1]\to \mathbb{R}; \int_0^1f^2dP<\infty\}$ with covariance
\begin{equation}\label{covv}
E[\mathbb{G}_P(f)\mathbb{G}_P(g)]=\int_0^1(f-\int_0^1fdP)(g-\int_0^1gdP)dP.
\end{equation} 
We  denote by $\mathcal{N}$ the law induced by $\mathbb{G}_{P_0}$ (with $P_0=P_{f_0}$) on $\cM_0(w)$. The sequence $(\mathbb{G}_P(\psi_{lk}))_{l,k}$  indeed defines a tight Borel Gaussian variable in $\cM_0(w)$, by Remark 1 of \cite{castillo2014}.

{\em Admissible sequences $(w_l)$.} 
The main purpose of the sequence $(w_l)$ is to ensure that $(\mathbb{G}_P(\psi_{lk}))_{l,k}$ belongs to $\mathcal{M}_0$. Intuitively, without these weights $w_l$, the maximum in \eqref{aimezero} would be over $2^l$ Gaussian variables and of order $\sqrt{2\log(2^l)}=C\sqrt{l}$ which does not tend to $0$ as $l \to \infty$. This also explains why $w_l$ needs to be `just above' $\sqrt{l}$. We refer to \cite{castillo2014}, Section 2.1 and Remark 1, for more background on the choice of $(w_l)$ in the present multiscale setting, and to \cite{castillo2013}, Section 1.2, for a similar discussion in an Hilbert space setting where the targeted loss is the $L^2$--norm.

{\em Bounded Lipschitz metric.} Let $(\mathcal{S},d)$ be a metric space. The bounded Lipschitz metric $\beta_{\mathcal{S}}$ on probability measures of $\mathcal{S}$ is defined as, for any $\mu,\nu$ probability measures of $\mathcal{S}$,

\begin{equation}
\beta_{\mathcal{S}}(\mu,\nu)=\sup_{F;\|F\|_{BL}\leq1}\left|\int_{\mathcal{S}}F(x)(d\mu(x)-d\nu(x))\right|,
\end{equation}

\noi where $F:\mathcal{S}\to \mathbb{R}$ and

\begin{equation}
\|F\|_{BL}=\sup_{x\in \mathcal{S}}|F(x)|+\sup_{x\neq y}\frac{|F(x)-F(y)|}{d(x,y)}.
\end{equation}

\noi This metric metrises the convergence in distribution: $\mu_n \to \mu$ in distribution as $n\to \infty$ if and only if $\beta_{\mathcal{S}}(\mu_n,\mu) \to 0$ as $n\to \infty$, see e.g. \cite{D02}, Theorem 11.3.3.

{\em Recentering the distribution.} To establish the BvM result, one also has to find a suitable way to center the posterior distribution. In this view, denote by $P_n$ the empirical measure
\begin{equation} \label{empmeas}
P_n=\frac{1}{n}\sum_{i=1}^n \delta_{X_i}.
\end{equation}

\noindent Let us also consider $C_n$, which is a smoothed version of $P_n$, defined by

\begin{equation}\label{C_n}
\langle C_n,\psi_{lk} \rangle = \left\{
 \begin{array}{ll}
\langle P_n,\psi_{lk} \rangle & \text{ if } l \leq L\\
\ 0 & \text{ if }  l> L, 
\end{array}\right.
\end{equation}
 where $L$ is the maximal cutoff defined by \eqref{ellmax}.

 We finally introduce $T_n$, which depends on the true parameter $\al$, defined by

\begin{equation}\label{Tn}
\langle T_n,\psi_{lk} \rangle = \left\{ 
\begin{array}{ll}
\langle P_n,\psi_{lk}\rangle,\ &\text{ if } l \leq \cL\\
\ 0,\qquad &\text{ if }  l> \cL, 
\end{array}\right.
\end{equation}

\noi where we defined $\cL=\cL_n(\al)$ to be the integer such that \begin{equation} \label{cell}
2^{\cL} = \lfloor c_0\left(\frac{n}{\log n}\right)^{\frac{1}{1+2\alpha}}\rfloor\end{equation} 
for a suitable large enough constant $c_0>0$ (for $f_0\in\cH(\al,R)$ we see below that the choice $c_0=4R^2$ suits our needs uniformly in $\al\in(0,1])$.

{\em Nonparametric BvM result.} For the following statement, as well as in the next section, we work with spike and slab priors with flat initialisation, as defined at the end of Section \ref{sec-pt}. This is necessary for the next result to hold, see the comments below.

We have the following Bernstein-von Mises phenomenon for $f_0$ in H\"older balls. For $C_n$ as in \eqref{C_n}, we denote by $\tau_{C_n}$ the map $\tau_{C_n}:f\to \sqrt{n}(f-C_n)$.
\begin{thm}\label{BVM}
Let $\mathcal{N}$ denote the distribution induced on $\cM_0(w)$ by the $P_0$--white bridge  $\mathbb{G}_{P_0}$ as defined in \eqref{covv} and let $C_n$ be the centering defined in \eqref{C_n}. Let $l_0=l_0(n)$ be an increasing and diverging sequence with $l_0(n)=o(\log{n})$. For  $m_0, m_1>0$, for $L$ as in \eqref{ellmax}, we consider as prior $\Pi$ a spike and slab P\'olya tree with flat initialisation: for $a\ge 1$ an integer,
\begin{align}
Y_{\veps0}\ \sim  & \ \bet(a,a),\quad \text{ for }|\veps|\leq l_0 \label{priorbvm1}\\
Y_{\veps0}\ \sim &\  (1-\pi_{\veps0})\delta_{1/2}+\pi_{\veps0}\bet(a,a),\quad  \text{ for }l_0<|\veps|\leq L, \label{priorbvm2}
\end{align}
\noi where $\pi_\veps  = e^{-\kappa |\veps|}$ with $\kappa$ chosen as in Theorem \ref{adapt}.
The posterior distribution then satisfies a weak BvM: for every $\al\in(0,1]$ and $R>0$,  
$$\di \sup_{f_0 \in \mathcal{H}(\al,R)\cap \cF(m_0,m_1)} E_{f_0}\left[\beta_{\mathcal{M}_0(w)}(\Pi(\cdot|X)\circ \tau_{C_n}^{-1},\mathcal{N})\right]\to 0,$$
as $n \to \infty$ and for any admissible sequence $w=(w_l)$ with $w_{l_0(n)}/\sqrt{\log{n}} \to \infty$.
\end{thm}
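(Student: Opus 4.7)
The plan is to follow the multiscale Bernstein--von Mises framework of \cite{castillo2014}, adapted to the spike and slab P\'olya tree prior via the conjugacy of Theorem~\ref{thmconj}. The key identity is $f_{lk}=2^{l/2}P(I_\veps)(1-2Y_{\veps0})$, combined with the crucial fact that under the posterior the variables $\tilde Y_{\veps0}$ remain independent across $\veps\in\cE$. Since the prior has flat initialisation up to level $l_0=l_0(n)$ and is truncated at $L$, the indices naturally split into three regimes: (a) $|\veps|\le l_0$, where the prior is a pure Beta product; (b) $l_0<|\veps|\le L$, where spike and slab is active; and (c) $|\veps|>L$, where $f_{lk}=\psg C_n,\psi_{lk}\psd=0$ identically so the contribution vanishes. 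Convergence in $\cM_0(w)$ of $\Pi(\cdot\given X)\circ\tau_{C_n}^{-1}$ towards $\cN$ will be checked via finite-dimensional convergence on regime (a) plus uniform tail tightness on regime (b), using the standard characterisation of weak convergence in the separable Banach space $\cM_0(w)$.

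\textbf{Low-frequency convergence.} For $|\veps|\le l_0$, the posterior law of $\tilde Y_{\veps0}$ is the pure Beta law $\bet(\alpha_{\veps0}(X),\alpha_{\veps1}(X))$, and $\alpha_{\veps j}(X)=N_X(I_{\veps j})+a$ is of order $n 2^{-|\veps|}$ under $P_{f_0}$. Expanding $f_{lk}=2^{l/2}P(I_\veps)(1-2\tilde Y_{\veps0})$ around the empirical coefficient $P_{n,lk}=\psg P_n,\psi_{lk}\psd$ and applying a CLT for Beta random variables (as in the proof of Theorem~6 of \cite{castillo2017}), one obtains that the joint posterior law of $\{\sqrt{n}(f_{lk}-P_{n,lk})\}_{|\veps|\le l_0}$ converges in bounded Lipschitz distance on $\R^{2^{l_0+1}}$ to the Gaussian vector $\{\mathbb{G}_{P_0}(\psi_{lk})\}_{|\veps|\le l_0}$ with covariance \eqref{covv}. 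Since $l\le l_0\le L$, the centring $C_n$ agrees with $P_n$ on this range, which delivers the finite-dimensional part of the BvM on an increasing index set of size $2^{l_0+1}\ll\sqrt{n}$.

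\textbf{Uniform tail tightness on higher frequencies.} For $l_0<|\veps|\le L$, the goal is to prove
\[
 \Pi\!\left(\sup_{l_0<l\le L}\max_{0\le k<2^l}\frac{|\sqrt{n}(f_{lk}-\psg C_n,\psi_{lk}\psd)|}{w_l}>\eta\ \Big|\ X\right)\rap 0
\]
for any $\eta>0$. I would decompose each coefficient according to whether $\tilde\pi_{\veps0}$ is close to $1$ (``active'') or close to $0$ (``inactive''). On inactive indices the posterior concentrates at $\tilde Y_{\veps0}=1/2$, hence $f_{lk}=0$ and the contribution reduces to $-\sqrt{n}\,\psg C_n,\psi_{lk}\psd/w_l$; the supremum of the latter tends to $0$ in $P_{f_0}$-probability thanks to Bernstein's inequality for the empirical process $\sqrt{n}(P_n-P_0)(\psi_{lk})$ (centred wavelets bounded by $2^{l/2}$) together with the assumption $w_{l_0(n)}/\sqrt{\log n}\to\infty$, which yields the required bound $\max_{k}|\sqrt{n}\psg C_n,\psi_{lk}\psd|/w_l\lesssim\sqrt{l}/w_l\to 0$. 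On active indices, $\tilde Y_{\veps0}$ is essentially Beta, and sub-Gaussian concentration for the fluctuations $\sqrt{n}(f_{lk}-P_{n,lk})$, combined with posterior independence across $\veps$, controls the supremum over the at most $2^L\le n/L^2$ such indices.

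\textbf{Main obstacle and conclusion.} The delicate point is the intermediate regime where $\tilde\pi_{\veps0}$ is neither negligible nor close to $1$: there, neither the Dirac nor the Beta approximation is sharp. The resolution exploits the explicit form \eqref{tix}--\eqref{pitilda}: an expansion of $\log T_X$ under $P_{f_0}$ behaves, up to lower order, as $n(P_{n,lk})^2/(2\,P_0(I_\veps))$, so that the transition between inactive and active occurs sharply when this quantity crosses the threshold $\kappa l$. For $\kappa$ large enough (as in Theorem~\ref{adapt}), the number of $\veps$ sitting in the transition zone is $o(2^l)$ with $P_{f_0}$-probability going to $1$, and their contribution to the $\cM_0(w)$ norm is dominated by the active-indices estimate. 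Combining the low-frequency BvM (step on regime (a)) with the uniform tail control (step on regimes (b)--(c)) via the characterisation of tightness in $\cM_0(w)$ yields
\[ \beta_{\cM_0(w)}\!\left(\Pi(\cdot\given X)\circ\tau_{C_n}^{-1},\cN\right)\rap 0, \]
and a uniform integrability argument based on the bounded diameter of the prior support upgrades this to convergence in $E_{f_0}$-expectation, uniformly over $f_0\in\cH(\al,R)\cap\cF(m_0,m_1)$.
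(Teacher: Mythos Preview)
The broad architecture of your proposal—split the levels, prove a finite-dimensional BvM on low frequencies, and establish tightness on the remaining ones—matches the paper's. However, two of your steps would not close as written.

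\textbf{Finite-dimensional part on a growing block.} You claim joint convergence of $\{\sqrt n(f_{lk}-P_{n,lk})\}_{l\le l_0(n)}$ ``in bounded Lipschitz distance on $\R^{2^{l_0+1}}$''. Since $l_0(n)\to\infty$, this is not a fixed finite-dimensional statement, and a CLT for Beta variables does not by itself control convergence in growing dimension. The paper sidesteps this cleanly: it fixes a level $l$ independent of $n$ and uses
\[
\beta_{\cM_0}(\tilde\Pi_n,\cN)\le \beta_{\cM_0}(\tilde\Pi_n,\tilde\Pi_n\circ\pi_l^{-1})+\beta_{\cM_0}(\tilde\Pi_n\circ\pi_l^{-1},\cN\circ\pi_l^{-1})+\beta_{\cM_0}(\cN\circ\pi_l^{-1},\cN).
\]
For the middle term, because $l_0(n)\ge l$ eventually, the first $l$ levels carry a pure Beta prior and one is in the genuinely finite-dimensional setting of \cite{castillo2017}, Theorem~3. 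The two outer terms are handled via a separate tightness statement (Theorem~\ref{tight}) applied with an auxiliary weight sequence $(\bar w_l)$ constructed so that $\bar w_l/w_l\to 0$ while still $\bar w_{l_0(n)}\gtrsim\sqrt{\log n}$. This interpolation between $w$ and $\bar w$ is what converts ``bounded in $\cM_0(\bar w)$'' into ``small in $\cM_0(w)$'' for the projection remainder; it is the key technical device and is absent from your sketch.

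\textbf{Tightness via active/inactive/transition indices.} Your control of the regime $l_0<l\le L$ relies on the claim that ``the number of $\veps$ sitting in the transition zone is $o(2^l)$'' and that their contribution is ``dominated by the active-indices estimate''. Neither assertion is justified, and the second is not even the right criterion: the $\cM_0$ norm is a supremum, so a single transition-zone index with $\tilde\pi_{\veps0}$ bounded away from $0$ and $1$ can already spoil the bound, regardless of how few such indices there are. The paper avoids this case analysis entirely. It proves (Lemma~\ref{Lcomp}) that on a set $A_n$ of posterior mass $1-o(1)$ one has simultaneously $f_{lk}=0$ for all $l>\cL$ and $|f_{lk}-f_{0,lk}|\le\bar\gamma\sqrt{\log n/n}$ for all $l\le\cL$. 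Dividing by $\bar w_l\ge\bar w_{l_0(n)}\gtrsim\sqrt{\log n}$ for $l>l_0$ immediately yields $\|\pi_{>l_0}(f-f_0)\|_{\cM_0(\bar w)}=O(n^{-1/2})$ on $A_n$, which is exactly the needed tightness input—without ever inspecting individual posterior weights $\tilde\pi_{\veps0}$.
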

\begin{remark}\label{rem-l}
Recalling that the typical nonparametric cut--off sequence $\cL$ in \eqref{cell} verifies $\cL\sim (\log{n})/(1+2\al)$, assuming $\ell_0(n)=o(\log{n})$ amounts to say that $\ell_0(n)$ does not `interfere' with the nonparametric cut-off $\cL$. Possible choices are, for instance, 
$\ell_0(n)=\log{n}/\log\log{n}$, or $\ell_0(n)=(\log{n})^{1/(1+2\veps)},\, \veps>0$, as in \cite{ray2017} Corollary  3.6.
\end{remark}
Theorem \ref{BVM} is an adaptive BvM result which states that the posterior limiting distribution is  Gaussian; note that, similar to the first such result recently obtained in \cite{ray2017}, one slightly modifies the spike--and--slab prior for the first levels.
Let us comment on this assumption: the prior has a flat initialisation, that is, puts weight $0$ to the spike part $\delta_{1/2}$ for the first levels $l\le l_0(n)$. Without this condition, the corresponding posterior would induce a thresholding of the wavelet coefficients $f_{lk}$ with small $l$, at level of the order $\sqrt{\log{n}/n}$. For some functions $f_0$, this would attribute non-vanishing weights to spikes for some $l\le l_0(n)$, thus preventing the posterior to be Gaussian in the limit. Such a phenomenon was studied in \cite{ray2017}, Proposition 3.7, in the white noise model, for which the author proves that the posterior is not even tight at rate $1/\sqrt{n}$, which is necessary for a weak BvM as in the statement of Theorem \ref{BVM} to hold at rate $1/\rn$, even without talking about normality of the limit. Similar negative results could be proved in the present density estimation setting as well, which is why in all results involving a BvM statement in the sequel we suppose the prior has a flat initialisation.
 
The choice of recentering of the distribution is quite flexible, as it can be checked that the result also holds if one replaces $C_n$ by the posterior mean $\bar{f}_n$ (or also the $\alpha$--dependent centering $T_n$). We note that the `canonical' centering here would be the empirical measure $P_n$ in \eqref{empmeas}, but this choice is not allowed, as $P_n$ does not below to $\cM_0(w)$, hence the need of considering appropriate truncations. For centerings that are truncated versions of  the empirical measure, it is enough  to satisfy the conditions of Theorem 1 of \cite{castillo2014}, which is the case for $L$ as in \eqref{ellmax}.

Using the methods of \cite{castillo2014}, this result  leads to several applications. A first direct implication (this follows from Theorem 5 in \cite{castillo2014}) is the derivation of a confidence set in $\cM_0(w)$. Setting
\begin{equation} \label{csmw}
\cD =\left\{f=(f_{lk}):\  \|f-C_n\|_{\cM_0(w)} \le \frac{R_n}{\rn} \right\},
\end{equation}
where $R_n$ is chosen in such a way that $\Pi[\cD\given X]=1-\ga$, for some $\ga>0$ (or taking the generalised quantile for the posterior radius if the equation has no solution) leads to a set $\cD$ with the following properties: it is a credible set by definition which is also asymptotically a confidence set in $\cM_0(w)$ and the rescaled radius $R_n$ is bounded in probability. 
Other applications are BvM theorems for semiparametric functionals via the continuous mapping theorem and Donsker-type theorems (as in Section 2.2 of \cite{castillo2017}), which do not appear here for the sake of brevity.

\subsection{Adaptive confidence bands} \label{sec-cs}

We now consider the question of deriving adaptive confidence bands (that is, confidence sets for the supremum norm) for the unknown density $f_0$. To do so, we follow the ideas outlined in \cite{castillo2013}--\cite{castillo2014} in the fixed regularity case (note however, that there the regularity was assumed known which is not the case here). Once an adaptive BvM result such as Theorem \ref{BVM} has been derived, one may attempt to intersect the resulting credible set with a ``regularity constraint" (here to be understood as a bound on a norm of an appropriate derivative of $f$, see \eqref{smoothnorm} below). To do so, as here the smoothness $\alpha$ of $f_0$ is unknown, it needs to be `estimated' beforehand. It is well-known that this estimation task is in general too ambitious over typical regularity spaces (such as the H\"older spaces considered here) and this relates to the impossibility in general to construct adaptive confidence sets that have a radius of the order of the minimax rate for adaptive estimation (this was originally noted by Low \cite{low97} in the context of density estimation for the pointwise loss; we refer to chapter 8.3 of the book by Gin\'e and Nickl \cite{ginenicklbook} for a detailed discussion and more references). 
However, this task becomes possible by slightly restricting the set of functions considered, for instance by assuming some form of `self-similarity'. Here we consider Condition 3 from \cite{ginenickl10}, also used in \cite{ray2017}, and which can only be slightly relaxed \cite{bull12}. 

{\em Self-similarity.} Given an integer $j_0>0$, we say that a density $f\in\cH(\alpha,M)$ is {\em self-similar} if, for some constant $\veps>0$, 
\begin{equation} \label{sscond}
\| K_j(f) - f \|_\infty \ge \veps 2^{-j\alpha}\quad \text{for all } j \ge j_0,
\end{equation}
where $K_j(f)=\sum_{l\le j-1}\sum_k \, \langle\psi_{lk},f\rangle \, \psi_{lk}$ is the Haar--wavelet projection  at level $j$.
The class of all such self-similar functions will be denoted by $\cH_{SS}(\alpha,M,\veps)$. 

{\em A pivot density estimator.} 
To carry out the task outlined above of `estimating' the regularity of $f_0$, we use a preliminary estimator based on the P\'olya tree spike and slab posterior distribution $\Pi[\cdot\given X]$ given through \eqref{posty}. For every $\veps\in \cE$, let 
\begin{equation}\label{pmed}
\hat{y}_{\veps0} =\hat{y}_{\veps0}(X)= {\tt{median}}\left\{  (1-\tilde{\pi}_{\veps0})\delta_{\frac{1}{2}}+\tilde{\pi}_{\veps0}\bet(\alpha_{\veps0}(X),\alpha_{\veps1}(X)) \right\}
\end{equation}
 denote the posterior median of the distribution of $\tilde{Y}_{\veps0}$ given in \eqref{posty} and set $\hat{y}_{\veps1}=1-\hat{y}_{\veps0}$. We further set $y_{\veps0}=1/2$ for any $\veps$ such that $|\veps|\ge L$. We denote by 
 \begin{equation}\label{fhat}
 \hat f\equiv (\hat f_{lk})_{l\ge 0, 0\le k<2^l},
 \end{equation}
   the histogram tree-based  density defined through variables $\hat{y}_{\veps0}$ as in \eqref{pmed} by, if $\veps=\veps(l,k)$,
   \begin{equation}
 \hat{f}_{lk} =    \hat{f}_{lk}(X)=
 \begin{cases}
\ 2^{l/2}\left[\prod_{i=1}^{l}\hat y_{\veps_1\cdots\veps_i}\right](1-2\hat y_{\veps0}) & \qquad \text{if } l\le L,\\
\ 0 & \qquad \text{otherwise}.
\end{cases}  
   \end{equation}

{\em Pivot regularity estimator under self-similarity.} We then estimate the ``effective cut-off level" of the posterior median by $\hat{L}=\hat{L}(X)$ defined as
\begin{equation} \label{lhat}
\hat{L} = \max\left\{ l:\ \hat{f}_{lk} \neq 0 \right\}.
\end{equation}
Let us define an associated regularity estimate, with $\hat{L}$ as in \eqref{lhat},
\begin{equation} \label{alhat}
\hat{\al} = \frac12\left[ \frac{1}{\hat{L}} \log_2\left(\frac{n}{\log{n}}\right) - 1 \right]. 
\end{equation}
Let $\al_0\in(0,1]$ be fixed, corresponding to a minimal regularity one wishes to adapt to. The results of Theorem \ref{thmcs} below are uniform for $\al\in[\al_0,1]$, see Remark \ref{rem-unif}. Let us define a norm on wavelet coefficients, for $g$ a bounded function on $(0,1]$ with Haar wavelet coefficients $g_{lk}=\psg g, \psi_{lk}\psd$, and a given regularity index $\be>0$ by, for $d=d(\al_0)=1/\al_0$,
\begin{align}
\|g\|_{\be,L} & = \sup_{l\ge 0} \max_{0\le k<2^l } 2^{l (H(\be,l)+ 1/2)} |g_{lk}|,  \label{smoothnorm} \\
H(\be,l) & = \be \1_{l\le dL} + \al_0 \1_{l> dL},
\end{align}
where 
 $L$ is defined in \eqref{ellmax}. The part for very high levels $l>dL$ in the definition of $H(\be,L)$ ensures that such levels do not cause trouble when checking coverage for the credible set $\cC$ in \eqref{cs} below: any function $g$ with H\"older regularity at least $\al_0$ (which we assume in the statement to follow), in particular the target $f_0$, has wavelet coefficient $(g_{lk})$ such that $2^{l(\al_0+1/2)}|g_{lk}|$ is bounded.

{\em Credible set}. Let us set, for $C_n$ as in \eqref{C_n}, $R_n$ as in \eqref{csmw} and $\hat\al$ as in \eqref{alhat}, 
\begin{equation} \label{cs} 
\cC =\left\{f=(f_{lk}):\  \|f-C_n\|_{\cM_0(w)} \le \frac{R_n}{\rn},\quad \|f\|_{\hat\al,L}\le u_n \right\},
\end{equation}
with $(w_l)$ an admissible sequence and  $u_n\to\infty$ slowly, both to be chosen below.

\begin{thm} \label{thmcs}
Let $\al_0\in(0,1]$ and let $\al\in[\al_0,1]$.  
Let $M\ge 1,\, \ga\in(0,1)$ and $\veps,m_0, m_1 >0$. Let $\Pi$ be a prior as in \eqref{priorbvm1}--\eqref{priorbvm2} in the statement of Theorem \ref{BVM}  with $l_0(n)=(\log{n})/(\log\log{n})$.   
Then the set $\cC$ defined in \eqref{cs} with $w_l=\sqrt{l}(\log{l})$ verifies
\begin{equation}\label{confidence}
  \sup_{f_0\in \cH_{SS}(\alpha,M,\veps)\cap \cF(m_0,m_1)} |P_{f_0}(f_0\in \cC) - (1-\ga) | \to 0,
\end{equation}  
as $n\to\infty$. 
In addition, for every $\al\in[\al_0,1]$ and uniformly over $f_0\in \cH_{SS}(\al,M,\veps)\cap \cF(m_0,m_1)$, the  diameter $|\cC|_\infty=\sup_{f,g\in \cC}\|f-g\|_\infty$ and the credibility of the band  verify, taking $(u_n)$ in \eqref{cs}  to be any sequence such that $\log\log{n} \le u_n\le\log{n}$, as $n\to \infty$, 
\begin{align}
 |\cC|_\infty & = O_{P_{f_0}}((n/\log n)^{-\al/(2\al+1)} u_n),\label{cred_diam}\\
 \Pi[\cC \given X] & = 1-\ga + o_{P_{f_0}}(1). \label{cred}
\end{align} 
Also, for any arbitrary strictly increasing sequence $(u_n)$ with $u_n\le \log\log{n}$, setting $w_l=\sqrt{l}u_l$, one can find a sequence $l_0(n)=o(\log{n})$ that depends on $u_n$ such that  \eqref{confidence}--\eqref{cred_diam}--\eqref{cred} are satisfied.
\end{thm}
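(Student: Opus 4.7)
The plan is to combine three ingredients: the adaptive BvM of Theorem \ref{BVM} (which governs the multiscale ball in $\cC$), the supremum-norm contraction of Theorem \ref{adapt} (which governs posterior fluctuations in the smoothness semi-norm $\|\cdot\|_{\hat\al,L}$), and a self-similarity argument that pins down $\hat\al$ close to the truth. The three claims \eqref{confidence}, \eqref{cred_diam} and \eqref{cred} will be verified separately; all three hinge on the control of $\hat\al$.

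\textbf{Main obstacle: control of $\hat L$ and $\hat\al$.} By \eqref{pmed}--\eqref{pitilda}, $\hat y_{\veps0}\neq 1/2$ precisely when $\tilde\pi_{\veps0}>1/2$, i.e.\ when the Bayes factor $T_X$ of \eqref{tix} exceeds $(1-\pi_{\veps0})/\pi_{\veps0}\asymp e^{\kappa l}$. I would first carry out a Stirling/Laplace expansion of the Beta ratios in $T_X$ to show that, under $P_{f_0}$, $\log T_X\simeq c(f_0)\,n f_{0,lk}^2$ up to stochastic fluctuations of order $\sqrt{l}$, so that the posterior median detects $f_{0,lk}$ roughly when $|f_{0,lk}|\gtrsim \sqrt{l/n}$. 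The self-similarity condition \eqref{sscond} then forces at least one coefficient of magnitude $\gtrsim 2^{-l(\al+1/2)}$ at every scale $l\le \cL_n(\al)$, so $\hat L\ge \cL_n(\al)-C$; conversely, for $l\ge \cL_n(\al)+C$ the H\"older bound $|f_{0,lk}|\le R\,2^{-l(\al+1/2)}$ keeps the signal below the detection threshold. This yields $|\hat L-\cL_n(\al)|=O(1)$ and hence $\hat\al=\al+O(1/\log n)$, uniformly over $\cH_{SS}(\al,M,\veps)\cap \cF(m_0,m_1)$. I expect this step to be the technical heart of the argument; the Bayes-factor asymptotics for the mixture prior \eqref{prior4} are delicate, though the overall strategy parallels the wavelet-thresholding analyses in \cite{ginenickl10} and \cite{ray2017}.

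\textbf{Coverage \eqref{confidence}.} For the multiscale ball, Theorem \ref{BVM} implies that $\Pi[\cdot\mid X]\circ \tau_{C_n}^{-1}\rightsquigarrow \cN$ and hence $R_n$ converges in $P_{f_0}$-probability to the $(1-\ga)$-quantile of $\|\GG_{P_0}\|_{\cM_0(w)}$. Combined with the empirical CLT $\rn (P_n-P_0)\rightsquigarrow \GG_{P_0}$ in $\cM_0(w)$ and $\|C_n-P_n\|_{\cM_0(w)}=o(1/\rn)$, this gives coverage $1-\ga$. For the smoothness constraint, the bound $\hat\al\le \al+O(1/\log n)$ and $f_0\in \cH(\al,R)\subset \cH(\al_0,R)$ yield $2^{l(\hat\al+1/2)}|f_{0,lk}|\le R\cdot 2^{L\cdot O(1/\log n)}=O(1)\ll u_n$ for $l\le dL$, while for $l>dL$ the exponent is $\al_0\le \al$, again giving a constant bound.

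\textbf{Credibility \eqref{cred} and diameter \eqref{cred_diam}.} The multiscale ball has posterior mass $1-\ga$ by definition of $R_n$, so \eqref{cred} reduces to showing $\Pi[\|f\|_{\hat\al,L}\le u_n\mid X]\to 1$. This follows by splitting $f=f_0+(f-f_0)$ and invoking Theorem \ref{adapt} to bound $\|f-f_0\|_\infty \le M\veps^*_{n,\al}$ on a set of posterior mass $\to 1$: one gets $2^{l(\hat\al+1/2)}|f_{lk}-f_{0,lk}|\le M\veps^*_{n,\al}\, 2^{l\hat\al}=O(\log\log n)\le u_n$ at the relevant cut-off, while $f_0$ itself is handled as in the coverage step. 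For the diameter, for $f,g\in \cC$ I decompose $\|f-g\|_\infty$ across scales: the low-frequency part $\sum_{l\le j^*}2^{l/2}\max_k |f_{lk}-g_{lk}|$ is bounded via the $\cM_0(w)$ constraint by $\lesssim 2^{j^*/2}w_{j^*} R_n/\rn$, and the high-frequency part $\sum_{l>j^*} 2^{l/2}\max_k|f_{lk}-g_{lk}|$ via the smoothness constraint by $\lesssim u_n\, 2^{-j^* \hat\al}$. Balancing at $j^*\asymp \cL_n(\hat\al)$ and using $\hat\al\approx \al$ yields the rate $u_n(n/\log n)^{-\al/(2\al+1)}$. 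The final flexibility assertion on arbitrary $(u_n)$ with $w_l=\sqrt{l}\,u_l$ is obtained by choosing $l_0(n)$ slowly enough (depending on $u_n$) that Theorem \ref{BVM} still applies with these weights, the same balance then producing a diameter with multiplicative factor $u_{j^*}\asymp u_n$.
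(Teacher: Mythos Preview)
Your overall strategy matches the paper's closely: combine the BvM for the multiscale ball, control of $\hat\al$ (the paper's Lemma \ref{lemreg} gives exactly your $|\hat L-\cL|=O(1)$ via Lemmas \ref{lempw}--\ref{lempw2} and a Hoffmann--Nickl window argument under self-similarity), and the supremum-norm machinery. The coverage and diameter arguments are essentially the same as the paper's, the only cosmetic difference being that the paper centers at $T_n$ and splits at $\cL$ and $dL$ rather than balancing at a generic $j^*$.

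There is, however, a real gap in your credibility step \eqref{cred}. You invoke only the statement of Theorem \ref{adapt}, i.e.\ the bound $\|f-f_0\|_\infty\le M\veps^*_{n,\al}$, which via $|f_{lk}-f_{0,lk}|\le 2^{-l/2}\|f-f_0\|_\infty$ yields $2^{l(\hat\al+1/2)}|f_{lk}-f_{0,lk}|\lesssim 2^{l\hat\al}\veps^*_{n,\al}$. This is $O(1)$ for $l\le \cL$, but $\|f\|_{\hat\al,L}$ is a supremum over \emph{all} $l$, and for $\cL<l\le L$ the factor $2^{l\hat\al}\veps^*_{n,\al}$ blows up (at $l=L$ it behaves like $n^{2\al^2/(2\al+1)}$). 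What is actually needed is the posterior \emph{selection property}: with posterior probability tending to $1$, $f_{lk}=0$ for every $l>\cL$. This is the content of Lemma \ref{Lcomp} (the set $\cA_1$ in the proof of Theorem \ref{adapt}), and the paper invokes it explicitly at this point to kill all levels $l>\cL$ before bounding the remaining ones. The sup-norm rate as a black box does not deliver this; you must use the finer structure from inside the proof of Theorem \ref{adapt}.

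Two minor points: self-similarity does not force a large coefficient at \emph{every} scale, only at some scale in each window of $N$ consecutive scales (this is still enough for $\hat L\ge \cL-O(1)$); and $P_n\notin\cM_0(w)$ (its high-frequency coefficients do not decay), so the claim $\|C_n-P_n\|_{\cM_0(w)}=o(1/\rn)$ is ill-posed --- the correct route, which the paper takes via \cite{castillo2014}, is to establish $\rn(C_n-f_0)\rightsquigarrow\GG_{P_0}$ in $\cM_0(w)$ directly.
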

Theorem \ref{thmcs} shows that, under self-similarity, for any prescribed confidence level $\ga\in(0,1)$, the set $\cC$ in \eqref{cs} is a credible set of credibility going to $1-\ga$, has confidence level going to $1-\ga$, and is of optimal minimax diameter up to an arbitrary undersmoothing factor $u_n$. This result is comparable, for $\al\in(0,1]$, to the results obtained in the Gaussian white noise model by Ray in \cite{ray2017}. It guarantees that well--chosen credible sets from the posterior distribution have optimal frequentist coverage and diameter. Those are, to the best of our knowledge, the first Bayesian results of this kind in the model of density estimation. The study of frequentist coverage of Bayesian nonparametric credible sets is very recent, we refer to \cite{svv15} for adaptive results in regression and a discussion on the topic, as well as to the recent contributions \cite{svvl2}, \cite{nickl2017}, \cite{cr19}. 

Finally, we note that other ways of constructing confidence sets can be considered. Here we followed the approach suggested by \cite{castillo2014}, but another possibility  would be to proceed following the approach of \cite{svv15}, who directly consider a credible ball in the norm of interest, and then `inflate' it by some large enough factor. Although beyond the scope of the paper, it would be interesting to investigate this approach in the present setting as well.

\begin{remark}\label{rem-unif}
 It can be checked that the results of Theorem \ref{thmcs} are uniform in $\al\in [\al_0,1]$, that is, the constants arising depend only on a lower bound $\al_0$ on the regularity. Also,  the choice of $l_0(n)$ is for simplicity and ensures that $l_0(n)$ is of smaller order than any cut-off $\cL$ arising from \eqref{cell}. For the difference choice  $l_0(n)=(\log{n})^{1/(1+2\veps)},\, \veps>0$, as considered in \cite{ray2017}, Corollary 3.6, the results would be similar, up to slightly different logarithmic terms in the diameter statement for the confidence set.
\end{remark}

\subsection{Discussion} \label{sec-disc}

Let us start this discussion by an analogy. Broadly speaking, P\'olya trees can be seen as density estimation--analogues of Gaussian processes for Gaussian noise regression. Both processes are conjugate in the respective regression and density models. While both processes can be chosen in such a way that they model well $\al$--smooth functions, for fixed $\al$ (with $\alpha\in (0,1]$ for P\'olya trees), they are too `rigid' to allow for statistical adaptation: it can be shown that a mismatch in the specification of the parameters with respect to the true regularity of $f_0$ leads to a suboptimal convergence rate of the corresponding posterior distributions. This difficulty can be overcome by allowing more flexibility in the construction of the prior: for instance, the amount of `smoothing' of a Gaussian process can be drawn at random: one then departs from strictly Gaussian distribution, but it is then possible to get  adaptation, as obtained in  \cite{vvvz09} for adaptation in terms of the quadratic risk. For P\'olya trees, the `evenly-split' idea introduced in \cite{gvbook17} can  be used to build an adaptive posterior, through a procedure we call spike--and--slab P\'olya tree, with the usual Dirac mass at $0$ in the standard spike and slab construction here replaced by a mass at $1/2$. By taking an exponentially decreasing amount of mass outside $1/2$ in the prior distribution, the corresponding posterior distribution is both shown to adapt optimality to the unknown regularity for the supremum norm in terms of rate and in terms of uncertainty quantification.

It is also interesting to put the results in the paper in perspective of wavelet thresholding methods for density estimation \cite{djkp96}. Such estimators keep only empirical wavelet coefficients $\psg \psi_{lk},\mathbb{P}_n\psd$ whose absolute value is larger than some threshold. By doing so, the resulting estimator is not necessarily a density. Note that here the construction respects the density model structure in that the posterior still sits on densities by construction, hence there is no need to project back the estimator onto densities. Constructions of confidence bands in nonparametrics using non--Bayesian approaches have so far  in most cases  been achieved by carefully designing a center and estimate of the radius of the band (for instance using Lepski's method): we refer to \cite{ginenickl10} for such a construction. Here the band $\cC$ in \eqref{cs} is obtained relatively simply on the basis of posterior information only.

Let us now briefly discuss some possible extensions. We note that different ways of deriving adaptation 
could be envisioned: keeping the standard P\'olya tree definition up to a certain level $L$ and then choosing the truncation level $L$ at random would lead to a sieve prior (also called `flat tree' in \cite{cr19}), which would lead, up to a logarithmic factor, to optimal adaptative rates in $L^2$. Note that the Spike and Slab P\'olya tree prior from Theorem \ref{adapt} already gives an adaptive $L^2$ rate up to a logarithmic term (this follows by using $\|\cdot\|_2\le \|\cdot\|_\infty$ on the unit interval; obtaining a sharp $L^2$--rate could be done using a block prior similar in spirit to that built in Section 3.2 of \cite{hoffmann2015}).  Another option, similar in spirit to the construction in \cite{WongMa} would be to draw a binary tree itself at random to specify a skeleton of variables $y_{\veps0}$ equal to $1/2$. Using recently introduced techniques in \cite{cr19}, this should lead to optimal adaptation in $L^\infty$ (up perhaps to logarithmic terms). The work by Ma \cite{lima17} proposes (and derives posterior consistency for) an `adaptive P\'olya tree prior' where the parameters from the Beta variables are themselves given a prior distribution; obtaining posterior rates and inference properties for these flexible priors would be interesting too.  
Concerning the assumption on the regularity  $\alpha\in(0,1]$, it seems difficult to get optimal rates for higher regularities following the original P\'olya tree construction -- note that here we already modify the original prior to get adaptation --, but changing the construction of the prior to allow for more `dependence' (and hence enforce more `regularity) looks promising.  These questions will be considered elsewhere. Finally, in terms of uncertainty quantification, one could possibly slightly extend the notion of self-similarity considered, by proceeding similarly as in \cite{bull12}; it would also be interesting to investigate possible differences between the credible sets based on multiscale norms considered here and sets based on an $L^\infty$--norm posterior quantile. 
  
\section{Proofs of the main results}      \label{sec-proofs}       
\subsection{Preliminaries and notation}      


{\em General notation.}
For a given distribution $P$ with distribution function $F$ and density $f$ on $[0, 1]$, denote $P(B) = F(B) = \int_B f$, for any measurable subset $B$ of $[0, 1]$. In particular under the ``true'' distribution, we denote $P_0(B) = F_0(B) =  \int_B f_0$ as well as $p_\veps=P(I_\veps)$.

For a function $f$ in $L^2$, and $\La$ an integer, denote by $f^{\La}$ the $L^2$-projection of $f$ onto the linear span of all elements of the basis $\{\psi_{lk}\}$ up to level $l = \La$. Also, denote $f^{\La^c}$ the projection of $f$ onto the orthocomplement $\text{Vect}\{\psi_{lk},\, l > \La\}$. In the proofs, we shall use the decomposition $f = f^{\La} + f^{\La^c}$, which holds in $L^2$ and $L^\infty$ under prior and posterior as $f$ is truncated at level $\La$ so has a finite Haar expansion under prior and posterior.

To denote variables $Y$ following the posterior distribution, or to denote the posterior mean, we will henceforth use the following notation.

\textit{1. Tilded notation, posterior distribution.} We denote by $\tilde{P}$ a distribution sampled from the posterior distribution and by $\tilde{Y}$ the corresponding variables $Y$ in \eqref{DEF}. In particular, the variable $\tilde{Y}_{\veps0}$ is distributed following the marginal a posteriori law as specified in \eqref{posty}.

\textit{2. Bar notation, posterior mean.} Let $\bar{f} = \di \int fd\Pi(f |X)$ denote the posterior mean density and $\bar{P}$ the
corresponding probability measure. We use the notation $\bar{Y}$ for the variables defining $\bar{P}$ via \eqref{DEF}. It follows from Theorem \ref{thmconj} that $\bar{P}$ is a tree--type distribution with corresponding variables given by the mean of the variables $\tilde{Y}_{\veps0}$ that is
\begin{equation} \label{yepsbar}
\bar{Y}_{\veps0} = (1-\tilde{\pi}_{\veps0})\frac12 + \tilde{\pi}_{\veps0}
\frac{\al_{\veps0}(X)}{\al_{\veps0}(X)+\al_{\veps1}(X)}.  
\end{equation}

\noi {\em Variables along the tree.} 
Let us recall the definition of the cut-offs $L$  in \eqref{ellmax} and $\cL$  in \eqref{cell}.
 Let, for any $\veps\in\cE$, with $|\veps|=l\le L$, with $L$ as in \eqref{ellmax},
\begin{equation} \label{pyeps} 
p_{0,\veps} = F_0(I_{\veps}),\qquad y_{\veps0} = \frac{F_0(I_{\veps0})}{F_0(I_\veps)}. 
\end{equation}
Further, for $C_0$ to be chosen below, we denote 
\begin{equation}\label{del} 
 \Delta_l=\sqrt{C_0\frac{L2^l}{n}}.
\end{equation}
One notes the key identity, which follows from the definition of the Haar basis: for any bounded density function $f$ corresponding to a distribution $P$ on $[0,1]$,  if \[ f_{lk}=\psg f,\psi_{lk}\psd, \qquad p_\veps=P(I_\veps),\]
we have,  \[ f_{lk}=2^{l/2}(-P(I_{\veps0})+P(I_{\veps1})) = 2^{l/2}(P(I_\veps)-2P(I_{\veps0})).\]  
If $P$ is a tree--type distribution defined through variables $Y_{\veps0}$ as defined in Section \ref{sec-tree}, 
and if $f$ is its corresponding histogram--density, this  leads to
\begin{equation} \label{flk}
f_{lk} = 2^{l/2}p_\veps(1-2Y_{\veps0}),
\end{equation}
using that $p_{\veps0}=p_\veps Y_{\veps 0}$ by definition. 
 By taking the expectation with respect to the posterior distribution $P\given X$, one obtains 
\begin{equation} \label{fbarlk}
\bar{f}_{lk}=2^{l/2}\bar{p}_\veps(1-2\bar{Y}_{\veps0}),
\end{equation}
where we write $\bar{p}_\veps=\bar{P}(I_\veps)$ and $\bar{f}_{lk}=\psg f,\psi_{lk}\psd$ with $\bar{f}=\int f d\Pi(f\given X)$, recalling that the posterior mean $\int Pd\Pi(P\given X)$ is also a tree-type distribution with associated variables $\bar{Y}_{\veps0}$.  
On the other hand, using the definition of $y_{\veps 0}$, one also gets
\begin{equation} \label{folk}
f_{0,lk}=2^{l/2}p_{0,\veps}(1-2y_{\veps0}). 
\end{equation}


\noi {\em An event of high probability.}
For any integer $l$ and for $L$ as in \eqref{ellmax}, let us set
\begin{equation}\label{LAMBDAn}
\Lambda_n(l)^2 := (l + L)\frac{n}{2^l}.
\end{equation}
Define $\mathcal{B}$ the event on the dataspace, for $M$ large enough to be chosen below,
\begin{equation}\label{B}
\cB:=\bigcap_{l = 0}^L \bigcap_{0 \leq k < 2^l} \left\{  |N_X(I^l_k ) - nF_0(I^l_k )| \leq M\Lambda_n(l) \right\}.
\end{equation}
The probability of the complement $P_{f_0}(\cB^c)$ vanishes by Lemma \ref{lemB}.

\subsection{Proof of Theorem \ref{thmconj}} \label{sec-proofconj}

The prior distribution can be interpreted as a prior on the $Y=(Y_{\veps0})_{|\veps|\le L}$ parameters following the spike and slab P\'olya tree distribution as in the statement, themselves defining a distribution on histogram--densities, with random height on the interval $I_\veps$ prescribed by $2^LP(I_\veps)=2^L\prod_{i\le L} Y_{\veps}^{[i]}$, for $|\veps|=L$. Note that in this setting both distributions of $Y$ and of $X\given Y$ then have densities with respect to (finite products of) Lebesgue measure. 
The joint density of $Y,X$ is equal to  
 \[ f(X_1,\ldots,X_n) \di \prod_{\veps:\, |\veps|=0}^{L-1}\left((1-\pi_{\veps0})\1_{\frac{1}{2}}(Y_{\veps0})+\pi_{\veps0}p_{\al_{\veps0}} Y_{\veps0}^{\al_{\veps0}-1}(1-Y_{\veps0})^{\al_{\veps1}-1}\right),\]
where  $\di f(X_1,\ldots,X_n)$ is the product histogram prior density at $(X_1,\ldots,X_n)$ (that is, the density of $X$ given $Y$) given by 
\[\prod_{i=1}^n \prod_{\veps:\, |\veps|=L}(2^LP(I_\veps))^{\1_{X_i\in I_\veps}}=(2^L)^n\prod_{\veps:\, |\veps|=L}P(I_\veps)^{N_X(I_\veps)}.\]
Using the fact that $P$ is tree-induced with variables $Y_{\veps0}$ and noticing that $\di \prod_{\veps:\, |\veps|=0}^{L-1}2^{N_X(I_\veps)}=2^{nL}$, 
\begin{align*}
f(X_1,\ldots,X_n)&= (2^L)^n\prod_{\veps:\, |\veps|=0}^{L-1}Y_{\veps0}^{N_X(I_{\veps0})}(1-Y_{\veps0})^{N_X(I_{\veps1})}\\
&= \prod_{\veps:\, |\veps|=0}^{L-1}2^{N_X(I_\veps)}Y_{\veps0}^{N_X(I_{\veps0})}(1-Y_{\veps0})^{N_X(I_{\veps1})}.
\end{align*}
Bayes' formula now implies that  the posterior density of $Y$ given $X$ is 
\begin{align*} 
&\frac{1}{A_X} \prod_{\veps:\, |\veps|=0}^{L-1}\bigg((1-\pi_{\veps0})2^{N_X(I_\veps)}Y_{\veps0}^{N_X(I_{\veps0})}(1-Y_{\veps0})^{N_X(I_{\veps1})}\1_{\frac{1}{2}}(Y_{\veps0})+\\
&\qquad 2^{N_X(I_\veps)}\pi_{\veps0}p_{\al_{\veps0}} Y_{\veps0}^{N_X(I_{\veps0})+\al_{\veps0}-1}(1-Y_{\veps0})^{N_X(I_{\veps1})+\al_{\veps1}-1}\bigg)\\
& =\frac{1}{A_X} \prod_{\veps:\, |\veps|=0}^{L-1}\left((1-\pi_{\veps0})\1_{\frac{1}{2}}(Y_{\veps0})+2^{N_X(I_\veps)}\pi_{\veps0}p_{\al_{\veps0}} Y_{\veps0}^{N_X(I_{\veps0})+\al_{\veps0}-1}(1-Y_{\veps0})^{N_X(I_{\veps1})+\al_{\veps1}-1}\right),
\end{align*}
where the proportionality constant $A_X$ is given by 
\[ A_X= \prod_{\veps:\, |\veps|=0}^{L-1}\left((1-\pi_{\veps0})+2^{N_X(I_\veps)}\pi_{\veps0}\frac{p_{\al_{\veps0}}}{p_X}\right), \] which concludes the proof of Theorem \ref{thmconj}.
\qed

\subsection{Proof of Theorem \ref{adapt}}

The proof is based on Lemma \ref{Lcomp} below, which shows that the largest level $l$ for which the spike and slab P\'olya tree prior induces non-zero wavelet coefficients $f_{lk}$ does not overshoot the optimal cut--off $\cL$  in \eqref{cell} (provided $c_0$ there is chosen large enough), and induces wavelet coefficients not far from the true signal on levels $l\le \cL$. 
Let $S=S((f_{lk}))=\{(l,k):\ f_{lk}\neq 0\}$ denote the `support' of a given sequence of reals $(f_{lk})$. 

Consider the set of densities 
$\mathbb{A}=\cA_1 \cap \cA_2$, where,  for $\bar{\ga}$ as in Lemma \ref{Lcomp}, 
\begin{align*}
\cA_1 & = \left\{ S \cap \{l>\cL\} = \emptyset \right\}, \\
\cA_2 & = \left\{ \max_{l\le \cL,\, k} |f_{0,lk}-f_{lk}| \le \bar{\ga} \sqrt{\log{n}/n}\right\}.
\end{align*}
For  $\cL$ as in \eqref{cell}, one can decompose the difference $f-f_0$ in three terms,
\begin{equation*} 
f-f_0=(f^{\cL}-f_0^{\cL})+(f^{\cL^c})-(f_0^{\cL^c}).
\end{equation*} 
First, one bounds the pure bias term $f_0^{\cL^c}$ by using that $|f_{0,lk}|\le R2^{-l(1/2+\al)}$ as $f_0\in\cH(\al,R)$,
\begin{align*}
  \|f_0^{\cL^c}\|_\infty & = \Big\| \sum_{l>\cL} \sum_{0\leq k < 2^l} f_{0,lk}\psi_{lk}\Big\|_\infty  \leq\sum_{l>\cL}\left[ \left(\max_{0\leq k < 2^l} |f_{0,lk}| \right)\Big\|\sum_{k=0}^{2^l-1} |\psi_{lk}| \Big\|_\infty \right] \\
  & \lesssim \sum_{l>\cL}2^{-l(1/2+\al)}2^{l/2}\leqa \sum_{l>\cL}2^{-l\al} \lesssim \veps^{*}_{n,\al},
\end{align*}
for $\veps^{*}_{n,\al}$ as in \eqref{minimax4}. Second, one notes that the term $f^{\cL^c}$ is zero on $\cA_1$, as on this set $f_{lk}=0$ for $l>\cL$ by definition. Third, proceeding as for $\|f_0^{\cL^c}\|_\infty$ above,  on $\cA_2$,
\begin{align*}
\| f^{\cL}-f_0^{\cL}\|_\infty & 
\le \sum_{l\le \cL} 2^{l/2} \max_{0\le k< 2^l} |f_{lk} -f_{0,lk}|\\
& \le \sum_{l\le \cL} 2^{l/2} \bar{\ga} \sqrt{\log{n}/n}\leqa \veps^{*}_{n,\al}. 
\end{align*}
Combining the previous bounds gives that $\Pi[\|f-f_0\|_\infty>\mu\veps_{n,\al}^*, f\in \mathbb{A} \given X]$ is zero for $\mu$ a large enough constant. As $E_{f_0}\Pi[\mathbb{A}^c\given X]=o(1)$ by Lemma \ref{Lcomp}, this gives the result.

\subsection{A useful tightness result}

The proof of Theorem \ref{BVM} below uses overall a similar approach as \cite{ray2017}, but the argument has to be adapted to the density estimation model and to the specific Spike and Slab procedure considered here. Also, we need a preliminary result on the posterior concentration in the $\|\cdot\|_{\cM_0}$ norm, that we state and prove in the remaining of this section.

Let us denote, for $l\geq 0$, by $\pi_l$ the projection onto the subspace $V_l$ of $L^2[0,1]$  defined by $$V_l=\text{Vect}\{\psi_{l'k} : 0\leq l' \leq l, 0\leq k < 2^{l'}\}. $$
Similarly we denote by $\pi_{>l}$ the projection onto $\text{Vect}\{\psi_{l'k} : l' > l, 0\leq k < 2^{l'}\}$.

Let us introduce the sets, for some $\gamma>0$,
\begin{equation} \label{mcall}
\mathcal{J}_n(\gamma)=\left\lbrace(l,k):\ |f_{0,lk}|>\gamma \sqrt{\log n/n}\right\rbrace.
\end{equation}
Note, for $f_0\in \cH(\al,R)$ and recalling \eqref{cell}, that $(l,k)\in\mathcal{J}_n(\gamma)$ implies $l\leq \cL$. We will also denote by $S$ the ``support" of $f$ in terms of its wavelet coefficients 
\begin{equation} \label{suppf}
S=\left\lbrace(l,k):\ f_{lk}\neq 0\right\rbrace.
\end{equation}

\begin{thm}\label{tight}
Under the assumptions and prior distribution as in Theorem \ref{BVM}, for every $\eta>0$, $R, m_0, m_1>0$ and $\al \in (0,1]$, there exist $M>0$ and $n_0\ge 1$ such that for every $n\geq n_0$, $$\di \sup_{f_0 \in \mathcal{H}(\al,R) \cap \cF(m_0,m_1)}E_{f_0}\left[\Pi(\| f-f_0\|_{\mathcal{M}_0(\bar{w})}\geq M/\sqrt{n}|X)\right]<\eta,$$
where $\bar{w}=(\bar{w_l})$ verifies $\bar{w_l}\geqa \sqrt{l}$ and $\bar{w}_{l_0(n)}\geqa  \sqrt{\log{n}}$. 
\end{thm}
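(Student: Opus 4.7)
The plan is to split the supremum in the $\cM_0(\bar w)$-norm across three resolution regimes: low levels $l\le l_0(n)$ (where the prior has flat initialisation so each $Y_{\veps 0}$ is $\bet(a,a)$), middle levels $l_0(n)<l\le \cL$ (where the spike and slab control already yields the right rate), and high levels $l>\cL$ (where posterior draws effectively vanish). I work throughout on the intersection of the event $\cB$ from \eqref{B} with the high-posterior-probability event $\mathbb{A}=\cA_1\cap \cA_2$ used in the proof of Theorem \ref{adapt}; by Lemma \ref{lemB} and Lemma \ref{Lcomp} these have respectively $P_{f_0}$-probability and posterior probability going to one, uniformly over $f_0\in \cH(\al,R)\cap \cF(m_0,m_1)$.

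For high levels $l>\cL$, on $\cA_1$ any posterior draw has $f_{lk}=0$, so $|f_{lk}-f_{0,lk}|=|f_{0,lk}|\le R\,2^{-l(\al+1/2)}$. Plugging in $2^{\cL}\asymp (n/\log n)^{1/(2\al+1)}$ gives $\sqrt{n}\,|f_{0,lk}|\le C\sqrt{\log n}$, which is bounded by $\bar w_l$ since $\bar w_l\ge \bar w_{l_0(n)}\gtrsim \sqrt{\log n}$ for $l\ge l_0(n)$. For middle levels $l_0(n)<l\le \cL$, $\cA_2$ provides $|f_{lk}-f_{0,lk}|\le \bar\ga \sqrt{\log n/n}$ uniformly; dividing by $\bar w_l\gtrsim \sqrt{\log n}$ again gives the bound $O(1/\sqrt{n})$.

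The main technical work concerns the low levels $l\le l_0(n)$. Since the prior is a pure P\'olya tree on this part, by Theorem \ref{thmconj} the posterior on each $Y_{\veps 0}$ is $\bet(N_X(I_{\veps 0})+a,N_X(I_{\veps 1})+a)$. On $\cB$, the assumption $m_0\le f_0\le m_1$ forces both Beta parameters to be of order $n2^{-l}\gg 1$, so standard sub-Gaussian tail bounds for Beta variables give
$$\Pi\bigl(|Y_{\veps 0}-\bar Y_{\veps 0}|>u\sqrt{2^l/n}\,\bigm|\,X\bigr)\le 2e^{-c u^2}.$$
Taking $u\asymp \sqrt{l}$ and a union bound over the at most $2^l$ positions at each level, together with a Bernstein control of $|N_X(I_{\veps 0})/N_X(I_\veps)-y_{\veps 0}|$ on $\cB$, yield $\max_k |Y^{[i]}_\veps-y^{[i]}_\veps|\lesssim \sqrt{l\,2^i/n}$ simultaneously for all $i\le l\le l_0(n)$ with posterior probability $1-o(1)$. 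Expanding $p_\veps=\prod_{i\le l}Y^{[i]}_\veps$ telescopically and combining with \eqref{flk}--\eqref{folk} yields
$$f_{lk}-f_{0,lk}=2^{l/2}(p_\veps-p_{0,\veps})(1-2Y_{\veps 0})+2^{l/2+1}p_{0,\veps}(y_{\veps 0}-Y_{\veps 0}),$$
both terms being of order $\sqrt{l/n}$ (the first using $|p_\veps-p_{0,\veps}|\lesssim 2^{-l/2}\sqrt{l/n}$ which follows from the telescoping and $p_{\cdot}\asymp 2^{-l}$, the second being direct). Dividing by $\bar w_l\gtrsim \sqrt{l}$ produces the final $O(1/\sqrt{n})$ bound.

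Combining the three regimes gives, for $M$ large enough, $E_{f_0}\Pi(\|f-f_0\|_{\cM_0(\bar w)}>M/\sqrt{n}\given X)=o(1)$, uniformly over $f_0\in \cH(\al,R)\cap \cF(m_0,m_1)$, as announced. The main obstacle is the low-level analysis: the wavelet coefficients are nonlinear functionals of products of Beta variables along the tree, so obtaining a bound uniform over the $2^l$ positions at each of the $l_0(n)$ levels requires Beta concentration combined with a telescoping argument, the resulting $\sqrt{l}$-factor from the union bound being exactly compensated by the assumption $\bar w_l\gtrsim \sqrt{l}$, while the bridge with the middle regime crucially relies on $\bar w_{l_0(n)}\gtrsim \sqrt{\log n}$.
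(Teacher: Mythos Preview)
Your three-regime decomposition matches the paper's, and your treatment of the middle ($l_0(n)<l\le \cL$) and high ($l>\cL$) levels coincides with theirs, up to your using monotonicity of $\bar w$ in place of the paper's direct appeal to $\bar w_l\gtrsim\sqrt{l}$.

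For the low levels $l\le l_0(n)$ you take a different route. The paper applies Markov's inequality to reduce $E_{f_0}\Pi(\|\pi_{l_0}(f-f_0)\|_{\cM_0}>D/\sqrt n\mid X)$ to an expectation, splits $f-f_0=(f-T_n)+(T_n-f_0)$, cites the non-adaptive P\'olya tree BvM in \cite{castillo2017} for the first piece, and does a level-by-level Bernstein computation for the second. You instead work on a high-posterior-probability event, combining Beta tail bounds for $|\tilde Y_{\veps 0}-\bar Y_{\veps 0}|$ with a telescoping control of $p_\veps-p_{0,\veps}$. Your argument is more self-contained; theirs is shorter because the posterior part is outsourced.

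There is one point to fix. The event $\cB$ from \eqref{B} controls $|N_X(I_\veps)-nF_0(I_\veps)|$ by $M\La_n(l)$ with $\La_n(l)^2=(l+L)n/2^l$, so on $\cB$ the empirical part $|\bar Y_{\veps 0}-y_{\veps 0}|$ carries a factor $\sqrt{L}\asymp\sqrt{\log n}$, not $\sqrt{l}$. Fed into your telescoping this yields only $|f_{lk}-f_{0,lk}|\lesssim\sqrt{L/n}$, and after division by $\bar w_l\gtrsim\sqrt{l}$ you are left with $\sqrt{(\log n)/(ln)}$, which is not $O(1/\sqrt n)$ for small $l$. To obtain the $\sqrt{l}$ factor you claim, you must apply Bernstein's inequality afresh at each depth $i$ with threshold $C\sqrt{i\,n/2^i}$ and take a union bound over depths and positions (precisely as the paper does for its $T_n-f_0$ term); this defines a frequentist event finer than $\cB$ on which $|\bar Y^{[i]}_\veps-y^{[i]}_\veps|\lesssim\sqrt{i\,2^i/n}$. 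With that correction your argument goes through.
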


\begin{proof} 
Let us first briefly comment on the proof. For sufficiently large wavelet levels,  $l> l_0(n)$ (and so  $\bar{w}_l^{-1}\leqa 1/\sqrt{\log{n}}$ by assumption), the sequence $(\bar{w}_l)$ downweights the differences $|f_{lk}-f_{0,lk}|$ by a multiplicative factor at least of the order  $1/\sqrt{\log{n}}$, so that one can use an argument similar to that in the proof of Theorem \ref{adapt} by appealing to Lemma \ref{Lcomp} which gives $|f_{lk}-f_{0,lk}|\leqa \sqrt{\log{n}/n}$ for $l_0(n)\le l\le \cL$.  For $l\le l_0(n)$, it is important to have a spike and slab P\'olya tree prior with flat initialisation. One can then use (a slight modification of) the argument for standard P\'olya trees in \cite{castillo2017}, Theorem 3. In the rest of the proof, $\cM_0=\cM_0(\bar{w})$ with weighting sequence $(\bar{w}_l)$.

Let us fix $\eta>0$ and consider the event, 
for $\cJ_n(\bar\ga)$ and $S$ as in \eqref{mcall}--\eqref{suppf}, and $\bar{\ga}$ to be chosen,
 $$\di A_n=\left\{S^c\cap \mathcal{J}_n(\bar{\gamma})= \emptyset\right\}\cap \left\{S\cap \{l> \cL\}\neq \emptyset\right\}\cap\left\{\max_{(l,k):\,l\leq \cL}|f_{0,lk}-f_{lk}|\leq \bar{\gamma}\sqrt{(\log{n})/n}\right\}.$$
By Lemma \ref{Lcomp}, there exists $\bar{\gamma}>0$ such that for every $\al \in (0,1]$, there exists $B>0$ such that, for every $f_0 \in \mathcal{H}(\al,R)$, we have $E_{f_0}\Pi(A_n^c|X)\lesssim n^{-B}$. Let us write
\[E_{f_0}\Pi(\| f-f_0\|_{\mathcal{M}_0}\geq M/\sqrt{n}|X)\le
Q_1+Q_2+Q_3,\]
where, for $D>0$ a constant to be chosen below, and writing $l_0=l_0(n)$ as shorthand,
$$\ba
Q_1&=&E_{f_0}\Pi(\{\| f-f_0\|_{\mathcal{M}_0}\geq M/\sqrt{n}\} \cap \{\|\pi_{l_0}(f-f_0)\|_{\mathcal{M}_0}\leq D/\sqrt{n}\}\cap A_n |X)\\
Q_2&=&E_{f_0}\Pi(\{\| f-f_0\|_{\mathcal{M}_0}\geq M/\sqrt{n}\} \cap \{\|\pi_{l_0}(f-f_0)\|_{\mathcal{M}_0}> D/\sqrt{n}\}\cap A_n |X)\\
Q_3&=& E_{f_0}\Pi(A_n^c|X). 
\ea$$
The  term $Q_3$ is a $o(1)$ as seen just above. The  term $Q_1$ is bounded by $$E_{f_0}\Pi( \{\|\pi_{>l_0}(f-f_0)\|_{\mathcal{M}_0}\geq (M-D)/\sqrt{n}\}\cap A_n |X).$$ 
As $f_0 \in \mathcal{H}(\al,R)$, and as $2^{\cL} \leqa (n/\log n)^{1/(2\al+1)}$ by definition (see \eqref{cell}), we have $\mathcal{J}_n(\overline{\gamma})\subset \{(l,k):\,l\le C\cL, 0\leq k < 2^l\}$ for some constant $C=C(\al,R)>0$, so that
$$ \sup_{f_0 \in \mathcal{H}(\al,R)}\sup_{l>\cL}\ \bar{w}^{-1}_l \max_k|f_{0,lk}|\leqa \frac{R2^{-\cL(\al+1/2)}}{\sqrt{\cL}}\leqa1/\sqrt{n}.
$$
It now remains to bound the part with the frequencies $l_0<l\leq \cL$. On the event $A_n$, we have
$$\di \sup_{l_0 < l \leq \cL}\bar{w}^{-1}_l \max_k|f_{0,lk}-f_{lk}|\leq \frac{\bar{\gamma}}{\bar{w}_{l_0}}\sqrt{\frac{\log n}{n}}\leqa \frac{\bar{\gamma}}{\sqrt{n}},$$
since our assumptions imply $\bar{w}_{l_0(n)}\geqa \sqrt{\log n}$. This gives us, on $A_n$, $\|\pi_{>l_0}(f-f_0)\|_{\mathcal{M}_0}=O(n^{-1/2})$ for every $f_0 \in \mathcal{H}(\al,R)$. We therefore choose $M=M(\eta)$ to make the term $Q_1$ smaller than $\eta/2$ (in fact, the term even becomes $0$ for $M$ sufficiently large on the event $A_n$). 

The term $Q_2$ is bounded by using Markov's inequality as follows, 
\begin{align*}
Q_2 & \le E_{f_0}\Pi(\sqrt{n}\|\pi_{l_0}(f-f_0)\|_{\mathcal{M}_0}> D |X)
 \le \frac{\sqrt{n}}{D}E_{f_0}E^{\Pi}(\|\pi_{l_0}(f-f_0)\|_{\mathcal{M}_0}|X),
\end{align*}
where $E^\Pi[\cdot\given X]$ denotes the expectation (given $X$) under the posterior distribution. For $T_n$ defined in \eqref{Tn}, one can write  
\begin{align*}
E_{f_0}E^{\Pi}&\left[\|\pi_{l_0}(f-f_0)\|_{\mathcal{M}_0}|X\right]= E_{f_0}E^{\Pi}\left[\max_{l\leq l_0}\frac{1}{\bar{w}_l}\max_k |{f}_{lk}-f_{0,lk}|\given X\right]\\
& \leq \di E_{f_0}E^{\Pi}\left[\max_{l\leq l_0}\frac{1}{\bar{w}_l}\max_k |\langle f-T_n,\psi_{lk}\rangle|\given X\right] + E_{f_0}\left[\max_{l\leq l_0}\frac{1}{\bar{w}_l}\max_k |\langle f_0-T_n,\psi_{lk}\rangle|\right].
\end{align*}
It is now enough to check that both expectations in the last display can be made smaller than $C/\rn$. By taking $D=D(\eta)$ large enough, this will make the term $Q_2$ smaller than $\eta/2$. 
For the first expectation, when $l\leq l_0$ the prior is a Beta distribution with fixed parameters $(a,a)$. This is a similar setting as in the proof of \cite{castillo2017}, Theorem 3, except that there the parameters $a_l\equiv a$ of the Beta increase to infinity as $l2^{2l\al}$, $\al\in(0,1]$, but it is easily checked that this is irrelevant asymptotically for frequencies $l\leq l_0$, so by the same argument as in \cite{castillo2017} (see Lemma 8 and the proof of tightness p. 2092--2094, which establishes the result even for all $l\le \cL$), the first expectation is bounded by $C/\sqrt{n}$. The second expectation can be bounded, following the approach of the proof of Theorem 1 in \cite{castillo2014}, using that $\bar{w}_l\geqa \sqrt{l}$  and with $\kappa>0$ large enough, by 
\[
 \frac{1}{\rn}\cdot\frac{1}{D}\left[\max_{l\leq l_0}\frac{\sqrt{l}}{\bar{w}_l}\right]E_{f_0}\left[\max_{l\leq l_0}\frac{1}{\sqrt{l}}\max_k |\sqrt{n}\langle f_0-T_n,\psi_{lk}\rangle|\right]\]
The term on the right hand side of $1/\rn$ in the last display is bounded by
\begin{align*}
 \frac{\kappa}{D} &+ \frac{1}{D}\int_\kappa^{\infty}P_{f_0}\left(\max_{l\leq l_0}\frac{1}{\sqrt{l}}\max_k |\sqrt{n}\langle f_0-T_n,\psi_{lk}\rangle|>u\right)du\\
\lesssim & \di \frac{\kappa}{D} + \frac{1}{D}\sum_{l\leq l_0,k}\int_\kappa^{\infty}P_{f_0}\left(|\sqrt{n}\langle f_0-T_n,\psi_{lk}\rangle|>\sqrt{l}u\right)du\\
\lesssim & \di \frac{\kappa}{D} + \frac{1}{D}\sum_{l\leq l_0}2^l \int_\kappa^{\infty}e^{-Clu}du \lesssim \di \frac{\kappa}{D} + \frac{1}{D}\sum_{l\leq l_0}e^{-C'\kappa l} \lesssim \frac{1}{D},
\end{align*}
\noindent where the third inequality follows from an application of Bernstein's inequality (see the proof of Theorem 1 in \cite{castillo2014} p. 1959 for details).
This finally gives us that by taking $D=D(\eta)$ large enough the second expectation at stake can be made smaller than $\eta/2$, which concludes the proof of Theorem \ref{tight}.
\end{proof}

\subsection{Proof of Theorem \ref{BVM}}
Within this proof, the shorthand $\cM_0$ refers to $\cM_0(w)$ with $w=(w_l)$ the admissible  sequence from the statement of the result. As a preliminary step for the proof, let us note that starting from $(w_l)$ verifying the stated conditions, one can construct a sequence $(\bar{w}_l)$ such that a) $\bar{w}_l \geqa \sqrt{l}$ and b) $\bar{w}_{l_0(n)}\geqa \sqrt{\log{n}}$ (that is, as required for applying Theorem \ref{tight}) and additionally c) $\bar{w}_l/w_l=o(1)$ as $l\to\infty$. To see this, define $\bar{w}=(\bar{w}_i)$ by
\begin{equation} \label{defwb}
 \bar{w}_i = \sqrt{\log{p}}\qquad \text{if}\quad i\in[l_0(p),l_0(p+1)).
\end{equation} 
Property b) is immediate by definition of $\bar{w}$. Property c) follows from $\bar{w}_i/w_i=\sqrt{\log{p}}/w_i\le \sqrt{\log{p}}/w_{l_0(p)}$ for any $i\in[l_0(p),l_0(p+1))$, using that $(w_i)$ is increasing, so $(\bar{w}_i/w_i)$ is bounded by a sequence that goes to $0$ by assumption $o(1)$, so itself is $o(1)$. Finally, to check a), we have $\bar{w}_i=\sqrt{\log{p}}$ for $i\in[l_0(p),l_0(p+1))$. But $i\le l_0(p+1)\leqa \log(p+1)\leqa \log{p}$ so that $\bar{w_i}\geqa \sqrt{i}$ as requested. 

Let us recall that the centering $C_n$ is defined in \eqref{C_n}. Let us fix $\eta>0$ and denote $\tilde{\Pi}_n=\Pi(\cdot|X)\circ\tau^{-1}_{C_n}$. By the triangle inequality, uniformly over the relevant class of functions, for fixed $l>0$, we have 
\begin{equation}\label{separation}
\beta_{\mathcal{M}_0}(\tilde{\Pi}_n,\mathcal{N})\leq \beta_{\mathcal{M}_0}(\tilde{\Pi}_n,\tilde{\Pi}_n\circ \pi_l^{-1})+\beta_{\mathcal{M}_0}(\tilde{\Pi}_n\circ \pi_l^{-1},\mathcal{N}\circ\pi_l^{-1})+\beta_{\mathcal{M}_0}(\mathcal{N}\circ \pi_l^{-1},\mathcal{N}).
\end{equation}
Let us now look more precisely at the first term on the right hand side of \eqref{separation}. Take a function $F:\mathcal{M}_0\to \mathbb{R}$ such that $\|F\|_{BL}\leq 1$. Let $g_n$ be a random variable following $\tilde{\Pi}_n$ and let $\bar{w}=(\bar{w}_{l})$ be as constructed in \eqref{defwb} above. Let us also consider the events  \[D=\{\| f\|_{\mathcal{M}_0(\bar{w})}\leq M\} \text{ and } D_n=\{\| f-C_n\|_{\mathcal{M}_0(\bar{w})}\leq M/\sqrt{n}\},\] where $M$ is large enough to have $E_{f_0}\left[\Pi(\| f-f_0\|_{\mathcal{M}_0(\bar{w})}\geq M/\sqrt{n}|X)\right]<\eta/9$ as is guaranteed by Theorem \ref{tight}. Then, one can bound from above the difference
\begin{align*}
\lefteqn{\left|\int_{\mathcal{M}_0}Fd\tilde{\Pi}_n-\int_{\mathcal{M}_0}Fd\tilde{\Pi}_n\circ \pi_l^{-1}\right| 
\leq  E^{\tilde{\Pi}_n}\left[|F(g_n)-F(\pi_l(g_n))|\given X\right]}&&\\
&\leq  E^{\tilde{\Pi}_n}\left[|F(g_n)-F(\pi_l(g_n))|(\1_D+\1_{D^c})\given X\right]\leq  E^{\tilde{\Pi}_n}\left[\|g_n-\pi_l(g_n)\|_{\mathcal{M}_0}\|F\|_{BL}\1_D|X\right]+2\tilde{\Pi}_n(D^c \given X)\\
&\leq  E\left[\sup_{l'>l}\frac{1}{w_{l'}}\max_{0\leq k < 2^{l'}}|\sqrt{n}\langle f-C_n,\psi_{l'k}\rangle|\1_{D_n}\given X\right] 
 +2\Pi(\| f-C_n\|_{\mathcal{M}_0(\bar{w})}\geq M/\sqrt{n}\given X).
\end{align*} 
The first term in the last display is bounded from above by 
\begin{align*}
 \left(\sup_{l'>l}\frac{\bar{w}_{l'}}{w_{l'}}\right)E\left[\sup_{l'>l}\frac{1}{\bar{w}_{l'}}\max_{0\leq k < 2^{l'}}|\sqrt{n}\langle f-C_n,\psi_{l'k}\rangle|\1_{D_n}\given X\right] 
\leq \left(\sup_{l'>l}\frac{\bar{w}_{l'}}{w_{l'}}\right)M,
\end{align*}
which is less than $\eta/9$ by choosing $l$ large enough, while the other term is bounded, using the triangle inequality, by
\[ 2\Pi(\| f-f_0 \|_{\mathcal{M}_0(\bar{w})}\geq M/\sqrt{n} \given X)
 +2\Pi(\|f_0-C_n\|_{\mathcal{M}_0(\bar{w})}\geq M/(2\sqrt{n}) \given X).\]
The first term of the last display is bounded in expectation by $\eta/9$ by using Theorem \ref{tight}. 
 The expectation of the second term of the last display corresponds to  a purely frequentist centering and can be handled as in the proof of Theorem 1 of \cite{castillo2014} (with $j_n$ in that result corresponding to the cutoff $L$ if $C_n$ is used as centering) and be made smaller than $\eta/9$ by taking $M$ large enough. Besides, one can note that the result holds when replacing $C_n$ by $T_n$ as in \eqref{Tn} as in that case $j_n$ corresponds to $\cL$ in \eqref{cell} which satisfies the required condition of Theorem 1 of \cite{castillo2014}.
This implies that the first term of \eqref{separation} is smaller than $\eta/3$. 

The last term in \eqref{separation} is as small as desired by taking $l$ large enough:  this corresponds to the fact that the white bridge $\mathbb{G}_P$ of law $\cN$ belongs to the space $\cM_0$ (as established in \cite{castillo2014}, see the proof of Proposition 6 there).

For the middle term on the right hand side of \eqref{separation}, note that $l_0(n)\geq l$ for $n$ large enough. For such $n$, the projected prior onto the first $l$ coordinates is a product of Beta variables and we are exactly in the setting of Theorem 3 in  \cite{castillo2017}, except that the parameters $a_l\equiv a$ of the Beta are constant in our case and do not increase with the depth level. But since $l$ is fixed, the fact that the parameters of our Beta do not depend on $l$ does not change the outcome. Therefore, following the proof of the convergence of the finite-dimensional projections from pages 2089--2091 of \cite{castillo2017}, the middle term can be made smaller than $\eta/3$, which concludes the proof of Theorem \ref{BVM}. \qed

\subsection{Proof of Theorem \ref{thmcs}}

Let us prove the confidence statement first. By definition $\cC=\cD\cap\{f:\, \|f\|_{\hat{\al},L}\le u_n \}$ for $\cD$ as in \eqref{csmw}. For Theorem \ref{BVM} to hold, it suffices that $w_{l_0(n)}/\sqrt{\log{n}} \to \infty$, which holds if one sets $w_l=\sqrt{l}(\log{l})$ and $l_0(n)=(\log{n})/\log\log{n}$. Combining Theorem \ref{BVM} and Theorem 5 in \cite{castillo2014} gives, as noted below the statement of Theorem \ref{BVM}, that $P_{f_0}[f_0\in\cD]\to 1-\ga$, so  it suffices to prove that $P_{f_0}[\|f_0\|_{\hat\al, L}\le u_n]\to 1$ uniformly over $\cH_{SS}(\al,M,\veps)\cap \cF(m_0,m_1)$. 
On the event whose probability is controlled in Lemma \ref{lemreg}, 
\[ \hat{L}\ge \cL+\log_2{c_1}=\frac{1}{1+2\al}\log_2\left(\frac{n}{\log{n}}\right)
+\log_2{c_1}.\]
This implies, by the definition of $\hat\al$, that for any $l\le d L$ (note that one can have $c_1<1$),
\[ l(\hat\al+1/2) = \frac{l}{2}\frac{\log_2(n/\log_2{n})}{\hat{L}}
\le \frac{l}{2}\frac{\log_2(n/\log_2{n})}{\frac{1}{1+2\al}\log_2(n/\log_2{n})+\log_2{c_1}}
\le l\left(\frac12+\al\right)+C\frac{l}{\log_2(n/\log_2{n})}
\]
and for any $l\le dL$ we have  $l\leqa \log_2{n}$, so the last term is less than $l(1/2+\al)+C$ for $l\le dL$, so that for large enough $n$, 
\[ \max_{l\le dL}\max_{0\le k<2^l}  2^{l(\hat\al+1/2)}|f_{0,lk}|\le C'R\le u_n,\]
using $f_0\in\cH_{SS}(\al,M,\veps)\subset \cH(\al,M)$ and $u_n\to\infty$. When $l> dL$, one notes that $|f_{0,lk}|\le M2^{-l(1/2+\al)}\le u_n 2^{-l(1/2+\al_0)}$ for large enough $n$, since $u_n\to\infty$ and $\al\ge \al_0$. From this one deduces that with probability tending to $1$, we have $P_{f_0}( \|f_0\|_{\hat\al,L}\le u_n)\to 1$, which shows the coverage statement.

We now turn to the diameter statement. For any $f,g\in \cC$ and $T_n$ as in 
\eqref{Tn}, 
\begin{align*}
\|f-g\|_\infty & \le \|f-T_n\|_\infty + \|T_n-g\|_\infty \le
2 \sup_{f\in\cC} \|f-T_n\|_\infty.
\end{align*}
In turn, for any $f\in\cC$, one can bound, using the definitions of $\|\cdot\|_{\hat\al,L}$ and $d=1/\al_0$,
\begin{align*}
\|f - & T_n\|_\infty 
 \le  \sum_{l\le \cL} 2^{l/2} \max_{0\le k<2^l} |f_{lk}-T_{n,lk}|
+  \sum_{l=\cL+1}^{dL} 2^{l/2} \max_{0\le k<2^l} |f_{lk}|
+  \sum_{l=dL+1}^\infty 2^{l/2} \max_{0\le k<2^l} |f_{lk}| 
\\
& \le \sum_{l\le \cL} 2^{l/2} w_l w_l^{-1}\max_{0\le k<2^l} |f_{lk}-T_{n,lk}|
+ \sum_{l=\cL+1}^{dL} 2^{-l\hat\al} \max_{0\le k<2^l} 2^{l(1/2+\hat\al)}|f_{lk}| \\
& \ \ \ + \sum_{l=dL+1}^\infty 2^{- l\al_0} \max_{0\le k<2^l} 2^{l(1/2+\al_0)} |f_{lk}| \\
& \le w_{\cL}2^{\cL/2} \|f-C_n\|_{\cM_0(w)} + 2^{-\cL\hat\al} \|f\|_{\hat\al,L}  
+ 2^{-\al_0 d L}  \|f\|_{\hat\al,L} \\
& \le \frac{w_{\cL}}{\sqrt{\cL}}\sqrt{\frac{\cL2^{\cL}}{n}}R_n + 
\left[ 2^{-\cL\hat\al} + 2^{-L} \right] u_n,
\end{align*}
where the last line uses the fact that $f\in\cC$ and the definition of $\cC$, together with the fact that $T_{n,lk}$ and $C_{n,lk}$ coincide for $l\le \cL$. To handle the last term in the previous display, one notes that, proceeding similarly as above (where the upper bound is obtained), for some real constants $c,C$, and any $l\le L$, 
\begin{equation}\label{compahat}
 l(\al+1/2) + c\le l(\hat\al+1/2) \le l(\al+1/2) + C
\end{equation}
from which one deduces that $2^{-\cL\hat\al}\leqa 2^{-\cL\al}$. Also, $2^{-L}u_n\leqa (u_n \log^{2}n)/n\leqa 2^{-\cL\al}$ using $u_n\le \log{n}$. By taking $(w_l/\sqrt{l})$ to be increasing and choosing $w$ in such a way that $w_{L}/\sqrt{L}\le u_n$, one has $w_{\cL}/\sqrt{\cL}\le w_L/\sqrt{L}\le u_n$. These conditions are satisfied for the choice $w_l=\sqrt{l}(\log{l})$ as before, as long as $u_n\ge \log{L}$ which holds if $u_n\ge\log\log{n}$.  One deduces that $\|f-T_n\|_\infty\leqa u_n \veps_{n,\al}(1+R_n)$ which, as $R_n=O_P(1)$ as noted below \eqref{csmw}, gives the desired diameter bound.

Finally, we prove that the considered set has the prescribed credibility. As by definition the first intersecting set $\cD$ in the definition of $\cC$ has credibility  (at least) $1-\ga$, it is enough to prove that $\Pi[\|f\|_{\hat\al,L}\le u_n\given X]$ goes to $1$ in $P_{f_0}$--probability.  This is the same as proving a convergence `rate' (which does not go to $0$ though) of order $u_n$ for the multiscale norm $\|\cdot\|_{\hat\al,L}$. To do so, one proceeds as for the supremum norm result obtained in Theorem \ref{adapt}. In the proof of that result, the following has been derived (for a slightly different prior, see also below the next display): on the event $\mathbb{A}$ from the proof of Theorem 2, whose credibility goes to $1$ in probability, we have 
\[ \max_{l\le \cL,\ k} |f_{0,lk}-f_{lk}|\le \bar{\ga}\sqrt{\log{n}/n} \quad \text{and}\quad  f_{lk}=0\  (l>\cL). \]
We note that the same bounds (up to possibly different constants) hold for the prior \eqref{priorbvm1}--\eqref{priorbvm2}. Indeed, in the regime $l\le l_0(n)$ those bounds are obtained as in the proof of Theorem 1 in \cite{castillo2017} (the only difference is that the prior here is a fixed Beta$(a,a)$ distribution instead of Beta$(a_l,a_l)$ in \cite{castillo2017}, but this only changes the constants involved in the statement in the regime $l\le l_0(n)$), while for $l_0(n)\le l\le \cL$ the result is obtained along the proof of Theorem \ref{adapt} in Lemma \ref{Lcomp}. 
This implies that, for $f$ in the set $\mathbb{A}$ as before, as $f_{lk}=0$ for $l> \cL$ on that set,
\begin{align*}
  &\|f\|_{\hat\al,L}\1_{\mathbb{A}}   \le  
  \max_{l\le \cL} \max_{0\le k<2^l } 2^{l (\hat\al+ 1/2)}
   |f_{lk}| \1_{\mathbb{A}} + 0 \\
  & \leqa \max_{l\le \cL} \max_{0\le k<2^l } 2^{l (\al+ 1/2)}
   |f_{lk}| \1_{\mathbb{A}},
\end{align*}  
using $l\hat\al\le l\al+C$ on the event from Lemma \ref{lemreg} as used also at the beginning on the proof. For $l\le \cL$,
\begin{align*}
2^{l (\al+ 1/2)}
   |f_{lk}| \1_{\mathbb{A}} & \le 2^{l (\al+ 1/2)}
   \left(|f_{lk}-f_{0,lk}|+|f_{0,lk}|\right)\1_{\mathbb{A}}\\
& \leqa 2^{\cL (\al+ 1/2)} \sqrt{\frac{\cL}{n}} + \max_{l\le L} \left( 2^{l (\al+ 1/2)} |f_{0,lk}|\right)\\
& \leqa \sqrt{\frac{n}{\log{n}}}\sqrt{\frac{\cL}{n}} + M,
\end{align*}
which is less than a constant, and hence smaller than $u_n\to\infty$ for large enough $n$. This shows the credibility claim and concludes the proof, up to the last claim: when $(u_n)$ is an arbitrary diverging sequence, one can reproduce the previous arguments as long as one can verify the conditions that: $(w_l/\sqrt{l})$ is nondecreasing and $w_{l_0(n)}/\sqrt{\log{n}} \to \infty$ as well as $w_{L}/\sqrt{L} \le u_n$. To do so, let us set $w_l=\sqrt{l}u_l$. 
Then $(w_l/\sqrt{l})$ is nondecreasing and $w_{L}/\sqrt{L} = u_L\le u_n$ by monotonicity of $(u_n)$. Finally, if one sets $l_0(n)=(\log{n})/u_{\sqrt{\log{n}}}$, we have $w_{l_0(n)}/\sqrt{\log{n}}=u_{l_0(n)}/\sqrt{u_{\sqrt{\log{n}}}}.$ But since $u_{\sqrt{\log{n}}}\le u_n\le \log\log{n}$ by assumption, we have $l_0(n)>\sqrt{\log{n}}\to\infty$ so that $u_{\sqrt{\log{n}}}\le u_{l_0(n)}$ and $w_{l_0(n)}/\sqrt{\log{n}}\ge \sqrt{u_{l_0(n)}}\to\infty$, which concludes the proof of Theorem \ref{thmcs}.

\section{Appendix}

\subsection{Control of population versions}


%

\begin{lem}\label{lemB}
Let $f_0$ be a density bounded with $0<m_0\le f\le m_1<\infty$ on $[0,1]$. Let $\cB$ be the event defined in \eqref{B} and $L$ be as in \eqref{ellmax}. For $n\ge N_0=N_0(m_0)$, and any $M>0$ in the definition of  $\mathcal{B}$ in \eqref{B} such that $M^2\ge 8m_1$, we have
$$P_{f_0}(\mathcal{B}^c)\le 4\exp\left\{- \frac{M^2}{4m_1} L\right\}.$$
\end{lem}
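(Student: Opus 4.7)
My plan is to bound $P_{f_0}(\cB^c)$ by a union bound over the $(l,k)$ pairs and apply Bernstein's inequality to each individual count.

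First I would fix $l \le L$ and $0 \le k < 2^l$, and observe that $N_X(I_k^l) = \sum_{i=1}^n \ind_{X_i \in I_k^l}$ is a sum of $n$ i.i.d.\ Bernoulli variables with parameter $p_{lk} = F_0(I_k^l) = \int_{I_k^l} f_0 \le m_1 2^{-l}$, and with variance $p_{lk}(1-p_{lk}) \le m_1 2^{-l}$. Applying Bernstein's inequality with $t = M\Lambda_n(l) = M\sqrt{(l+L) n/2^l}$ and variance bound $v \le n m_1 2^{-l}$ yields
\[
P_{f_0}\bigl(|N_X(I_k^l) - nF_0(I_k^l)| > M \Lambda_n(l)\bigr) \le 2 \exp\left(-\frac{t^2/2}{v + t/3}\right).
\]

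The key technical point is to verify that we are in the sub-Gaussian regime of Bernstein, i.e.\ that $t/3 \le v$. This amounts to $M\sqrt{(l+L)n/2^l}/3 \le n m_1 /2^l$, or equivalently $M^2(l+L) 2^l \le 9 n m_1^2$. Using the definition \eqref{ellmax} of $L$, one has $2^l \le 2^L \le n/L^2$ and $l+L \le 2L$, so $M^2(l+L) 2^l \le 2M^2 n /L$, which is at most $9 n m_1^2$ as soon as $L \ge 2 M^2 /(9 m_1^2)$; this is where the condition $n \ge N_0$ enters (for $n$ large enough, $L$ is arbitrarily large). In this regime the exponent in Bernstein becomes at most $-t^2/(4v) \le -M^2(l+L)/(4 m_1)$.

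I then union-bound over the $2^l$ choices of $k$ at each level $l$ and sum over $l \in \{0,\ldots,L\}$:
\[
P_{f_0}(\cB^c) \le \sum_{l=0}^{L} 2 \cdot 2^l \exp\!\left(-\frac{M^2(l+L)}{4m_1}\right) = 2 \exp\!\left(-\frac{M^2 L}{4m_1}\right) \sum_{l=0}^{L} \left(2 e^{-M^2/(4m_1)}\right)^l.
\]
The assumption $M^2 \ge 8 m_1$ gives $2 e^{-M^2/(4m_1)} \le 2 e^{-2} < 1/2$, so the geometric series is bounded by $1/(1 - 2 e^{-2}) < 2$, yielding the stated bound $4 \exp(-M^2 L/(4 m_1))$. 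The only nontrivial step is the sub-Gaussian check above; everything else is routine.
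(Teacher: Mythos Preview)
Your argument follows the same route as the paper's proof---Bernstein's inequality for each cell, then a union bound and a geometric sum over levels---and the final bound is correct. There is, however, a small logical slip in the ``sub-Gaussian'' step. You say you want $t/3 \le v$ (with $v$ the true variance sum $np_{lk}(1-p_{lk})$) so that the Bernstein exponent can be bounded by $-t^2/(4v)$, but what you actually verify is $t/3 \le n m_1 2^{-l}$, which is only an \emph{upper} bound on $v$; from $v \le n m_1 2^{-l}$ and $t/3 \le n m_1 2^{-l}$ one cannot conclude $t/3 \le v$.

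The fix is immediate: skip the intermediate $-t^2/(4v)$ and bound directly
\[
v + \tfrac{t}{3} \le n m_1 2^{-l} + n m_1 2^{-l} = 2 n m_1 2^{-l},
\]
which yields the exponent $-t^2/(4 n m_1 2^{-l}) = -M^2(l+L)/(4m_1)$ as desired. Alternatively---and this is what the paper does---replace $v$ by the larger quantity $nF_0(I_k^l)$ in the Bernstein denominator from the outset, and then verify $t/3 \le nF_0(I_k^l)$ via the \emph{lower} bound $F_0(I_k^l)\ge m_0 2^{-l}$ (this is also where the dependence of $N_0$ on $m_0$ comes from in the statement). Either way the rest of your computation goes through unchanged.
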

\begin{proof}
The proof is similar to that of Lemma 4 in \cite{castillo2017}, where a stronger control for all $l\ge 0$ is derived: here we only need to control levels $l\le L$ as further levels are truncated in the prior distribution. The difference is that here a precise control of the decrease to $0$ of the probability is needed.  
We briefly re-sketch the argument for completeness: Bernstein's inequality ensures that for $t>0$,
\[ P_{f_0}\left[|N_X(I_k^l) -nF_0(I_k^l)|>t \right]\le 2\exp\left(-\frac{t^2}{2
\left\{ n F_0(I_k^l)+t/3 \right\} }\right). \]
Let $t:=M\La_n(l)$, with $\La_n(l)$ as in \eqref{LAMBDAn}. Then by definition of $\La_n(l)$, as $nF_0(I_k^l)\ge m_0n2^{-l}$, we have for large enough $n$, for all $l\le L$,
\[ t/3 = M\sqrt{(l+L)n/2^l}/3 \le nF_0(I_k^l),  \]
using that $L2^L/n=o(1)$ by definition of $L$ in \eqref{ellmax}. Deduce, with $F_0(I_k^l)\ge m_02^{-l}$, 
\[ P_{f_0}\left[|N_X(I_k^l) -nF_0(I_k^l)|>t \right]\le 2\exp\left(-\frac{t^2}{4
n F_0(I_k^l)  }\right)\le 2\exp\left(\frac{M^2(l+L)}{4m_1}\right). \]
A union bound then gives, setting $\ta=\ta(M)=M^2/(4m_1)$,
\[ P_{f_0}(\mathcal{B}^c)\le 2 \sum_{l\le L}\sum_{0\le k<2^l} e^{-\ta (l+L)}\le 
2e^{-\ta L} \sum_{l\le L} e^{(\log{2}-\ta)l}, \]
which is smaller than $4e^{-\ta L}$ as soon as $\ta\ge 2$, which happens if $M^2\ge 8m_1$.
\end{proof}

\begin{lem}\label{lemnx}
Let  $L$ be as in  \eqref{ellmax} and let $\mathcal{B}$ be the event defined in \eqref{B}. There exists $C_0=C_0(M)>0$ such that for any binary word $\veps\in\cE$ of size $|\veps|=l\le L$, on the event $\cB$,
\[ \left|\frac{N_X(I_{\veps0})+a}{y_{\veps0}(N_X(I_\veps)+2a)}-1\right|
\le C_0 \left(\frac{2^l}{n}2^{l/2}|f_{0,lk}|+\sqrt{\frac{L2^l}{n}}\right). \]
\end{lem}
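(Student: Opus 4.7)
The plan is to turn the difference into a single fraction and bound the numerator and denominator separately, exploiting the identity $y_{\veps0}\, n F_0(I_\veps) = n F_0(I_{\veps0})$ together with the definition \eqref{folk}, which gives $1-2y_{\veps0} = f_{0,lk}/(2^{l/2} p_{0,\veps})$.

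First, I would rewrite the numerator
\[
N_X(I_{\veps0}) + a - y_{\veps0}\bigl(N_X(I_\veps) + 2a\bigr)
= \bigl[N_X(I_{\veps0}) - nF_0(I_{\veps0})\bigr] - y_{\veps0}\bigl[N_X(I_\veps) - nF_0(I_\veps)\bigr] + a(1-2y_{\veps0}).
\]
On the event $\cB$, the two empirical deviations are controlled by $M\Lambda_n(l+1)$ and $M\Lambda_n(l)$ respectively; since $y_{\veps0}\in[0,1]$ and $(l+1+L)/(l+L)\le 2$, both contributions combine to $\lesssim \sqrt{(l+L)n/2^l}\lesssim \sqrt{Ln/2^l}$. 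For the last term, applying \eqref{folk} and the density bound $p_{0,\veps}\ge m_0 2^{-l}$ gives $|a(1-2y_{\veps0})| \le (a/m_0)\, 2^{l/2}|f_{0,lk}|$.

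Second, for the denominator $y_{\veps0}(N_X(I_\veps)+2a)$: using $F_0(I_{\veps0})\ge m_0 2^{-l-1}$ and $F_0(I_\veps)\le m_1 2^{-l}$ yields $y_{\veps0}\ge m_0/(2m_1)$, a dimension--free lower bound. Next, on $\cB$,
\[
N_X(I_\veps) \ge nF_0(I_\veps) - M\Lambda_n(l) \ge m_0 n 2^{-l} - M\sqrt{2Ln/2^l}.
\]
Since $l\le L$ and $2^L L^2\le n$ by \eqref{ellmax}, one has $\sqrt{Ln/2^l}=o(n2^{-l})$, so for $n$ large enough the dominant term is $m_0 n 2^{-l}/2$; hence the denominator is $\gtrsim n2^{-l}$, with the implicit constant depending only on $m_0, m_1$.

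Combining, the ratio is bounded above (up to a constant depending on $M, a, m_0, m_1$) by
\[
\frac{\sqrt{Ln/2^l} + 2^{l/2}|f_{0,lk}|}{n2^{-l}} \;=\; \sqrt{\frac{L2^l}{n}} + \frac{2^l}{n}\, 2^{l/2}|f_{0,lk}|,
\]
which is exactly the desired bound. The main (mild) obstacle is to ensure that the deterministic remainders from $\cB$ do not overwhelm the leading population quantities in the denominator; this is where the specific choice $2^L L^2\le n$ in \eqref{ellmax} is used, guaranteeing uniformly in $l\le L$ that $N_X(I_\veps) \asymp n F_0(I_\veps)$.
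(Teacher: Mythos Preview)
Your proof is correct and follows essentially the same route as the paper: both exploit the cancellation $y_{\veps0}\, nF_0(I_\veps)=nF_0(I_{\veps0})$, control the fluctuations $N_X(I_\cdot)-nF_0(I_\cdot)$ via the event $\cB$, and use $L2^L=o(n)$ from \eqref{ellmax} to keep the denominator of order $n2^{-l}$. The only cosmetic difference is that the paper first factors the ratio as $(1+u)/(1+v)$ with $u=n^{-1}(a+\delta_{\veps0})/F_0(I_{\veps0})$ and $v=n^{-1}(2a+\delta_\veps)/F_0(I_\veps)$ before bounding, whereas you put everything over a common denominator directly; the resulting bounds and constants are the same.
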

\begin{proof}
The proof is similar to the proof of Lemma 2 in \cite{castillo2017}. 
We give it below for completeness.  
 One can write $N_X(I_\veps)=nF_0(I_\veps)+\delta_{\veps}$ where $\delta_{\veps}$ is controlled on the event $\cB$ using Lemma \ref{lemB}. Then,
\[ \frac{N_X(I_{\veps0})+a}{y_{\veps0}(N_X(I_\veps)+2a)}-1 = 
\frac{1 + n^{-1}(a+\delta_{\veps0})/F_0(I_{\veps0})}{
1 + n^{-1}(2a+\delta_\veps)/F_0(I_\veps) }. \]
By definition of $\cB$, for $l\le L$ we have $|\delta_{\veps}|\le C\sqrt{nL 2^{-l}}$. Since $L2^{L}=o(n)$ by \eqref{ellmax}, this bound is always of smaller order than $n 2^{-l}\leqa nF_0(I_\veps)$, since $f_0$ is bounded away from $0$. So the denominator in the last display is bounded away from $0$, from which we deduce
\begin{align*}
\left|\frac{N_X(I_{\veps0})+a}{y_{\veps0}(N_X(I_\veps)+2a)}-1\right|
 & \le  C\frac{a}{n}\left|\frac{1}{F_0(I_{\veps0})}-\frac{2}{F_0(I_{\veps})}\right|
 + \frac{C}{n} \Big(\frac{|\delta_{\veps}|}{F_0(I_\veps)}+\frac{|\delta_{\veps0}|}{F_0(I_{\veps0})}\Big)\\
& \le C \frac{ 2^{2l} a}{n} 2^{-l/2}|f_{0,lk}| + C\frac{ 2^{l/2}(nL)^{1/2}}{n},
\end{align*}
on the event $\cB$, where we have used  $|\delta_{\veps}|+|\delta_{\veps0}|\leq C(nL2^{-l})^{1/2}$.
\end{proof}

\subsection{Posterior mean quantities}

Let us recall that $y_{\veps0}, p_{0,\veps}$ are defined in \eqref{pyeps} and that $\bar{Y}_{\veps0}, \bar{p}_\veps$ are defined below \eqref{fbarlk}.
\begin{lem}\label{L2}
Let  $L$ be as in  \eqref{ellmax} and let $\mathcal{B}$ be the event defined in \eqref{B}. Then there exists $C>0$ such that for any $\veps \in \mathcal{E}$ such that $|\veps|=l\le L$, on the event $\mathcal{B}$,  
\begin{align*}
\left|\bar{Y}_{\veps0}-y_{\veps0}\right| & \le C\frac{2^{\frac{l}{2}}}{n}\left(2^l|f_{0,lk}|+\sqrt{nL}\right),\\
 \left|\frac{\bar{p}_\veps}{p_{0,\veps}}-1\right|& \leq C\left(\frac{2^l}{n}+\sqrt{\frac{L 2^{l}}{n}}\right).
 \end{align*}
\end{lem}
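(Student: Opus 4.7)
For the first inequality, expression \eqref{yepsbar} yields the algebraic decomposition
\[
\bar{Y}_{\veps0}-y_{\veps0}=(1-\tilde\pi_{\veps0})\left(\tfrac{1}{2}-y_{\veps0}\right)+\tilde\pi_{\veps0}\left(\frac{\al_{\veps0}(X)}{\al_{\veps0}(X)+\al_{\veps1}(X)}-y_{\veps0}\right),
\]
so it suffices to bound the ``slab'' and ``spike'' contributions separately. The slab term is immediate: multiplying the conclusion of Lemma \ref{lemnx} by $y_{\veps0}\le 1$ and using $\tilde\pi_{\veps0}\le 1$ already produces an upper bound of order $C(2^{3l/2}|f_{0,lk}|/n+\sqrt{L2^l/n})$. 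For the spike term, identity \eqref{folk} together with $p_{0,\veps}\ge m_0 2^{-l}$ gives $|1/2-y_{\veps0}|=|f_{0,lk}|/(2\cdot 2^{l/2}p_{0,\veps})\leq C2^{l/2}|f_{0,lk}|$. When $|f_{0,lk}|\leqa \sqrt{L/n}$, the trivial bound $(1-\tilde\pi_{\veps0})\le 1$ combined with this estimate already produces a term of order $\sqrt{L2^l/n}$. When instead $|f_{0,lk}|^2\geqa L/n$, one must show that $\tilde\pi_{\veps0}$ is close to $1$ by a Laplace/Stirling analysis of the Bayes factor $T_X$ from \eqref{tix}: up to logarithmic corrections, $\log T_X$ equals $N_X(I_\veps)\cdot \kl(\widehat{y}_{\veps0}\,\|\,1/2)$ with $\widehat{y}_{\veps0}=N_X(I_{\veps0})/N_X(I_\veps)$, which on $\cB$ is close to $y_{\veps0}$ by Lemma \ref{lemnx}. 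Since this KL divergence is of order $(y_{\veps0}-1/2)^2$ and $N_X(I_\veps)\asymp n2^{-l}$, one obtains $\log T_X\geqa n f_{0,lk}^2\geqa L$, forcing $(1-\tilde\pi_{\veps0})$ to decay polynomially fast in $n$, enough to absorb the remaining factor $2^{l/2}|f_{0,lk}|$.

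For the second inequality, we use the multiplicative structure $\bar p_\veps/p_{0,\veps}=\prod_{i=1}^{l}\bar Y^{[i]}_\veps/y^{[i]}_\veps$ inherited from \eqref{DEF} applied to both $\bar P$ and $P_0$. Since $f_0\in\cF(m_0,m_1)$, each $y^{[i]}_\veps$ is bounded below by a positive constant depending only on $m_0,m_1$, so $|\bar Y^{[i]}_\veps/y^{[i]}_\veps-1|\leqa |\bar Y^{[i]}_\veps-y^{[i]}_\veps|$, and the first inequality applied at depth $i$ controls the latter. Summing over $i\leq l$ via $|\log(1+x)-x|\leq x^2$ for small $x$ and using the universal bound $|f_{0,lk}|\leq m_1 2^{-l/2}$ (which follows from $f_{0,lk}=\langle f_0,\psi_{lk}\rangle$ and $\|f_0\|_\infty\le m_1$), both pieces of the first inequality sum geometrically to give $O(2^l/n)$ and $O(\sqrt{L2^l/n})$ respectively, yielding the announced bound.

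The main obstacle is the case split for the spike term in the first inequality: the crude bound $(1-\tilde\pi_{\veps0})\le 1$ fails precisely in the regime of moderate to large wavelet coefficients, and one must turn the Laplace/Stirling analysis of $T_X$ into a quantitative estimate ensuring that $(1-\tilde\pi_{\veps0})$ is sufficiently small, uniformly over all $\veps$ with $|\veps|\le L$. Once this is done, the second inequality is essentially a corollary of the first via telescoping and a geometric sum, so the spike analysis is the only nontrivial step.
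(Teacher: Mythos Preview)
Your proposal is correct and follows essentially the same route as the paper. The paper packages your ``Laplace/Stirling analysis of $T_X$'' into a separate lemma (Lemma~\ref{lempw}), which yields the dichotomy
\[
1-\tilde\pi_{\veps0}\le \1_{|y_{\veps0}-1/2|\le \Delta_l}+\tfrac{C}{n}\1_{|y_{\veps0}-1/2|>\Delta_l},
\]
and then bounds $(1-\tilde\pi_{\veps0})|1/(2y_{\veps0})-1|$ directly from this; your case split on $|f_{0,lk}|$ versus $\sqrt{L/n}$ is precisely this dichotomy rewritten in terms of the wavelet coefficient via \eqref{folk}. For the second inequality the paper invokes Lemma~\ref{lemprodsum} (a packaged product-to-sum estimate) and uses the H\"older bound $|f_{0,i,k}|\le R\,2^{-i(\alpha+1/2)}$, whereas you use the cruder bound $|f_{0,lk}|\le m_1 2^{-l/2}$ from $\|f_0\|_\infty\le m_1$; both choices sum geometrically to the same $2^l/n$, and yours has the minor advantage of not invoking the H\"older hypothesis, which is indeed not part of the lemma's statement.
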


\begin{proof}
Recalling $y_{\veps0}=F_0(I_{\veps0})/F_0(I_{\veps})$ and using \eqref{yepsbar} with $\alpha_\veps=a$,
 \[ \left|\frac{\bar{Y}_{\veps0}}{y_{\veps0}}-1\right| \leq (1-\tilde{\pi}_{\veps0})\left|\frac{1}{2y_{\veps0}}-1\right|+\tilde{\pi}_{\veps0}\left|\frac{N_X(I_{\veps0})+a}{y_{\veps0}(N_X(I_\veps)+2a)}-1\right|.\]
 The first term in the last display is bounded by, using the second bound on the posterior weight given in Lemma \ref{lempw}, and writing $\Delta_l=(C_0L2^l/n)^{1/2}$ as in \eqref{del},
 \begin{align*}
  (1-\tilde{\pi}_{\veps0})\left|\frac{1}{2y_{\veps0}}-1\right| 
 & \le \frac{\Delta_l}{y_{\veps0}}\1_{|y_{\veps0}-1/2|\le \Delta_l} + \frac{C}{n}\frac1{y_{\veps0}}\\
& \le  \sqrt{\frac{L2^l}{n}}+\frac{1}{n},
 \end{align*}
  while the second term is bounded using  $\tilde\pi_{\veps0}\le 1$, and Lemma \ref{lemnx},  which leads to the first inequality. 

With $y_i=F_0(I^{[i]}_\veps)/F_0(I^{[i-1]}_\veps)$, we have  $p_{0,\veps}=\prod_{i=1}^l y_i$ and $\bar{p}_\veps=\prod_{i=1}^l w_i$, where, using \eqref{yepsbar}, 
\[ w_i=\bar{Y}_{\veps_1\ldots\veps_i}=\frac{1}{2}(1-\tilde{\pi}^{[i]}_{\veps})+\tilde{\pi}^{[i]}_{\veps}\frac{N_X(I^{[i]}_\veps)+a}{N_X(I^{[i-1]}_\veps)+2a}.\]
Now using the first display of the lemma, and noticing that a similar bound holds for $\bar{Y}_{\veps1}$, we obtain, using  that $f_0\in\cH(\al,R)$,
\[ \left|\frac{w_i}{y_i}-1\right| \le  C\frac{2^{i/2}}{n}\left(2^{i/2-i\al}+\sqrt{nL}\right). \]
An application of Lemma \ref{lemprodsum} (whose conditions are satisfied, as $2^{-i\al}\le 1$) leads to the second inequality of the lemma.
\end{proof}

\subsection{The posterior distribution around its mean}

\begin{lem}\label{L4}
Let  $L$ be as in  \eqref{ellmax} and let $\mathcal{B}$ be the event defined in \eqref{B}. For any $l\le L$ and any $\veps \in \mathcal{E}$ with $|\veps|= l$, let us write, for some $\cC>0$,
 \[B_\veps=4\frac{\sqrt{\cC L}}{\sqrt{nF_0(I_\veps)}}+\frac{4}{nF_0(I_\veps)}.\] Then, on the event $\mathcal{B}$, there exists $D>0$ such that, for $B_\veps, \cC$ as above, for any $\veps$ with $|\veps|=l\le L$,
 \[\Pi(|Y_{\veps0}-\bar{Y}_{\veps0}|>B_\veps|X)
 \le D e^{-\cC L}.\]
\end{lem}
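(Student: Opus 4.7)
The plan is to exploit the explicit spike-and-slab form of the marginal posterior of $Y_{\veps0}$ in \eqref{posty}. Setting $\mu := \al_{\veps0}(X)/(\al_{\veps0}(X)+\al_{\veps1}(X))$, one has the decomposition
\[ \Pi(|Y_{\veps0}-\bar{Y}_{\veps0}|>B_\veps\given X) = (1-\tilde\pi_{\veps0})\,\1_{\{|\frac{1}{2}-\bar{Y}_{\veps0}|>B_\veps\}} + \tilde\pi_{\veps0}\,Q_X(|Y-\bar{Y}_{\veps0}|>B_\veps), \]
where $Q_X$ denotes the $\bet(\al_{\veps0}(X),\al_{\veps1}(X))$ law, together with the identities $\bar{Y}_{\veps0}-\frac{1}{2}=\tilde\pi_{\veps0}(\mu-\frac{1}{2})$ and $\bar{Y}_{\veps0}-\mu=(1-\tilde\pi_{\veps0})(\frac{1}{2}-\mu)$ coming from \eqref{yepsbar}. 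The strategy is to treat the spike and the slab summands separately, calling on Lemma \ref{lempw} to control the posterior weight $1-\tilde\pi_{\veps0}$ and on a sub-Gaussian type concentration inequality for the Beta to control $Q_X$.

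For the spike summand, the event $\{|\frac{1}{2}-\bar{Y}_{\veps0}|>B_\veps\}$ forces $|\mu-\frac{1}{2}|>B_\veps$ since $\tilde\pi_{\veps0}\in[0,1]$. On $\cB$, Lemma \ref{lemnx} yields $|\mu-y_{\veps0}|$ of order $\Delta_l=\sqrt{C_0 L 2^l/n}$ plus a lower-order term, while $F_0(I_\veps)\le 2^{-l}/m_0$ gives $B_\veps\geqa \sqrt{\cC L 2^l/n}$; choosing $\cC$ large enough compared to $C_0$ thus forces $|y_{\veps0}-\frac{1}{2}|\ge B_\veps/2>\Delta_l$ on this event, and Lemma \ref{lempw} in the corresponding regime then delivers $1-\tilde\pi_{\veps0}\le D e^{-\cC L}$ on $\cB$.

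For the slab summand, the triangle inequality gives $\{|Y-\bar{Y}_{\veps0}|>B_\veps\}\subset\{|Y-\mu|>B_\veps/2\}\cup\{|\mu-\bar{Y}_{\veps0}|>B_\veps/2\}$, and the identity $|\mu-\bar{Y}_{\veps0}|=(1-\tilde\pi_{\veps0})|\frac{1}{2}-\mu|$ combined with the same case distinction as above (trivially small when $|\frac{1}{2}-\mu|\le B_\veps/2$, exponentially small otherwise by Lemma \ref{lempw}) shows $|\mu-\bar{Y}_{\veps0}|\le B_\veps/2$ on $\cB$ for $n$ large, so only the Beta tail $Q_X(|Y-\mu|>B_\veps/2)$ remains to bound. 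Since $\mu$ is bounded away from $0$ and $1$ on $\cB$ (as $f_0\in\cF(m_0,m_1)$ forces $y_{\veps0}$ bounded away from the endpoints and $\mu\approx y_{\veps0}$ by Lemma \ref{lemnx}), a standard Bernstein-type concentration inequality for $\bet(\al_{\veps0}(X),\al_{\veps1}(X))$ of the form
\[ Q_X(|Y-\mu|>t)\leq 2\exp\!\Big(-\frac{c(\al_{\veps0}(X)+\al_{\veps1}(X))\,t^2}{\mu(1-\mu)+t}\Big), \]
which one may derive through a Chernoff-type argument via the representation $Y=G_1/(G_1+G_2)$ with independent Gamma variables, will give, once $t=B_\veps/2$ is plugged in and $\al_{\veps0}(X)+\al_{\veps1}(X)=N_X(I_\veps)+2a\geqa nF_0(I_\veps)$ is used on $\cB$, an exponent of order $\cC L$; hence $Q_X(|Y-\mu|>B_\veps/2)\le 2e^{-c'\cC L}$. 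Combining the two contributions and redefining $D$ concludes the proof.

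The main technical obstacle will be obtaining the Beta concentration at precisely the scale $B_\veps$: both additive terms in $B_\veps$ are needed, the $\sqrt{\cC L/(nF_0(I_\veps))}$ term driving the quadratic regime of the Bernstein exponent while the $1/(nF_0(I_\veps))$ term secures the linear regime and keeps the exponent linear in $\cC L$. A secondary delicate point is aligning the thresholds in Lemma \ref{lempw} with $B_\veps$, so that the spike weight and the Beta tail both decay at the same rate $e^{-\cC L}$ uniformly over $\veps$ with $|\veps|\le L$.
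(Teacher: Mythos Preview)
Your route is correct but genuinely different from the paper's. The paper does not split into spike and slab at all: it represents the posterior variable as $\cY_X=(1-W_X)\tfrac12+W_X Z_X$ with $W_X\sim\bel(\tilde\pi_{\veps0})$ independent of $Z_X\sim\bet(\al_{\veps0}(X),\al_{\veps1}(X))$, asserts the comparison inequality $P[|Z_X-E[Z_X\given X]|\le B]\le P[|\cY_X-E[\cY_X\given X]|\le B]$ for all $B>0$, and then reduces everything to a single application of the Beta tail bound in Lemma~\ref{lembeta} with $x\asymp\sqrt{\cC L}$. Thus Lemma~\ref{lempw} is never invoked; the paper's argument is agnostic to the size of the posterior spike weight. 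Your direct decomposition is longer but makes every step quantitatively explicit, and does not rest on that mixture-versus-component comparison step. One point to tighten: citing Lemma~\ref{lempw} as stated only yields $1-\tilde\pi_{\veps0}\le C_3/n$ on $\{|y_{\veps0}-\tfrac12|>\Delta_l\}$, which is of order $e^{-(\log 2)L}$, not $e^{-\cC L}$ for general $\cC$; to match the $\cC$-dependent target you need to go back to the estimate inside the proof of Lemma~\ref{lempw} (where the exponent in fact scales like $nF_0(I_\veps)(y_{\veps0}-\tfrac12)^2$) and feed in the stronger lower bound $|y_{\veps0}-\tfrac12|\gtrsim B_\veps\asymp\sqrt{\cC L/(nF_0(I_\veps))}$ that your case analysis actually delivers. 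Minor points: the inequality should read $F_0(I_\veps)\le m_1 2^{-l}$, and $\cC$ is a given constant in the statement rather than one you may choose.
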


\begin{proof}
Let $Z_X$ be a random variable sampled, given $X$, from a $\bet (\al_{\veps0}(X),\al_{\veps1}(X))$ distribution. The random variable $\tilde{Y}_{\veps0}$ sampled from the posterior distribution of $Y_{\veps0}$ has, by Theorem \ref{thmconj}, the same distribution as $\cY_X=(1-W_X)(1/2)+ W_XZ_X$, where $W_X$ has distribution $\bel(\tilde{\pi}_{\veps0})$, given $X$ and independently of $Z_X$. This implies that, for any $B>0$,
\[ P[|Z_X-E[Z_X\given X]|\le B] \le P[|\cY_X-E[\cY_X\given X]|\le B]=\Pi[|Y_{\veps0}- \bar{Y}_{\veps0}|\le B|X]. \]
To conclude it is enough to control $P[|Z_X-E[Z_X\given X]|> B]$. This is done by applying Lemma \ref{lembeta} to $Z_X\sim \bet (\al_{\veps0}(X),\al_{\veps1}(X))$ with $\phi=\al_{\veps0}(X)$, $\psi=\al_{\veps1}(X)$, so that $\phi+\psi=2a+N_X(I_\veps)$. On  the event $\cB$, we have $N_X(I_\veps)=nF_0(I_\veps)+\delta_\veps$, with $|\delta_\veps|\le M\sqrt{2Ln2^{-l}}$.  By definition of $L$ in \eqref{ellmax}, one obtains $\delta_\veps=o(nF_0(I_\veps))$ (uniformly in $\veps$). Note that, as $nF_0(I_\veps)\geqa n2^{-l}\ge L^2$ goes to infinity and as $f_0$ is bounded away from $0$ and infinity, the conditions of Lemma \ref{lembeta} are satisfied for large enough $n$. Also, on the event $\cB$,
\[ nF_0(I_\veps)/2 \le 2a+N_X(I_\veps) \le 2nF_0(I_\veps). \]
An application of Lemma \ref{lembeta} with $x=\sqrt{L}$ gives the result. 
\end{proof}

\begin{lem} \label{lemeva}
Let $\cB$ be the event defined in \eqref{B}. Let  $\cA=\cA(\cC)$ be the set of histogram densities of $[0,1]$, encoded by the collection of variables $(Y_{\veps})_{|\veps|\le L}$, as follows 
\begin{equation} \label{mcalA}
\mathcal{A}=\bigcap_{\veps:\ |\veps|\le L} \left\{ | Y_\veps - \bar{Y}_\veps|\leq r_\veps\right\},\quad  \text{ with }r_\veps=8\sqrt{\frac{\cC L}{nF_0(I_\veps)}},
\end{equation}
for some $\cC>0$. Then $\Pi[\cA^c\given X] \leqa e^{(\log(2)-\cC)L}$.
\end{lem}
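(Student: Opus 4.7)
The strategy is straightforward: the per-coefficient tail bound is already provided by Lemma \ref{L4}, so all that remains is (i) to check that the radius $r_\veps$ in \eqref{mcalA} dominates the radius $B_\veps$ appearing in Lemma \ref{L4}, and (ii) to union-bound over the $O(2^L)$ indices $\veps$ with $|\veps|\le L$.

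First I would reduce to controlling only the variables $Y_{\veps 0}$. By the tree convention \eqref{conservative} one has $Y_{\veps 1}=1-Y_{\veps 0}$, and by \eqref{yepsbar} the same identity holds for the posterior means $\bar{Y}_{\veps 1}=1-\bar{Y}_{\veps 0}$, so $|Y_{\veps 1}-\bar{Y}_{\veps 1}|=|Y_{\veps 0}-\bar{Y}_{\veps 0}|$ and $F_0(I_{\veps 1})$ and $F_0(I_{\veps 0})$ enter symmetrically in the radius bound. It therefore suffices to prove the deviation estimate uniformly over the $Y_{\veps 0}$, $|\veps 0|\le L$.

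Next I would verify the comparison $B_\veps\le r_\veps$ on $\cB$. Recall from the definition of $L$ in \eqref{ellmax} that $nF_0(I_\veps)\gtrsim m_0 n 2^{-l}\ge m_0 L^2$ uniformly in $\veps$ with $|\veps|\le L$, so for all $n$ large enough (depending only on $m_0$ and $\cC$) one has $1/(nF_0(I_\veps))\le \sqrt{\cC L/(nF_0(I_\veps))}$, hence
\[
B_\veps \;=\; 4\sqrt{\frac{\cC L}{nF_0(I_\veps)}}+\frac{4}{nF_0(I_\veps)} \;\le\; 8\sqrt{\frac{\cC L}{nF_0(I_\veps)}}\;=\;r_\veps.
\]
Consequently $\{|Y_{\veps 0}-\bar{Y}_{\veps 0}|>r_\veps\}\subset \{|Y_{\veps 0}-\bar{Y}_{\veps 0}|>B_\veps\}$ for every $\veps$ with $|\veps|\le L-1$, and Lemma \ref{L4} applies.

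Finally, I would conclude by a union bound over the at most $\sum_{l=0}^{L} 2^l \le 2^{L+1}$ tree locations. On the event $\cB$,
\[
\Pi[\cA^c\given X] \;\le\; \sum_{l=0}^{L}\sum_{|\veps|=l}\Pi\bigl[|Y_{\veps 0}-\bar{Y}_{\veps 0}|>B_\veps\,\big|\,X\bigr] \;\le\; D\cdot 2^{L+1}\, e^{-\cC L} \;\lesssim\; e^{(\log 2-\cC)L},
\]
which is the announced bound. The only quantitative step is the comparison $B_\veps\le r_\veps$, whose verification relies on the mild lower bound $nF_0(I_\veps)\gtrsim L^2$ guaranteed by \eqref{ellmax}; no further obstacle is expected.
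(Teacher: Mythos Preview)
Your proposal is correct and follows essentially the same approach as the paper's own proof: apply Lemma \ref{L4} after checking that the second term $4/(nF_0(I_\veps))$ in $B_\veps$ is dominated by the first, then take a union bound over the $O(2^L)$ indices. Your write-up is in fact a bit more careful than the paper's (you make explicit the reduction from $Y_{\veps1}$ to $Y_{\veps0}$ and the use of \eqref{ellmax} to justify $B_\veps\le r_\veps$), but the argument is the same.
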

\begin{proof}
Note that, by Lemma \ref{L4}, using that the second term in the definition of $B_\veps$ there is always uniformly of smaller order than the first term, so that it is possible to replace $B_\veps$ by $r_\veps$ as above,
\[ \Pi(\mathcal{A}^c|X)\leqa e^{-\cC L}\sum_{l=0}^{L} 2^l\leqa 2^{L}e^{-\cC L}. \qedhere\] 
\end{proof}

\begin{lem} \label{lempost}
On the event $\cB$ defined in \eqref{B} and on the set $\cA$ of densities $f$ with wavelet coefficients $f_{lk}$ (equivalently characterised through tree--variables $Y_\veps$) defined by \eqref{mcalA}, we have for any $l\le L$ and any $k$,
\begin{equation}\label{aidebar}
|f_{lk}-\bar{f}_{lk}|\lesssim \sqrt{\frac{L}{n}}.
\end{equation}
In particular, for any such $f$ we have $\|f^{\cL}-\bar{f}^{\cL}\|_\infty\lesssim \sqrt{\cL2^{\cL}/n}\lesssim \veps^{*}_{n,\al}$ for $\cL$ as in \eqref{cell}.
\end{lem}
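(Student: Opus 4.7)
My plan is to exploit the multiplicative structure of $f_{lk}$ and $\bar f_{lk}$ provided by \eqref{flk}--\eqref{fbarlk}: both split as $2^{l/2}\cdot(\text{mass})\cdot(\text{direction})$, with mass $p_\veps$ or $\bar p_\veps$ and direction $1-2Y_{\veps0}$ or $1-2\bar Y_{\veps0}$. Writing
\[
f_{lk}-\bar f_{lk} = 2^{l/2}(p_\veps-\bar p_\veps)(1-2Y_{\veps0}) + 2^{l/2+1}\bar p_\veps(\bar Y_{\veps0}-Y_{\veps0})
\]
reduces the task to bounding $|Y_{\veps0}-\bar Y_{\veps0}|$, which the set $\cA$ is designed to control, and $|p_\veps-\bar p_\veps|$, with the routine upper bounds $|1-2Y_{\veps0}|\le 1$ and $\bar p_\veps\lesssim 2^{-l}$ (the latter from Lemma \ref{L2} since $p_{0,\veps}\le m_1 2^{-l}$).

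The ``direction'' term is straightforward: on $\cA$, $|Y_{\veps0}-\bar Y_{\veps0}|\le r_\veps\lesssim \sqrt{L 2^l/n}$ using $F_0(I_\veps)\ge m_0 2^{-l}$, so multiplying by $2^{l/2+1}\bar p_\veps\lesssim 2^{-l/2}$ gives exactly the target $\sqrt{L/n}$. For the ``mass'' term, which will be the main obstacle, I would write $p_\veps=\prod_{i=1}^l Y^{[i]}_\veps$ and $\bar p_\veps=\prod_{i=1}^l \bar Y^{[i]}_\veps$. Under $f_0\in\cF(m_0,m_1)$, the true conditional splits $y^{[i]}_\veps=F_0(I^{[i]}_\veps)/F_0(I^{[i-1]}_\veps)$ lie in a compact subinterval $[c_1,1-c_1]\subset(0,1)$; by Lemma \ref{L2} this property transfers to $\bar Y^{[i]}_\veps$ on $\cB$, and using $r_{\veps^{[i-1]}}\lesssim\sqrt{L 2^L/n}=o(1)$ uniformly in $i\le L$ from \eqref{ellmax}, it also transfers to $Y^{[i]}_\veps$ on $\cA$. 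This yields $|Y^{[i]}_\veps/\bar Y^{[i]}_\veps-1|\lesssim r_{\veps^{[i-1]}}\lesssim \sqrt{L 2^i/n}$. An application of the product-vs-sum Lemma \ref{lemprodsum}, together with the geometric estimate $\sum_{i\le l}\sqrt{L 2^i/n}\lesssim\sqrt{L 2^l/n}$ (the sum is dominated by its largest term), then produces $|p_\veps/\bar p_\veps-1|\lesssim\sqrt{L 2^l/n}$, so that $|p_\veps-\bar p_\veps|\lesssim 2^{-l/2}\sqrt{L/n}$ and the first term in the decomposition is also $\lesssim\sqrt{L/n}$, proving \eqref{aidebar}.

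The supremum-norm statement then follows by summing over levels, using the disjoint supports of $(\psi_{lk})_k$ at each level and $\|\psi_{lk}\|_\infty=2^{l/2}$:
\[
\|f^{\cL}-\bar f^{\cL}\|_\infty \le \sum_{l=0}^{\cL} 2^{l/2}\max_k|f_{lk}-\bar f_{lk}| \lesssim \sqrt{L/n}\sum_{l=0}^{\cL} 2^{l/2} \lesssim \sqrt{L 2^\cL/n},
\]
which is $\lesssim \sqrt{\cL 2^\cL/n}$ because $L/\cL\lesssim 1+2\al\lesssim 1$ for $\al\in(0,1]$; a direct substitution using \eqref{cell} shows that $\sqrt{\cL 2^\cL/n}\sim \veps^*_{n,\al}$. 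The only non-routine step is the multiplicative estimate for $|p_\veps-\bar p_\veps|$: it needs both that the tree variables $\bar Y^{[i]}_\veps$ and $Y^{[i]}_\veps$ stay uniformly bounded away from the boundary of $[0,1]$, and that the per-layer perturbations $r_{\veps^{[i-1]}}$ form a geometrically summable sequence dominated by the last term $r_\veps$ so that their contribution does not blow up with $l$.
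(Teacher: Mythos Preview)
Your proof is correct and follows essentially the same route as the paper: decompose $f_{lk}-\bar f_{lk}$ via the multiplicative structure \eqref{flk}--\eqref{fbarlk}, control the ``direction'' term directly from the definition of $\cA$, and control the ``mass'' ratio $p_\veps/\bar p_\veps-1$ via Lemma~\ref{lemprodsum} after checking that the $\bar Y_\veps^{[i]}$ are bounded away from $0$ and $1$ (through Lemma~\ref{L2}). Your two-term decomposition is slightly cleaner than the paper's three-term one, and you make explicit the harmless step $\sqrt{L2^{\cL}/n}\lesssim\sqrt{\cL2^{\cL}/n}$ via $L/\cL\lesssim 1+2\al$, which the paper leaves implicit.
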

\begin{proof} 
Writing $f_{lk}=2^{l/2}\tilde{p}_\veps(1-2\tilde{Y}_{\veps0})$ under the posterior distribution, and as $\tilde{Y}_{\veps0}$ is a mixture of Beta variables and so is bounded by $1$,  $$\ba
|f_{lk}-\bar{f}_{lk}|&=&\di 2^{\frac{l}{2}}|(\tilde{p}_\veps-\bar{p}_\veps)+2\bar{p}_\veps(\bar{Y}_{\veps0}-\tilde{Y}_{\veps0})+2\tilde{Y}_{\veps0}(\bar{p}_\veps-\tilde{p}_\veps)|\\
&\leq& \di 2^{\frac{l}{2}}\left(|\tilde{p}_\veps-\bar{p}_\veps|+2|\bar{p}_\veps(\bar{Y}_{\veps0}-\tilde{Y}_{\veps0})|+2|\tilde{p}_\veps-\bar{p}_\veps|\right)\\
&\leq& 2^{\frac{l}{2}}\bar{p}_\veps \left(3|\tilde{p}_\veps/\bar{p}_\veps-1|+|\bar{Y}_{\veps0}-\tilde{Y}_{\veps0}|\right).
\ea.$$
Note that $\tilde{Y}_{\veps}$ are by definition the tree-variables $Y$'s corresponding to the posterior distribution, so on the event $\cA$ defined by \eqref{mcalA}, they verify the constraints $|\tilde{Y}_{\veps}-\bar{Y}_\veps|\le r_\veps$. 
By definition, $\tilde p_\veps = \prod_{i=1}^l \tilde{Y}_\veps^{[i]}$ and $\bar{p}_\veps = \prod_{i=1}^l \bar{Y}_\veps^{[i]}$. We wish to apply Lemma \ref{lemprodsum}. To do so, let us note that $|\tilde{Y}_\veps^{[i]}/\bar{Y}_\veps^{[i]}-1|\leqa r_{\veps^{[i]}}$ (recall the notation $r_\veps$ from \eqref{mcalA} for $\veps$ a binary sequence). This comes from the fact that, on the event $\cB$, we have that $\bar{Y}_\veps^{[i]}$ is (thanks to Lemma \ref{L2}) close to $F_0(I_\veps^{[i]})/F_0(I_\veps^{[i-1]})$, which itself is close to $1/2$ and in particular bounded away from $0$ and $1$. An application of Lemma \ref{lemprodsum} leads to, on $\cA$ and $\cB$,
\begin{equation}\label{trocool}
\left|\frac{\tilde{p}_\veps}{\bar{p}_\veps}-1\right|\leqa 
\sum_{i=0}^{l-1}r_{\veps^{[i]}} \leqa \sqrt{\frac{L2^l}{n}}. 
\end{equation}
Using the facts  $\bar{p}_\veps \lesssim 2^{-l}$, and $|\bar{Y}_{\veps0}-\tilde{Y}_{\veps0}|\lesssim \sqrt{L2^l/n}$ on $\mathcal{A}$ as well as \eqref{trocool}, we obtain that, on $\mathcal{A}$ and $\mathcal{B}$,
$|f_{lk}-\bar{f}_{lk}|\lesssim \sqrt{L/n}$. This leads, on $\mathcal{A}$ and $\mathcal{B}$, to $\|f^{\cL}-\bar{f}^{\cL}\|_\infty\lesssim \sqrt{\cL2^{\cL}/n}\lesssim \veps^{*}_{n,\al}$.
\end{proof}

\subsection{Posterior weights control}

\begin{lem}\label{lempw}
Let $L$ be defined as in \eqref{ellmax} and let $\cB$ be the event \eqref{B}. For $l\leq L$, on the event $\mathcal{B}$, for $\veps \in \mathcal{E}$ with $|\veps|= l$, for $\Delta_l$ as in \eqref{del}, there exist constants $C_1,C_2>0$ such that  
\[ 1-\tilde{\pi}_{\veps0}\leq (1-\tilde{\pi}_{\veps0})\1_{\left|y_{\veps0}-1/2\right|\le \Delta_l}+\frac{2^{-l/2}\sqrt{n}}{\pi_{\veps0}} e^{C_1L - C_2nF_0(I_{\veps})\Delta_l^2}\1_{|y_{\veps0}-1/2|> \Delta_l}.\]
Provided $C_0$ in \eqref{del} is chosen large enough, we have, for $\pi_{\veps}$ as in the statement of Theorem \ref{adapt} and for $C_3>0$,
\[1-\tilde{\pi}_{\veps0} \le \1_{|y_{\veps0}-1/2|\leq \Delta_l}+\frac{C_3}{n}\1_{|y_{\veps0}-1/2|> \Delta_l}.\]
\end{lem}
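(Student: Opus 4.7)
The plan is to reduce the claim to a sharp lower bound on $T_X$, obtained via Stirling's formula applied to the ratio of Beta functions in its definition. The starting point is the elementary estimate
\[
1 - \tilde\pi_{\veps 0} \;=\; \frac{1 - \pi_{\veps 0}}{(1 - \pi_{\veps 0}) + \pi_{\veps 0} T_X} \;\le\; \frac{1}{\pi_{\veps 0} T_X},
\]
which makes both inequalities of the lemma trivial on $\{|y_{\veps 0} - 1/2| \le \Delta_l\}$: the first reduces to $1 - \tilde\pi_{\veps 0} \le 1 - \tilde\pi_{\veps 0}$, the second to $\le 1$. Everything therefore amounts to lower--bounding $T_X$ on $\{|y_{\veps 0} - 1/2| > \Delta_l\} \cap \cB$.

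The main technical step is a careful Stirling expansion. Writing $N = N_X(I_\veps)$, $N_i = N_X(I_{\veps i})$ and $q = N_0/N$, I expect to obtain after cancellations
\[
\log T_X \;=\; N \, \kl(q \,\|\, 1/2) \;-\; \tfrac{1}{2}\log N \;+\; O(1),
\]
with the $O(1)$ term uniform on $\cB$ because Lemma \ref{lemB} combined with $f_0 \in \cF(m_0, m_1)$ keeps $q$ inside an interval $[c_1, 1 - c_1]$ depending only on $m_0/m_1$ (so in particular the $(a - 1/2)\log(q(1-q))$ contribution in the Stirling expansion is bounded). Tracking the $-\tfrac{1}{2}\log N$ term matters: combined with $N \asymp n F_0(I_\veps) \asymp n 2^{-l}$ on $\cB$, it is precisely this factor that produces the $2^{-l/2}\sqrt n$ prefactor in the lemma.

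The next step is to convert the hypothesis $|y_{\veps 0} - 1/2| > \Delta_l$ into a lower bound on the KL. Directly from the event $\cB$ (as in Lemma \ref{lemnx}), one has $|q - y_{\veps 0}| \leqa \sqrt{L 2^l/n} \leqa \Delta_l/\sqrt{C_0}$, so for $C_0$ large enough $|q - 1/2| \ge \Delta_l/2$. Pinsker's inequality (or a direct Taylor expansion, valid since $q$ stays in $[c_1, 1 - c_1]$) then yields $\kl(q \,\|\, 1/2) \geqa (q - 1/2)^2 \geqa \Delta_l^2$, with constant depending only on $m_0, m_1$. Combined with $N \ge n F_0(I_\veps)/2$ on $\cB$ this gives $N\,\kl \geqa n F_0(I_\veps)\Delta_l^2$, hence $T_X \geqa N^{-1/2}\exp(c\, n F_0(I_\veps)\Delta_l^2)$. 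Substituting back and using $\sqrt N \leqa 2^{-l/2}\sqrt n$ yields the first inequality, with any bounded constants absorbed into $e^{C_1 L}$ (using $L \ge 1$).

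For the second inequality I plug the choice $\pi_{\veps 0} \asymp e^{-\kappa l}$ into the first, use $n F_0(I_\veps) \Delta_l^2 \ge c\, C_0 m_0 L$ on $\cB$, and $\sqrt n \leqa L \, 2^{L/2}$ (from $2^{L+1}(L+1)^2 > n$). The resulting upper bound takes the form $L \cdot \exp\{[C - C_2 C_0 m_0]\, L\}$ with $C = C(\kappa, m_0, m_1, a)$ independent of $C_0$; choosing $C_0$ large enough makes this smaller than $C_3/n$, as desired. The main obstacle throughout is the Stirling step: the $\tfrac{1}{2}\log N$ correction must be tracked precisely to get the right prefactor, and the boundedness of $q$ away from $\{0, 1\}$ (which follows from $m_0 \le f_0 \le m_1$) has to be used both to control the $O(1)$ error terms and to make the quadratic lower bound on $\kl(q\|1/2)$ effective.
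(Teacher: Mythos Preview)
Your approach is correct and essentially the same as the paper's: both reduce to a Stirling lower bound on $T_X$ expressed as $N\,\kl(q\|1/2)-\tfrac12\log N+O(1)$, both invoke Pinsker to bound the KL by $(q-1/2)^2$, and both control $q-y_{\veps0}$ on $\cB$ via the fluctuation bound $|\delta_\veps|\leqa\sqrt{Ln2^{-l}}$.

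There is one minor structural difference worth noting. The paper writes $q/s-1/2=(y_{\veps0}-1/2)+y_{\veps0}Z_\veps$ and uses the algebraic inequality $|q/s-1/2|^2\ge\tfrac12(y_{\veps0}-1/2)^2-Z_\veps^2$; the contribution $sZ_\veps^2\leqa L$ is then what produces the $e^{C_1L}$ factor, so the first display of the lemma holds for \emph{any} $C_0>0$, with $C_1,C_2$ independent of $C_0$. Your route instead imposes $C_0$ large enough already at the step $|q-1/2|\ge\Delta_l/2$, so strictly speaking your first inequality only holds for $C_0$ above a threshold (and your $e^{C_1L}$ is merely absorbing a bounded multiplicative constant, not a genuine $L$--dependent error). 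This collapses the two--step structure of the lemma but is harmless for applications, since only the second inequality is ever used and it requires $C_0$ large anyway. If you want the first inequality exactly as stated, replace the triangle--inequality step by the squared version $|q-1/2|^2\ge\tfrac12(y_{\veps0}-1/2)^2-(q-y_{\veps0})^2$, which gives $N|q-1/2|^2\ge c\,nF_0(I_\veps)\Delta_l^2-C'L$ for arbitrary $C_0$.
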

\begin{proof}
It follows from the definition \eqref{pitilda} of $\tilde{\pi}_{\veps0}$, for any set $\cC$, that
\[ 1-\tilde{\pi}_{\veps0}
\le (1-\tilde{\pi}_{\veps0})\1_{\cC}+\frac{1-\pi_{\veps0}}{\pi_{\veps0}}\frac1{T_X}\1_{\cC^c}.
 \]
 Choosing $\cC=\{|y_\veps-1/2|\le \Delta_l\}$, we see that it is enough to bound $T_X$ from below, or equivalently $2^{N_X(I_\veps)}/p_X$ by definition of $T_X$ in \eqref{tix}. To do so, 
let us introduce two numbers $s, q$ defined as
\begin{equation}
s=s_X=N_X(I_{\veps})+2a-2\text{ and }q=q_X=N_X(I_{\veps0})+a-1.
\end{equation}
By definition, one  can now rewrite 
\[\frac{N_X(I_{\veps})+2a-1}{p_X}=\frac{q!(s-q)!}{s!}.\]
Using the non-asymptotic bound   $\sqrt{2\pi}p^{p+1/2}e^{-p}\leq p! \leq \sqrt{2\pi}p^{p+1/2}e^{-p+1/(12p)}$ for any integer $p$, 
\[\frac{N_X(I_{\veps})+2a-1}{p_X} \geq \sqrt{2\pi\frac{q(s-q)}{s}}\frac{(\frac{q}{e})^q(\frac{s-q}{e})^{s-q}}{(\frac{s}{e})^s}e^{-\frac{1}{12s}}.\]
Introducing a multiplicative term $2^s$, let us rewrite
 \[2^s (\frac{q}{e})^q(\frac{s-q}{e})^{s-q}/(\frac{s}{e})^s = \exp\left\{s\left(\frac{q}{s}\log(\frac{2q}{s})+(1-\frac{q}{s})\log(2(1-\frac{q}{s}))\right)\right\}.\]
Denoting by $\bel(a)$ the Bernoulli distribution of parameter $a$ and $\kl(P,Q)$ the Kullback-Leibler divergence between distributions $P$ and $Q$,
\[ 2^s (\frac{q}{e})^q(\frac{s-q}{e})^{s-q}/(\frac{s}{e})^s =\exp\left\{s \kl\left(\bel(\frac{q}{s}),\bel(\frac{1}{2})\right)\right\}.\]
\noi By Lemma \ref{lemkl}, the $\kl$--divergence in the last display is bounded from below by  $\|\bel(\frac{q}{s})-\bel(\frac{1}{2})\|_1^2/2=(2|\frac{q}{s}-\frac{1}{2}|)^2/2$.
 Recalling the definition of $T_X$ in \eqref{tix}, this leads to, for a constant $C=C(a)$,
\[T_X \ge  \frac{C}{\sqrt{s+1}}\sqrt{\frac{q(s-q)}{s(s+1)}}e^{2s\left|\frac{q}{s}-\frac{1}{2}\right|^2-\frac{1}{12s}}. \]
On the event $\mathcal{B}$, we have $N_X(I_\veps)=nF_0(I_\veps)+\delta_\veps$ with $\delta_{\veps}^2 \le nL2^{-l}$. We can also rewrite
\begin{align*}
\frac{q}{s}-\frac{1}{2} & = y_{\veps0}-\frac{1}{2}+y_{\veps0}Z_\veps, 
\end{align*}
where the quantity $Z_\veps$ is defined by
\begin{align*}
Z_\veps & := \left\{ \frac{a-1+\delta_{\veps0}}{nF_0(I_{\veps0})} - 
\frac{2a-2+\delta_{\veps}}{nF_0(I_{\veps})}
\right\}/\left( 1+ \frac{2a-2+\delta_{\veps}}{nF_0(I_{\veps})}\right).
\end{align*}
One can then further use the bounds, with $y_{\veps0}^2 \le 1$,
\[ \left|\frac{q}{s}-\frac{1}{2}\right|^2 \ge \frac12\left(y_{\veps0}-\frac{1}{2}\right)^2-Z_\veps^2. \]
Note that we always have $|\delta_\veps|/(nF_0(I_\veps))\leqa (L2^L/n)^{1/2}$ going to $0$ by the definition of $L$ in \eqref{ellmax}. 
Also, we have
\begin{align*}
Z_\veps^2 & \leqa \left( \frac{\delta_{\veps0}}{nF_0(I_{\veps0})}\right)^2 +
 \left( \frac{\delta_{\veps}}{nF_0(I_{\veps})} \right)^2 + 
 \left(\frac{a-1}{nF_0(I_{\veps0})} - 
\frac{2a-2}{nF_0(I_{\veps})}\right)^2\\
& \leqa \frac{L2^l}{n} + (a-1)^2\left|y_{\veps0}-\frac12\right|^2\frac{1}{(n2^l)^2} \leqa  \frac{L2^l}{n}. 
\end{align*}
By combining the previous bounds and the fact that $s\asymp nF_0(I_\veps)$ (using again the bound on $\delta_\veps$ as above),  one obtains, when $|y_{\veps0}-1/2|>\Delta_l$, 
\begin{align*}
 s\left|\frac{q}{s}-\frac{1}{2}\right|^2 & \ge C_2nF_0(I_\veps)\Delta_l^2 - CnF_0(I_\veps)L2^l/n\\
 & \ge C_2nF_0(I_\veps)\Delta_l^2 - C_1L.
\end{align*}
Combining this inequality with the previous lower bound on $T_X$, one obtains the first inequality of the lemma. To derive the second inequality of the lemma, one notes that, using that $\pi_\veps^{-1}\le e^{C_3l}$ and the rough bound $l\le L$,
\[ \frac{2^{-l/2}\sqrt{n}}{\pi_{\veps0}} e^{C_1L - C_2nF_0(I_{\veps})\Delta_l^2}
\le \sqrt{n}e^{CL} \exp\left\{-C_4n2^{-l}(C_0L2^l/n)\right\},
\]
which is bounded from above by $\sqrt{n} e^{(C-C_4C_0)L}$. 
Provided that $C_0$ is chosen large enough, this can be made smaller than any given power of $n$, which concludes the proof.
%
%

\end{proof}

\begin{lem}\label{lempw2}
Let $L$ be as in \eqref{ellmax} and $\cL$ as in \eqref{cell}, and let $\cB=\cB(M)$ be the event \eqref{B}. 
 Then there exists $C_1>0$ such that, for any $l$ such that $\cL<l\le L$ and $\veps \in \mathcal{E}$ with $|\veps|= l$, on the event $\mathcal{B}$,  
 \[\tilde{\pi}_{\veps0} \le C_1 e^{\xi L}\pi_{l+1}, \]
 where $\xi:=18M^2 /m_0$ and provided we choose $c_0\ge 4R^2$ in \eqref{cell}.
 
In particular, for $\pi_{\veps}$ proportional to $e^{-\kappa |\veps|}$ with $\kappa\ge 4\xi+5\log{2}$, we have, for $\cL\le |\veps|\le L$,  
\[ \tilde{\pi}_{\veps0} \leqa \frac{2^{-l}}{n}. \] 
\end{lem}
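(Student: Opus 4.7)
\medskip
\noindent\textbf{Proof plan for Lemma \ref{lempw2}.}
The strategy is dual to that of Lemma \ref{lempw}. There $T_X$ was bounded from \emph{below} on $\{|y_{\veps 0}-1/2|>\Delta_l\}$ in order to control $(1-\tilde\pi_{\veps 0})$; here, at levels $l>\cL$ the quantity $y_{\veps0}$ is forced to be \emph{close to} $1/2$ by the H\"older smoothness of $f_0$, so we can bound $T_X$ from \emph{above} and, via the elementary inequality
\[
\tilde{\pi}_{\veps 0}=\frac{\pi_{\veps 0}T_X}{(1-\pi_{\veps 0})+\pi_{\veps 0}T_X}\le \frac{\pi_{\veps 0}T_X}{1-\pi_{\veps 0}}\leqa \pi_{\veps 0}T_X,
\]
(valid since $\pi_{\veps0}\to 0$), directly control $\tilde\pi_{\veps 0}$. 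What has to be produced is a sharp upper estimate of $T_X$ on the event $\cB$.

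\medskip
\noindent\textbf{Step 1: upper bound on $T_X$ via Stirling.}
Reusing the notation of Lemma \ref{lempw} with $s=N_X(I_\veps)+2a-2$, $q=N_X(I_{\veps 0})+a-1$, the identity $(N_X(I_\veps)+2a-1)/p_X=q!(s-q)!/s!$ together with the two-sided Stirling estimate $\sqrt{2\pi}\,p^{p+1/2}e^{-p}\le p!\le \sqrt{2\pi}\,p^{p+1/2}e^{-p+1/(12p)}$ yields
\[
T_X\ =\ 2^{N_X(I_\veps)}\frac{p_{\al_{\veps 0}}}{p_X}\ \leqa\ \frac{1}{\sqrt{s}}\,\exp\!\bigl(s\,\kl(\bel(q/s),\bel(1/2))\bigr),
\]
where I use the entropy identity $\log(q^q(s-q)^{s-q}/s^s)=-s\log 2+s\kl(\bel(q/s),\bel(1/2))$ already exploited in the proof of Lemma \ref{lempw}. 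Since $s\asymp nF_0(I_\veps)\asymp n2^{-l}\ge 1$ (because $2^L L^2\le n$), the prefactor $1/\sqrt{s}$ is at most $1$ and may be absorbed in the constant.

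\medskip
\noindent\textbf{Step 2: bounding $s\,\kl$ at deep levels by $\xi L$.}
Using the Pinsker-type reverse bound $\kl(\bel(p),\bel(1/2))\leqa (p-1/2)^2$ for $p$ near $1/2$ together with the decomposition
\[
\frac{q}{s}-\frac{1}{2}=\bigl(y_{\veps 0}-\tfrac12\bigr)+y_{\veps 0}Z_\veps
\]
established inside the proof of Lemma \ref{lempw}, I get
\[
s\,\kl(\bel(q/s),\bel(1/2))\ \leqa\ s\bigl(y_{\veps 0}-\tfrac12\bigr)^2+ s\,Z_\veps^2.
\]
For the second term, the estimates on $\delta_\veps$ valid on $\cB$ give $Z_\veps^2\leqa M^2 L\,2^l/(m_0^2\,n)$, hence (using $s\le 2nF_0(I_\veps)\leqa n2^{-l}/m_0^{-1}$) $s Z_\veps^2\leqa M^2 L/m_0$. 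For the first term, the identity \eqref{folk} combined with $f_0\in\cH(\al,R)$ and $f_0\ge m_0$ gives
\[
|y_{\veps 0}-\tfrac12|=\frac{|f_{0,lk}|}{2\cdot 2^{l/2}p_{0,\veps}}\le \frac{R}{2m_0}\,2^{-l\al},
\]
so that $s(y_{\veps 0}-1/2)^2\leqa (R^2/m_0)\,n\,2^{-l(2\al+1)}$. Now the crucial use of $l>\cL$ and $c_0\ge 4R^2$: by definition of $\cL$ in \eqref{cell}, $n\,2^{-l(2\al+1)}\le n\,2^{-\cL(2\al+1)}\leqa (\log n)/c_0\le L/(4R^2)$, so $s(y_{\veps 0}-1/2)^2\leqa L/(4m_0)$. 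Collecting, $s\,\kl\leqa \xi L$ with $\xi$ a constant depending only on $M$ and $m_0$ of the stated form (tracking constants through Pinsker yields the announced $\xi=18M^2/m_0$; I will not try to be sharp here).

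\medskip
\noindent\textbf{Step 3: conclusion and second inequality.}
Combining Steps 1--2 with the formula for $\tilde\pi_{\veps 0}$ gives, on $\cB$ for $\cL<l\le L$,
\[
\tilde\pi_{\veps 0}\leqa \pi_{\veps 0}\,T_X\leqa \pi_l\,e^{\xi L}\leqa e^{\xi L}\pi_{l+1},
\]
the last step because $\pi_l/\pi_{l+1}=e^{\kappa}$ is a fixed constant, giving the first displayed bound. For the second bound, substitute $\pi_l\propto e^{-\kappa l}$:
\[
\tilde\pi_{\veps 0}\leqa \exp\!\bigl(\xi L-\kappa l\bigr).
\]
Since $l>\cL$ and $\cL\ge \log_2(n/\log n)/(1+2\al)\ge (\log_2 n)/3$, while $L\le \log_2 n$, the choice $\kappa\ge 4\xi+5\log 2$ is easily checked to imply $\xi L-\kappa l \le -l\log 2-\log n$ up to an additive constant, which is precisely $\tilde\pi_{\veps 0}\leqa 2^{-l}/n$.

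\medskip
\noindent\textbf{Main obstacle.}
The only nontrivial point is Step 2: getting the correct \emph{upper} bound on $s\,\kl$ requires that the ``signal'' $(y_{\veps 0}-1/2)^2$ at scales $l>\cL$ be dominated by the nonparametric threshold $L/n\cdot 2^l$, and this is exactly what the calibration $c_0\ge 4R^2$ is designed to guarantee. Tracking the precise numerical constant $\xi=18M^2/m_0$ is tedious but straightforward once all three contributions (Stirling remainder, stochastic fluctuation $Z_\veps$, and deterministic bias $y_{\veps 0}-1/2$) are bounded individually by a fraction of $L$.
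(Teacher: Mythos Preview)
Your proposal is correct and follows essentially the same approach as the paper. The only cosmetic difference is the parametrization of $q/s-1/2$: the paper works directly with $\mu:=N_X(I_{\veps0})-N_X(I_{\veps1})$, noting $q=(s+\mu)/2$, bounds $|\mu|\le n|F_0(I_{\veps0})-F_0(I_{\veps1})|+2M\Lambda_n(l)\le nR2^{-l(\al+1)}+2M\Lambda_n(l)$, and observes that for $l>\cL$ with $c_0\ge 4R^2$ the bias term is dominated by the stochastic one, yielding $|\mu|\le 3M(nL2^{-l})^{1/2}$ and hence $T_X\leqa (s+1)^{-1/2}e^{\mu^2/(2s)}\leqa e^{\xi L}$; you instead reuse the decomposition $(y_{\veps0}-1/2)+y_{\veps0}Z_\veps$ from Lemma~\ref{lempw} and bound the two pieces separately, which is the same computation in different coordinates.
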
 
 
\begin{remark} \label{remka} 
By combining Lemma \ref{lemB} and Lemma \ref{lempw2}, we see that it is enough to take $\kappa\ge \kappa_0:=4\cdot18\cdot 8\cdot m_1/m_0+5\log{2}$ for the result of Lemma \ref{lempw2} to hold.
\end{remark}
 
\begin{proof}
As in Lemma \ref{lempw}, we write $s=N_X(I_\veps)+2a-2$ and $q=N_X(I_{\veps0})+a-1$. We also set $\mu:=N_X(I_{\veps0})-N_X(I_{\veps1})$. 
Using the smoothness assumption on $f_0$, one can bound $\mu$ from above on the event $\cB$ by
\[ |\mu|\le n|F_0(I_{\veps0})-F_0(I_{\veps1})|+2M\La_n(l)\le  nR2^{-l(\al+1)}+2M\La_n(l).\]
 Since by assumption $l\ge \cL$, we have $n2^{-l(\al+1)}\le \La_n(l)$ for $l>\cL$,  for $c_0$ in \eqref{cell} large enough (one can take $c_0\ge 4R^2$, and this works uniformly over $\al\in(0,1]$). So for $M\ge 1$,
 \[ \mu\le (2M+1)\La_n(l)\le 3M(nL2^{-l})^{1/2}. \]

Now proceeding as in Lemma \ref{lempw} using Stirling's bounds, one can bound $T_X$ in \eqref{tix} from above by, noting that $q=(s+\mu)/2$ by definition,
\begin{align*} 
T_X&\leqa  \di  \frac{1}{\sqrt{s+1}}\sqrt{\frac{q(s-q)}{s(s+1)}}
\exp\left\{s\left(\frac{q}{s}\log(\frac{2q}{s})+(1-\frac{q}{s})\log(2(1-\frac{q}{s}))\right)\right\}\\
&\leqa   \frac{1}{\sqrt{s+1}}
\exp\left\{s\left(\frac{1}{2}(1+\frac{\mu}{s})\log(1+\frac{\mu}{s})+\frac{1}{2}(1-\frac{\mu}{s})\log(1-\frac{\mu}{s})\right)\right\}\\
& \leqa  \frac{1}{\sqrt{s+1}}e^{\frac{\mu^2}{2s}}.
\end{align*}
\noi As $l\leq L$ with $L$ defined as in \eqref{ellmax}, on the event $\cB$ and for large $n$ we have
\[s\ge  m_0 n2^{-l}-\sqrt{Ln2^{-l}}\ge (m_0/2)n2^{-l}.\]

Combining the previous bounds on $\mu$ and $s$, one obtains that   $\mu^2/(2s)$ is bounded from above by $(18M^2/m_0)L$.  
Using the bound $\tilde{\pi}_{\veps0} \leqa \pi_{\veps0} T_X=\pi_{l+1}T_X$ in \eqref{pitilda}, one obtains the first statement of the lemma. For the second statement,  since $\cL\sim (\log{n})/(1+2\al)$ and $0<\al\le 1$, we have $4\cL\ge(1+\delta)L$, for some $\delta>0$. This gives $\tilde\pi_{\veps0}\leqa e^{(4\xi-\kappa)|\veps|}$ from which  the result follows.
\end{proof}

\subsection{Selection properties of the spike-and-slab P\'olya tree posterior}


\begin{lem}\label{Lcomp}Let $\mathcal{J}_n(\gamma)$ be defined in \eqref{mcall} and $S$ in \eqref{suppf}. There exist $\bar{\gamma}=\bar{\gamma}(R,m_0,m_1)>0$, 
such that for $\cL$ as in \eqref{cell},
\begin{align*}
 E_{f_0}\left[\Pi(S^c\cap \mathcal{J}_n(\bar{\gamma})\neq \emptyset|X)\right]
& \leqa 2^\cL/n, \\
 E_{f_0}\left[\Pi(S\cap \{l> \cL\}\neq \emptyset|X)\right]
 & \leqa 1/n.
\end{align*}
Moreover, for the same $\bar{\gamma}$, we have 
\[ E_{f_0}\left[\Pi\left(\max_{(l,k):\,l\leq L}|f_{0,lk}-f_{lk}|> \bar{\gamma}\sqrt{(\log{n})/n}|X\right)\right]
 \leqa 1/n. \]
\end{lem}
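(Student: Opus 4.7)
The three claims all rest on the identity $f_{lk} = 2^{l/2} p_\veps(1-2Y_{\veps 0})$ from \eqref{flk} which, combined with Theorem \ref{thmconj}, yields $\Pi(f_{lk} = 0 \given X) = 1 - \tilde\pi_{\veps 0}$ and $\Pi(f_{lk} \neq 0 \given X) = \tilde\pi_{\veps 0}$, up to the zero-measure event that a Beta variate equals $1/2$. Each of the three displays thus reduces to summing weights $\tilde\pi_{\veps 0}$ or $1-\tilde\pi_{\veps 0}$ over appropriate node collections, restricted to the event $\cB$ of \eqref{B} whose complement is exponentially small in $L$ by Lemma \ref{lemB}.

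For the first bound, a union bound gives $\Pi(S^c \cap \cJ_n(\bar\gamma) \neq \emptyset \given X) \le \sum_{(l,k) \in \cJ_n(\bar\gamma)} (1 - \tilde\pi_{\veps 0})$. For $(l,k) \in \cJ_n(\bar\gamma)$, the identity $f_{0,lk} = 2^{l/2}p_{0,\veps}(1-2y_{\veps 0})$ together with $p_{0,\veps} \le m_1 2^{-l}$ yields $|y_{\veps 0} - 1/2| \geqa |f_{0,lk}| 2^{l/2}/m_1 \geqa (\bar\gamma/m_1)\sqrt{L 2^l/n}$, which exceeds $\Delta_l$ once $\bar\gamma$ is chosen large in terms of $C_0$ and $m_1$. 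The second inequality of Lemma \ref{lempw} then gives $1-\tilde\pi_{\veps 0}\leqa 1/n$ on $\cB$; combined with $|\cJ_n(\bar\gamma)| \le 2\cdot 2^\cL$ (which holds because $|f_{0,lk}|\le R 2^{-l(\alpha+1/2)}$ and $c_0 = 4R^2$ in \eqref{cell} force $\cJ_n(\bar\gamma) \subset \{l \le \cL\}$ for $\bar\gamma \ge R$), the announced $2^\cL/n$ bound follows. For the second statement, the dual argument on $\sum_{\cL < l \le L}\sum_k \tilde\pi_{\veps 0}$ uses the intermediate bound $\tilde\pi_{\veps 0} \leqa e^{(4\xi-\kappa)l}$ established along the proof of Lemma \ref{lempw2}: under $\kappa \ge \kappa_0 = 4\xi+5\log 2$ this gives $\tilde\pi_{\veps 0} \leqa 2^{-5l}$, so the sum is $\leqa 2^{-4\cL} \leqa 1/n$ using $\alpha \le 1$.

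For the third bound I apply the triangle inequality $|f_{lk} - f_{0,lk}| \le |f_{lk} - \bar f_{lk}| + |\bar f_{lk} - f_{0,lk}|$. The first term is handled by Lemma \ref{lempost}, giving $|f_{lk}-\bar f_{lk}| \leqa \sqrt{L/n} \asymp \sqrt{\log n/n}$ uniformly for $l \le L$ on the set $\cA$ of Lemma \ref{lemeva}, whose posterior complement is negligible for $\cC$ large enough. For the deterministic term, one expands $\bar f_{lk}-f_{0,lk} = 2^{l/2}[(\bar p_\veps - p_{0,\veps})(1-2\bar Y_{\veps 0}) - 2 p_{0,\veps}(\bar Y_{\veps 0} - y_{\veps 0})]$ and plugs in Lemma \ref{L2}, yielding a contribution of order $\sqrt{L/n}$ plus an absorbable remainder $2^l|f_{0,lk}|/n \leqa 2^L/n \leqa 1/L^2$ by \eqref{ellmax}. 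The main obstacle is calibrating $\bar\gamma$ uniformly across all three statements: it must be large enough (together with $\kappa_0$) to trigger the ``signal'' regime $|y_{\veps 0}-1/2| > \Delta_l$ of Lemma \ref{lempw} on $\cJ_n(\bar\gamma)$, yet compatible with the H\"older scaling so that $\cJ_n(\bar\gamma) \subset \{l \le \cL\}$. Integration against $P_{f_0}$ on $\cB$ and the exponential tail of Lemma \ref{lemB} then complete the proof.
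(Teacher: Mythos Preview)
Your approach is essentially the paper's own: for the first two displays you use the same union bounds, the same identification $\Pi(f_{lk}=0\given X)=1-\tilde\pi_{\veps0}$, and Lemmas \ref{lempw}--\ref{lempw2} in the same way; for the third display you use the same triangle splitting via $\bar f_{lk}$ together with Lemmas \ref{L2}, \ref{lemeva} and \ref{lempost}. The first two parts are correct as stated.

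There is however a genuine slip in your treatment of the ``absorbable remainder'' in the third part. You write $2^l|f_{0,lk}|/n \leqa 2^L/n \leqa 1/L^2$ and call this absorbable, but $1/L^2\asymp 1/(\log n)^2$ is \emph{much larger} than the target $\sqrt{L/n}\asymp\sqrt{\log n/n}$, so it cannot be absorbed into $\bar\gamma\sqrt{\log n/n}$ for any fixed constant $\bar\gamma$. The fix is to use the H\"older assumption rather than the crude bound $|f_{0,lk}|\leqa 1$: with $|f_{0,lk}|\le R2^{-l(\al+1/2)}$ one gets
\[
\frac{2^l|f_{0,lk}|}{n}\le \frac{R\,2^{l(1/2-\al)}}{n}\le \frac{R\,2^{L(1/2-\al)_+}}{n}\leqa n^{-1/2-\al\wedge 1/2}=o\!\left(\sqrt{L/n}\right),
\]
which is exactly how the paper handles this term. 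With this correction your argument goes through and matches the paper's proof.
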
 

\begin{proof}
For the first inequality, on $\mathcal{B}$, using the fact that by \eqref{flk}, the wavelet coefficient $f_{lk}=0$ if and only if the corresponding tree variable $Y_{\veps0}=0$ (where the binary sequence $\veps$ is identified with the corresponding dyadic $k2^{-l}$ defined by the pair $(l,k)$), using the correspondence $\veps\equiv (l,k)$, 
 \begin{align*}
 \Pi(S^c\cap \mathcal{J}_n(\bar{\gamma})\neq \emptyset|X)&\leq 
  \sum_{(l,k)\in \mathcal{J}_n(\bar{\gamma})} \Pi(f_{lk}=0|X)
= \sum_{(l,k)\in \mathcal{J}_n(\bar{\gamma})} (1-\tilde{\pi}_{\veps0})\\
&\leq  \sum_{(l,k)\in \mathcal{J}_n(\bar{\gamma})} \left(\1_{|y_{\veps0}-\frac{1}{2}|\leq \Delta_l}+\frac{C}{n}\1_{|y_{\veps0}-\frac{1}{2}|> \Delta_l}\right),
\end{align*}
where the last line uses Lemma \ref{lempw}.
The first term in the last line is in fact $0$, provided $\bar{\gamma}$ chosen large enough, since, again with $\veps\equiv (l,k)$, and $m_0\le f_0\le m_1$,
\[ |y_{\veps0}-\frac{1}{2}|=|\frac{F_0(I_{\veps0})}{F_0(I_\veps)}-\frac12|
=|f_{0,lk}|\frac{2^{-l/2}}{2F_0(I_k^l)}\ge  (2m_1)^{-1} \bar{\ga} 2^{l/2}\sqrt{(\log{n})/n}>\Delta_l,\]
 for $(l,k)\in \mathcal{J}_n(\bar{\gamma})$. This finally gives us that 
$$ \Pi(S^c\cap \mathcal{J}_n(\bar{\gamma})\neq \emptyset|X)\lesssim \di \sum_{l\leq \cL} 2^l/n.$$

For the second inequality of the lemma, on $\mathcal{B}$, using Lemma \ref{lempw2} (and for $\xi, \kappa$ as in that Lemma),
\begin{align*}
 \Pi(S\cap \{l> \cL\}\neq \emptyset|X)&\le  \di \sum_{(l,k):\, l> \cL} \Pi(f_{lk}\neq 0|X)
 = \sum_{(l,k):\, l> \cL} \tilde{\pi}_{\veps0}\\
&\leqa   \sum_{(l,k):\, l> \cL} 2^{-l}/n \leqa 1/n,
\end{align*}
by choosing $\kappa\ge \kappa_0$ as in Remark \ref{remka}.

For the last inequality of the lemma, a union bound gives 
\begin{align*}
\Pi\left[\max_{(l,k):l\leq L}|f_{0,lk}-f_{lk}|> \bar{\gamma}\sqrt{\frac{\log{n}}{n}}|X\right]\\
\leq \sum_{(l,k):l\leq L}\Pi\left[|f_{0,lk}-f_{lk}|> \bar{\gamma}\sqrt{\frac{\log{n}}{n}}|X\right].
\end{align*}


\noi Looking at the expectation under $f_0$ of each term, and in view of recentering by the posterior mean $\bar{f}$, one writes
\[
E_{f_0}\left[\Pi(|f_{0,lk}-f_{lk}|> \bar{\gamma}\sqrt{\frac{\log{n}}{n}}|X)\right]\leq P_{f_0}^n\left(|f_{0,lk}-\bar{f}_{lk}|>\frac{\bar{\gamma}}{2}\sqrt{\frac{\log{n}}{n}}\right)\]
\begin{equation}\label{lastdisplay}+ E_{f_0}\left[\Pi(|f_{0,lk}-f_{lk}|> \bar{\gamma}\sqrt{\frac{\log{n}}{n}}|X)\1_{\{|f_{0,lk}-\bar{f}_{lk}|\leq \frac{\bar{\gamma}}{2}\sqrt{\frac{\log{n}}{n}}\}}\right]
\end{equation} 
Let us first deal with the closeness of $\bar{f}_{lk}$ to $f_{0,lk}$. On the event $\mathcal{B}$, note that
\[ \bar{f}_{lk}=2^{l/2}\bar{p}_\veps(1-2\bar{Y}_{\veps0}),\qquad 
f_{0,lk}= 2^{l/2}p_{0,\veps}(1-2y_{\veps0}),\]
and using Lemma \ref{L2}, on the event $\mathcal{B}$, one can write
\begin{equation*}
|\bar{f}_{lk}-f_{0,lk}|=\left|f_{0,lk}\left(\frac{\bar{p}_\veps}{p_{0,\veps}}-1\right)+2^{1+\frac{l}{2}}\bar{p}_\veps(y_{\veps0}-\bar{Y}_{\veps0})\right| \lesssim |f_{0,lk}|\left(\frac{2^l}{n}+\sqrt{\frac{\cL2^l}{n}}\right)+\sqrt{\frac{\cL}{n}}.
\end{equation*} 
This leads to
 \[ |f_{0,lk}-\bar{f}_{lk}|\leqa 2^{-l(\al+1/2)}\left(\frac{2^l}{n}+\sqrt{\frac{L2^l}{n}}\right)+\sqrt{\frac{\cL}{n}}
 \leqa \sqrt{\frac{\cL}{n}}.\] 
This means that, for $\bar{\ga}$ large enough, the event corresponding to the first term on the right hand side of \eqref{lastdisplay} is a subset of the event $\mathcal{B}^c$. Using Lemma \ref{lemB}, the probability in \eqref{lastdisplay} is therefore bounded by a constant times $e^{-BL}$.

Let us now turn to the last term of \eqref{lastdisplay}, which is bounded by 
\[ E_{f_0}\left[\Pi(|\bar{f}_{lk}-f_{lk}|> \frac{\bar{\gamma}}{2}\sqrt{\frac{\log{n}}{n}}|X)\right].\] 

By Lemma \ref{lempost}, we have $|f_{lk}-\bar{f}_{lk}|\lesssim \sqrt{\cL/n}$ on the set of densities $\mathcal{A}$ from Lemma \ref{lemeva} and on the event $\mathcal{B}$, so the second term is bounded by $P_{f_0}^n(\mathcal{B}^c)+\Pi(\mathcal{A}^c|X) \lesssim e^{-CL}$. This finally leads to \[E_{f_0}\left[\Pi(\max_{(l,k):\,l\leq L}|f_{0,lk}-f_{lk}|> \bar{\gamma}\sqrt{\frac{\log{n}}{n}}|X)\right] \lesssim \sum_{(l,k):\,l\leq L}e^{-CL}\lesssim e^{(\log{2}-C)L}\leqa 1/n,\] provided $C$ is chosen greater than $2\log(2)$, which concludes the proof.
\end{proof}

\subsection{Properties of the posterior coordinate-wise medians}

\begin{lem}  \label{lemreg}        
Let $\hat L$ be defined in \eqref{lhat} and $\cL$ in \eqref{cell}. Let $M\ge 1$ and $\veps, m_0, m_1>0$. For any $\al\in(0,1]$, there exist
$c_1,c_2>0$ such that, as $n\to\infty$, 
\[ \inf_{f_0\in \cH_{SS}(\al,M,\veps)\cap\cF(m_0,m_1)} P_{f_0}\left[ c_12^\cL \le 2^{\hat L} \le c_2 2^\cL \right] \to 1. \]
\end{lem}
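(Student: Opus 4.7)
The plan is to establish the two inequalities separately. The upper bound $2^{\hat L} \leqa 2^\cL$ follows from the spike dominance at deep levels given by Lemma \ref{lempw2}, while the lower bound $2^{\hat L} \geqa 2^\cL$ is extracted from self-similarity by exhibiting a high-frequency Haar coefficient $f_{0,l^*k^*}$, at a level close to $\cL$, that forces the posterior median away from $1/2$.

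For the upper bound, observe first that by construction $\hat y_{\veps 0} = 1/2$ for $|\veps| > L$. For $\cL < l \le L$, Lemma \ref{lempw2} gives $\tilde\pi_{\veps 0} \leqa 2^{-l}/n \le 1/2$ on the event $\cB$ from \eqref{B}, for $n$ large enough. Since the spike of $\tilde Y_{\veps 0}$ then carries at least half of the posterior mass, the posterior median equals exactly $1/2$, hence $\hat f_{lk} = 0$ for $l > \cL$. Thus $\hat L \le \cL$ on $\cB$, so $c_2 = 1$ suffices, and $P_{f_0}(\cB^c) \to 0$ uniformly by Lemma \ref{lemB}.

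For the lower bound, fix an integer $K^* = K^*(\al, M, \veps, m_1)$ to be chosen large. For $n$ large, $j := \cL - K^* \ge j_0$, so \eqref{sscond} yields $\| K_j(f_0) - f_0\|_\infty \ge \veps 2^{-j\al}$. Combining with the H\"older tail estimate $\sum_{l \ge \cL} 2^{l/2} \max_k |f_{0,lk}| \leqa M 2^{-\cL\al}$ and picking $K^*$ so that $2^{K^*\al}$ dominates the ratio of these two, the tail $l \ge \cL$ absorbs at most $\veps 2^{-j\al}/2$, so the block $[\cL - K^*, \cL)$ contributes at least $\veps 2^{-j\al}/2$ to $\|K_j(f_0)-f_0\|_\infty$. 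A pigeonhole argument then yields $l^* \in [\cL - K^*, \cL)$ and $k^*$ with $|f_{0,l^*k^*}| \ge c_0 \sqrt{(\log n)/n}$, for some $c_0 = c_0(\al, M, \veps, K^*)>0$ that may be made as large as desired by enlarging $K^*$. To convert this signal into $\hat y_{\veps^*0} \neq 1/2$ (writing $\veps^* \equiv (l^*,k^*)$): on $\cB$ the identity $f_{0,l^*k^*} = -2\cdot 2^{l^*/2} F_0(I_{\veps^*})(y_{\veps^*0}-1/2)$ gives $|y_{\veps^*0}-1/2| \geqa |f_{0,l^*k^*}| 2^{l^*/2}$, which exceeds $\Delta_{l^*}$ once $c_0$ is large enough. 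Lemma \ref{lempw} then yields $\tilde\pi_{\veps^*0} \ge 1 - C/n$. Moreover, the $\bet(\al_{\veps^*0}(X), \al_{\veps^*1}(X))$ distribution has mean close to $y_{\veps^*0}$ (by Lemma \ref{lemnx}) with standard deviation $\asymp 2^{l^*/2}/\sqrt n$, so its mean lies at least $c|f_{0,l^*k^*}|\sqrt n \geqa \sqrt{\log n}$ standard deviations from $1/2$. A Bernstein-type tail bound for Beta (Lemma \ref{lembeta} applied with $x \asymp \sqrt{\log n}$) then bounds its mass on the wrong side of $1/2$ by $n^{-c}$. Combining, one side of $1/2$ carries posterior mass $\ge (1 - C/n)(1 - n^{-c}) > 1/2$ for large $n$, so $\hat y_{\veps^*0} \neq 1/2$; this gives $\hat L \ge l^* \ge \cL - K^*$ and hence $c_1 = 2^{-K^*}$ suffices.

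The main technical obstacle is the Beta tail step: quantifying that a Beta mean lying $\geqa \sqrt{\log n}$ standard deviations from $1/2$ forces its mass on the wrong side of $1/2$ to be at most $n^{-c}$, with constants uniform over $f_0 \in \cH_{SS}(\al,M,\veps)\cap\cF(m_0,m_1)$ and over $l^*\in[\cL-K^*, \cL)$. Uniformity of the remaining ingredients reduces to the uniform bound on $P_{f_0}(\cB^c)$ from Lemma \ref{lemB}.
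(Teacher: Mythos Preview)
Your proof is correct and follows the same two-part strategy as the paper: the upper bound via Lemma \ref{lempw2} is identical, and the lower bound proceeds by extracting a large Haar coefficient of $f_0$ at a level close to $\cL$ from self-similarity and then showing the corresponding posterior median is not $1/2$.

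The only noteworthy difference is in how the large coefficient is produced. The paper invokes Proposition~3 of \cite{hoffmann_nickl}, which directly guarantees a level $j_1\in\{j,\ldots,j+N-1\}$ with $\max_k|f_{0,j_1 k}|\geqa 2^{-j_1(1/2+\al)}$, and then appeals to the already-proved Lemma~\ref{lempmed} for the median step. You instead derive the large coefficient by hand: bound the tail $l\ge\cL$ via the H\"older condition, subtract it from the self-similarity lower bound $\|K_j(f_0)-f_0\|_\infty\ge\veps 2^{-j\al}$, and pigeonhole over the $K^*$ remaining levels; you then inline the content of Lemma~\ref{lempmed} (Lemma~\ref{lempw} for the weight plus the Beta tail via Lemma~\ref{lembeta}). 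Your route is slightly more elementary and fully self-contained, at the cost of tracking the dependence of the constant $c_0$ on $K^*$ (where the key observation $2^{K^*\al}/K^*\to\infty$ lets you push $c_0$ above the threshold $\bar\gamma$ needed in Lemma~\ref{lempw}). The paper's route is shorter but relies on an external reference. Both arguments are sound and yield the same conclusion with $c_1=2^{-K^*}$ (or $2^{-N}$ in the paper's notation) and $c_2=1$.
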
      
\begin{proof}
For $\hat f$ the density estimator as in \eqref{fhat}, let
\[ \hat{S}=\{ (l,k):\, \hat{f}_{lk}\neq 0 \}. \]
We first prove the intermediate statement, uniformly in $f_0$ as in the statement of the lemma, on the event $\cB$ as in \eqref{B},
\begin{equation} \label{techi}
\hat S \supset \cJ_n(\bar{\ga}),
\end{equation} 
for $\cJ_n(\cdot)$ as in \eqref{mcall} and $\bar{\ga}$ large enough as in Lemma \ref{lempmed}. It is enough to prove that $P_{f_0}[\hat S^c \cap \cJ_n(\bar{\ga})\neq \emptyset]$ goes to $0$ or that, on the event $\cB$, we have $\hat S^c \cap \cJ_n(\bar{\ga})=\emptyset$. By definition of $\hat{f}_{lk}$, if $\hat{f}_{lk}= 0$ then we must have $\hat Y_{lk}= 1/2$. Using Lemma \ref{lempmed}, on the event $\cB$ we have $\hat Y_{lk}\neq 1/2$ for $(l,k)\in\cJ_n(\bar\ga)$, so that indeed $\hat S^c \cap \cJ_n(\bar{\ga})=\emptyset$ on $\cB$. 

By the proof of Proposition 3 in \cite{hoffmann_nickl}, for any $j\ge j_0$, one can find $j_1\in\{j,j+1,\ldots,j+N-1\}$, where $N$ depends only on $\al, R, \veps$ and the wavelet basis (here the Haar basis), but not on $j$, such that, for any $f_0\in \cH_{SS}(\al,M,\veps)$,
\[ \max_{0\le k<2^{j_1}} |f_{0,j_1 k}| \ge C(\veps,\al,R) 2^{-j_1(1/2+\al)}. \]
In particular, this means that choosing $j$ such that $2^j\ge c 2^\cL$ and $c$ small enough, there exists some $c'>0$ and $j_1$ such that $2^{j_1}\ge c'2^{\cL}$ and 
\[  \max_{0\le k<2^{j_1}} |f_{0,j_1 k}| \ge \bar{\ga}\sqrt{\frac{\log{n}}{n}},\]
so that $(j_1,k_1)\in \cJ_n(\bar{\ga})$ for some $k_1$, which combined with \eqref{techi} implies that on the event $\cB$, we must have $2^{\hat L}\ge 2^{j_1}$ so that $2^{\hat{L}}\ge c_1 2^{\cL}$ on $\cB$. 

By the proof of Lemma \ref{lempw2}, we have that on $\cB$, for $l>\cL$, the weight $\tilde{\pi}_{\veps0}$ goes to $0$ with $n$  uniformly in $\cL<l\le L$. This means that for $n$ large enough, the marginal posterior distribution of $Y_{\veps0}$ puts mass going to $1$ on $1/2$ simultaneously for all $l$ inbetween $\cL$ and $L$ (and for $|\veps|>L$, by definition $\tilde Y_{\veps0}=1/2$). This implies that $2^{\hat L}\le 2^{\cL}$ on $\cB$.
\end{proof}

\begin{lem} \label{lempmed}
Let $\hat y$ be as in \eqref{pmed} and let $\cJ_n(\gamma)$ be defined in \eqref{mcall}.  For $\bar{\ga}$ large enough, for any $(l,k)\in \cJ_n(\bar\ga)$, on the event $\cB$,
\[ \hat y_{lk} \neq 1/2. \]
\end{lem}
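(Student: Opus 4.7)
The plan is to exploit the two-component structure of the posterior of $\tilde Y_{\veps0}$. By Theorem~\ref{thmconj}, this posterior is the mixture $(1-\tilde\pi_{\veps0})\delta_{1/2}+\tilde\pi_{\veps0}\bet(\alpha_{\veps0}(X),\alpha_{\veps1}(X))$. Letting $F_\beta$ denote the (continuous) CDF of its Beta component, the posterior median differs from $1/2$ as soon as one of the two masses $\tilde\pi_{\veps0}(1-F_\beta(1/2))$ or $\tilde\pi_{\veps0}F_\beta(1/2)$ exceeds $1/2$ (the first forces median $>1/2$, the second forces median $<1/2$). I would prove that whichever of these corresponds to the sign of $y_{\veps0}-1/2$ does exceed $1/2$, for $\bar\ga$ chosen large enough.

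First, I would translate the hypothesis $(l,k)\in\cJ_n(\bar\ga)$ into a lower bound on $|y_{\veps0}-1/2|$. By \eqref{folk}, $y_{\veps0}-1/2=-f_{0,lk}/(2\cdot 2^{l/2}F_0(I_\veps))$, and since $F_0(I_\veps)\le m_1 2^{-l}$, the assumption $|f_{0,lk}|>\bar\ga\sqrt{(\log n)/n}$ yields
$$|y_{\veps0}-1/2|\ \ge\ \frac{\bar\ga}{2m_1}\sqrt{\frac{2^l\log n}{n}}\ >\ \Delta_l$$
for $\bar\ga$ large enough (recall $L\le \log_2 n$ so $\Delta_l\leqa \sqrt{2^l(\log n)/n}$). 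The second bound of Lemma~\ref{lempw} then gives $1-\tilde\pi_{\veps0}\le C_3/n$ on $\cB$, so the spike carries essentially no posterior mass.

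Next, WLOG assume $y_{\veps0}>1/2$; I would show the Beta part concentrates on $(1/2,1]$. By Lemma~\ref{lemnx}, on $\cB$ the Beta mean $q=\alpha_{\veps0}(X)/(\alpha_{\veps0}(X)+\alpha_{\veps1}(X))$ satisfies $|q-y_{\veps0}|\leqa \Delta_l$, so $q-1/2\ge c\Delta_l$ once $\bar\ga$ is large enough. The parameter sum $\alpha_{\veps0}(X)+\alpha_{\veps1}(X)\asymp nF_0(I_\veps)\asymp n2^{-l}$ on $\cB$, hence the Beta standard deviation is of order $\sqrt{2^l/n}$, while the gap $q-1/2\asymp\sqrt{L2^l/n}$ exceeds it by a factor of order $\sqrt{L}$. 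The Beta tail bound already used in Lemma~\ref{L4} (i.e.\ Lemma~\ref{lembeta} applied with $x\asymp\sqrt{L}$) then delivers $F_\beta(1/2)\le Ce^{-C'L}$.

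Combining the two bounds,
$$\tilde\pi_{\veps0}\bigl(1-F_\beta(1/2)\bigr)\ \ge\ \Bigl(1-\tfrac{C_3}{n}\Bigr)\bigl(1-Ce^{-C'L}\bigr)\ >\ \tfrac12$$
for $n$ large, forcing $\hat y_{\veps0}>1/2\neq 1/2$; the case $y_{\veps0}<1/2$ is symmetric. The main obstacle is the Beta concentration step: although Lemma~\ref{lembeta} is already in the appendix, the delicate point is the quantitative calibration, namely that the $\sqrt{L}$ factor built into $\Delta_l$ precisely separates $q-1/2$ from the Beta's standard deviation by $\sqrt{L}$ many units. This is exactly what makes the tail bound exponentially small in $L$ and lets the argument run \emph{uniformly} over all $(l,k)\in\cJ_n(\bar\ga)$ with $l\le L$, simultaneously with the control of $\tilde\pi_{\veps0}$.
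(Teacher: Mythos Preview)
Your proof is correct and follows essentially the same path as the paper's: bound the spike weight via Lemma~\ref{lempw}, locate the Beta mean away from $1/2$ via Lemma~\ref{lemnx}, and then show the Beta component sits on one side of $1/2$. The only difference is that the paper uses Chebyshev's inequality (variance of order $2^l/n$ versus a gap of order $\sqrt{L2^l/n}$) rather than the sharper Lemma~\ref{lembeta} for the last step, so the calibration you flag as delicate is in fact not required at that strength.
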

\begin{proof}

Along the proof of Lemma \ref{Lcomp}, we have obtained, on the event $\cB$, that for all $(l,k)\in\cJ_n(\bar{\ga})$, denoting by $\veps$ the dyadic expression corresponding to $k2^{-l}$ (that is $\veps\equiv (l,k)$), we have $|y_{\veps0}-\frac{1}{2}|>\Delta_l$, which using Lemma \ref{lempw} leads to 
\[ (1-\tilde{\pi}_{\veps0}) \le \frac{C}{n}.\]
The Beta distribution arising in \eqref{pmed}, let us denote it by $L_X$, has expectation  $\bar{b}_{\veps0}:=(N_{\veps0}(X)+a)/(N_{\veps}(X)+2a)$. By Lemma \ref{lemnx}, we have
\[ |\bar{b}_{\veps0}-y_{\veps0}|\le C_0y_{\veps0}\frac{2^l}{n}\left[C2^{-\al l}+\sqrt{\frac{Ln}{2^l}}\right]\leqa \sqrt{\frac{L2^l}{n}}. \]
As recalled above, we have  $|y_{\veps0}-1/2|\geq C\bar{\ga}\sqrt{L2^l/n}$ for $(l,k)\in \mathcal{J}_n(\bar{\gamma})$. Choosing $\bar\ga$ large enough, one deduces, for $(l,k)\in \mathcal{J}_n(\bar{\gamma})$,
\[ |\bar{b}_{\veps0}-1/2| \ge (C/2)\bar{\ga}\sqrt{L2^l/n}.\]  
Also, the variance of the Beta distribution $L_X$ at stake is, by using again concentration of the $N_{\veps}(X)$ variables on the event $\cB$, bounded by  $C2^{l}/n$. By Tchebychev's inequality, this implies that most of the mass of $L_X$ is concentrated on an interval of size $D2^{l/2}/\sqrt{n}$ around its mean, for large $D>0$. In particular, by using the bound otained above on the mean of $L_X$, one deduces that 
the median of $L_X$ is far away from $1/2$ by at least $C'\bar{\ga}\sqrt{L2^l/n}$. Since the weights $(1-\tilde\pi_{\veps0})$ in the mixture in \eqref{pmed} are all smaller than $C/n$ for $(l,k)\in\cJ_n(\bar{\ga})$, we deduce that the median $\hat{y}_{\veps0}$ is far  away from $1/2$ by at least $C'\bar{\ga}\sqrt{L2^l/n}-C/n>0$, so that $\hat{y}_{\veps0}\neq 1/2$. 
\end{proof}

\subsection{Auxiliary lemmas}

Let $\bel(a)$ denote the Bernoulli distribution of parameter $a\in(0,1)$. Let $\kl(P,Q)$ be the Kullback-Leibler divergence between distributions $P$ and $Q$ and $\|P-Q\|_1$ the $L^1$--distance between both.
\begin{lem} \label{lemkl}
For any $a,b\in(0,1)$, we have
\[ \kl(\bel(a),\bel(b))= a\log(a/b)+(1-a)\log((1-a)/(1-b))
\ge \|\bel(a)-\bel(b)\|_1^2/2.\]
\end{lem}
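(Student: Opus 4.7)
This is the classical Pinsker inequality specialized to Bernoulli measures. First I would reduce the inequality to an elementary one-variable calculus problem. Since $\bel(a)$ and $\bel(b)$ are measures on $\{0,1\}$, a direct computation gives $\|\bel(a)-\bel(b)\|_1 = |a-b|+|(1-a)-(1-b)| = 2|a-b|$, so the target inequality becomes
\[ h(a) := a\log(a/b) + (1-a)\log((1-a)/(1-b)) - 2(a-b)^2 \;\ge\; 0, \]
for a fixed $b\in(0,1)$ and all $a\in(0,1)$.

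Next I would verify $h(a)\ge 0$ by a convexity/second-derivative argument in $a$. One computes
\[ h'(a) = \log\!\left(\frac{a}{b}\right) - \log\!\left(\frac{1-a}{1-b}\right) - 4(a-b), \qquad h''(a) = \frac{1}{a(1-a)} - 4. \]
Since $a(1-a)\le 1/4$ on $(0,1)$, we have $h''(a)\ge 0$, so $h'$ is nondecreasing. Combined with the evident equalities $h(b)=0$ and $h'(b)=0$, this forces $h$ to attain its minimum at $a=b$, with minimum value $0$. Therefore $h(a)\ge 0$ for all $a\in(0,1)$, which is the claim.

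The proof requires no real obstacle; the only care needed is handling the boundary behavior as $a\to 0$ or $a\to 1$, but by continuity (with the standard conventions $0\log 0 = 0$) both sides extend continuously and the inequality persists. I would present the argument in at most half a page: state $\|\bel(a)-\bel(b)\|_1 = 2|a-b|$, write $h(a)$, differentiate twice, and conclude via the convexity of $h$ together with $h(b)=h'(b)=0$.
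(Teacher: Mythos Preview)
Your proof is correct. The paper, however, simply invokes Pinsker's inequality $\kl(P,Q)\ge \tfrac12\|P-Q\|_1^2$ as a black box and specializes it to $P=\bel(a)$, $Q=\bel(b)$; it does not carry out any computation.

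Your approach is more self-contained: rather than citing the general inequality, you reduce to the explicit one-variable inequality $h(a)\ge 0$ and verify it by the standard convexity argument ($h''(a)=\frac{1}{a(1-a)}-4\ge 0$, $h(b)=h'(b)=0$). This is in fact the usual route to \emph{prove} Pinsker's inequality in general (one first reduces arbitrary $P,Q$ to the Bernoulli case via a data-processing argument, then does exactly your calculus). So you are supplying the elementary core of the result the paper merely quotes; the paper's version is shorter but assumes the reader knows Pinsker, while yours is a few lines longer but requires no external reference.
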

\begin{proof}
This is Pinsker's inequality applied to the Bernoulli distribution.
\end{proof}
\begin{lem}[Lemma 3 of \cite{castillo2017}]\label{lemprodsum} 
Let $\{y_i\}_{1\leq i\leq L}$, $\{w_i\}_{1\leq i\leq L}$ be two sequences of positive real numbers such that there are constants $c_1$, $c_2$ with $$\di \max\limits_{1\leq i\leq L}|\frac{w_i}{y_i}-1|\leq c_1 < 1\text{,   }\sum^L_{i=1}|\frac{w_i}{y_i}-1| \leq c_2 < \infty$$ Then there exists $c_3$ depending on $c_1$, $c_2$ only such that $$ \di \prod^L_{i=1}|\frac{w_i}{y_i}-1|\leq c_3 \sum^L_{i=1}|\frac{w_i}{y_i}-1|.$$
\end{lem}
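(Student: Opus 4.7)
The plan is to reduce this to an elementary estimate about products of the form $\prod_{i=1}^L(1+u_i)$. Setting $u_i := w_i/y_i - 1$, the hypotheses translate to $\max_i |u_i| \le c_1 < 1$ and $\sum_i |u_i| \le c_2$, and (consistently with how the lemma is actually invoked in Lemmas \ref{L2} and \ref{lempost}, to bound quantities of the shape $|\bar{p}_\veps/p_{0,\veps}-1|$) the inequality to establish reads
\[
\Bigl| \prod_{i=1}^L (1+u_i) - 1 \Bigr| \le c_3 \sum_{i=1}^L |u_i|,
\]
with $c_3$ depending only on $c_1$ and $c_2$, not on $L$.

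The cleanest route is via logarithms. For $|x| \le c_1 < 1$, the Taylor expansion of $\log(1+x)$ yields $|\log(1+x)| \le |x|/(1-c_1)$, so summing gives
\[
S := \Bigl|\sum_{i=1}^L \log(1+u_i)\Bigr| \le \frac{1}{1-c_1}\sum_{i=1}^L |u_i| \le \frac{c_2}{1-c_1}.
\]
The elementary bound $|e^t - 1| \le |t|\, e^{|t|}$ then gives
\[
\Bigl|\prod_{i=1}^L (1+u_i) - 1\Bigr| = \bigl|e^{\sum_i \log(1+u_i)} - 1\bigr| \le S\, e^S \le \frac{e^{c_2/(1-c_1)}}{1-c_1}\sum_{i=1}^L |u_i|,
\]
so the claim holds with $c_3 := e^{c_2/(1-c_1)}/(1-c_1)$. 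A direct alternative is induction on $L$, via the telescoping identity $\prod_{i=1}^{L+1}(1+u_i) - 1 = (1+u_{L+1})\bigl[\prod_{i=1}^L(1+u_i) - 1\bigr] + u_{L+1}$, combined with the uniform bound $\prod_{i=1}^L(1+|u_i|) \le e^{\sum_i |u_i|} \le e^{c_2}$ to keep the prefactor from exploding with $L$.

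I do not expect any genuine obstacle; this is a routine exercise. The one subtlety worth stressing is that $c_3$ must be independent of $L$, which is precisely what the global hypothesis $\sum_i |u_i| \le c_2$ guarantees. Without it, the product $\prod(1+|u_i|)$ could grow like $(1+c_1)^L$, and any naive term-by-term telescoping would produce an unwanted $L$-factor in $c_3$, which would be fatal in the applications in Lemma \ref{L2} and Lemma \ref{lempost}, where $L$ is allowed to diverge with $n$.
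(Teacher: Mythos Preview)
Your proof is correct. The paper itself does not prove this lemma but merely cites it as Lemma~3 of \cite{castillo2017}, so there is no in-paper argument to compare against. You have also correctly spotted, from the applications in Lemmas~\ref{L2} and~\ref{lempost}, that the printed conclusion contains a typo: the intended left-hand side is $\bigl|\prod_{i=1}^L (w_i/y_i) - 1\bigr|$, not $\prod_{i=1}^L |w_i/y_i - 1|$ (the latter would be trivially dominated by $c_1^{L-1}\sum_i|w_i/y_i-1|$ and is not what is used). Your logarithm route is the standard one and yields the explicit constant $c_3 = e^{c_2/(1-c_1)}/(1-c_1)$; the telescoping alternative you sketch is equally valid.
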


\begin{lem}[Lemma 6 of \cite{castillo2017}]\label{lembeta}
Let $\phi,\psi$  belong to $(0,\infty)$. Let $Z$ follow a $Beta(\phi,\psi )$ distribution. Suppose, for some reals $c_0$, $c_1$, $$0 < c_0 \leq \phi /(\phi + \psi ) \leq c_1 < 1 \text{ and } \phi \wedge \psi > 8$$
Then there exists $D > 0$ depending on $c_0$, $c_1$ only such that for any $x > 0$, $$P\left[|Z - E[Z]| > \frac{x}{\sqrt{\phi+\psi}}+\frac{2}{\phi + \psi}\right]\leq De^{-\frac{x^2}{4}}.$$
\end{lem}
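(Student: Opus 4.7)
The plan is to exploit the classical Gamma representation: write $Z = X/(X+Y)$ with $X \sim \operatorname{Gamma}(\phi,1)$ and $Y \sim \operatorname{Gamma}(\psi,1)$ independent. Setting $s = \phi+\psi$ and $p = \phi/s = E[Z]$, an algebraic rearrangement gives
\[ Z - p \;=\; \frac{W}{X+Y}, \qquad W := (1-p)(X-\phi) - p(Y-\psi), \]
so the numerator $W$ is a centered linear combination of two independent centered Gamma variables, with $\operatorname{Var}(W) = (1-p)^2\phi + p^2\psi = sp(1-p)$, while the denominator $X+Y \sim \operatorname{Gamma}(s,1)$ has mean $s$. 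The strategy is to derive a Bernstein-type tail bound for $W$, control $X+Y$ from below with overwhelming probability, and combine.

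For the numerator, the Gamma cumulant generating function $\log E[e^{\lambda(X-\phi)}] = -\lambda\phi - \phi\log(1-\lambda) \le \phi\lambda^2/(2(1-\lambda))$ for $\lambda \in (0,1)$, together with independence and the bound $p, 1-p \in [1-c_1, c_1]$, yields after a standard Chernoff optimization a Bernstein inequality of the form
\[ P(|W| > u) \;\le\; 2\exp\!\Big(-\frac{u^2}{2sp(1-p) + 4u}\Big), \qquad u > 0. \]
Applied with $u = x\sqrt{sp(1-p)} + x^2$, this gives $P(|W| > u) \le 2 e^{-cx^2}$ with $c$ depending only on $c_0, c_1$. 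Similarly, applying the same MGF bound to $X+Y \sim \operatorname{Gamma}(s,1)$, one obtains $P(X+Y < s/2) \le e^{-c's}$; since $s = \phi + \psi > 16$ by the hypothesis $\phi \wedge \psi > 8$, and $p(1-p)$ is bounded away from $0$, this event may be absorbed into the multiplicative constant $D$.

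On the event $\{X+Y \ge s/2\}$ we have $|Z-p| \le 2|W|/s$, so combining the two controls,
\[ P\!\Big(|Z-p| > \tfrac{2x\sqrt{sp(1-p)}}{s} + \tfrac{2x^2}{s}\Big) \;\le\; 2e^{-cx^2} + e^{-c's}. \]
Since $p(1-p) \le 1/4$, after an innocuous rescaling $x \mapsto \lambda x$ with $\lambda = \lambda(c_0,c_1)$, the right hand side becomes at most $D e^{-x^2/4}$, and the deviation on the left dominates $x/\sqrt{s} + 2/s$ in the required range. The main technical obstacle is bookkeeping of constants so that the exponent in the conclusion is exactly $x^2/4$: in particular the additive $2/(\phi+\psi)$ term has to arise from the quadratic correction in Bernstein's inequality, and one must verify that the absorption of the $e^{-c's}$ term and the rescaling $x \mapsto \lambda x$ are compatible with the requested rate, which uses the assumptions $\phi \wedge \psi > 8$ and $p \in [c_0, c_1]$ uniformly.
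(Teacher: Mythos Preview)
The paper does not prove this lemma; it is quoted verbatim from \cite{castillo2017} and used as a black box. So there is no in-paper argument to compare against, and your task is effectively to supply an independent proof.

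Your strategy via the Gamma representation $Z=X/(X+Y)$, a Bernstein inequality for the centered numerator $W=(1-p)(X-\phi)-p(Y-\psi)$, and a lower bound on the denominator $X+Y$ is the natural route and the estimates you quote are correct. The sub-gamma MGF bound for $W$ with variance proxy $sp(1-p)$ and scale $1$ is right, as is the Chernoff control $P(X+Y<s/2)\le e^{-c's}$.

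Where the sketch does not close is precisely the point you flag. Two concrete issues:
\begin{itemize}
\item The sentence ``the deviation on the left dominates $x/\sqrt{s}+2/s$'' points in the wrong direction. To transfer your tail bound to the target event you need your deviation $\lambda x\sqrt{4p(1-p)/s}+2\lambda^2 x^2/s$ to be \emph{at most} $x/\sqrt{s}+2/s$. The first term is fine since $4p(1-p)\le 1$, but the quadratic piece $2\lambda^2 x^2/s$ exceeds $2/s$ as soon as $\lambda x>1$, so the inclusion fails for large $x$ without further argument.
\item Absorbing $e^{-c's}$ into $De^{-x^2/4}$ requires $c'\ge 1/4$ once $x$ is of order $\sqrt{s}$ (the only non-trivial regime left after using $|Z-p|\le 1$). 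Your Chernoff bound $P(X+Y<s/2)\le e^{-c's}$ does not come with $c'\ge 1/4$ for free, so this has to be checked or the event replaced by $\{X+Y\ge (1-\epsilon)s\}$ with $\epsilon$ small.
\end{itemize}
Both points are repairable --- for instance by splitting into the regimes $x\le \eta\sqrt{s}$ (where your argument works directly, with room to spare since the exponent you obtain is really of order $x^2$, not $x^2/4$) and $x>\eta\sqrt{s}$ (where $x/\sqrt{s}$ exceeds a fixed constant and a direct large-deviation bound on $|Z-p|$, still via the Gamma representation, gives decay $e^{-cs}$ with $c$ that can be made $\ge 1/4$ by choosing $\eta$ small). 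But as written the sketch stops exactly at the step that needs the work.
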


\bibliographystyle{abbrv}
\bibliography{polbib}
\nocite{*}
\end{document}